\title{A relation on ${(\omega,<)}$ of intermediate degree spectrum on a cone}
\author{Jad Damaj and Matthew Harrison-Trainor\thanks{Harrison-Trainor was partially supported by a Sloan Research Fellowship and by the National Science Foundation under Grant DMS-2153823 / DMS-2419591. This work began while Damaj was an REU student at the University of Michigan funded under this grant DMS-2153823 / DMS-2419591.}}
\newcommand{\vp}{\varphi}
\newcommand{\cB}{\mathcal{B}}
\newcommand{\cL}{\mathcal{L}}
\newcommand{\cA}{\mathcal{A}}
\newcommand{\cR}{\mathcal{R}}
\newcommand{\mc}[1]{\mathcal{#1}}
\newcommand{\Eres}{\upharpoonright\upharpoonright}
\newtheorem{theorem}{Theorem}[section]
\newtheorem{lemma}[theorem]{Lemma}
\newtheorem{conjecture}[theorem]{Conjecture}
\newtheorem{corollary}[theorem]{Corollary}
\newtheorem{claim}{Claim}[theorem]
\theoremstyle{definition}
\newtheorem{definition}[theorem]{Definition}
\newtheorem{example}[theorem]{Example}
\newtheorem{question}[theorem]{Question}
\DeclareMathOperator{\rank}{rank}
\DeclareMathOperator{\maxrank}{maxrank}
\DeclareMathOperator{\minrank}{minrank}
\theoremstyle{remark}
\newtheorem{remark}[theorem]{Remark}
\begin{document}
	
	\maketitle
	
	\begin{abstract}
		We examine the degree spectra of relations on ${(\omega, <)}$. Given an additional relation $R$ on ${(\omega,<)}$, such as the successor relation, the degree spectrum of $R$ is the set of Turing degrees of $R$ in computable copies of ${(\omega,<)}$. It is known that all degree spectra of relations on ${(\omega,<)}$ fall into one of four categories: the computable degree, all of the c.e.\ degrees, all of the $\Delta^0_2$ degrees, or intermediate between the c.e.\ degrees and the $\Delta^0_2$ degrees. Examples of the first three degree spectra are easy to construct and well-known, but until recently it was open whether there is a relation with intermediate degree spectrum on a cone. Bazhenov, Kaloci\'{n}ski, and Wroclawski constructed an example of an intermediate degree spectrum, but their example is unnatural in the sense that it is constructed by diagonalization and thus not canonical, that is, which relation you obtain from their construction depends on which G\"odel encoding (and hence order of enumeration) of the partial computable functions / programs you choose. In this paper, we use the ``on-a-cone'' paradigm to restrict our attention to ``natural'' relations $R$. Our main result is a construction of a natural relation on ${(\omega,<)}$ which has intermediate degree spectrum. This relation has intermediate degree spectrum because of structural reasons.
	\end{abstract}
	
	\section{Introduction}
	
	Let $\cA$ be a mathematical structure such as a group, graph, or linear order. For this paper, we will be solely interested in the case where $\cA$ is the linear order ${(\omega,<)}$ though the definitions can be made in general. Let $R$ be an additional relation on $\cA$ not in the signature of $\cA$. Typical examples are the relation of linear independence on a vector space or the successor relation on a linear order. 
	
	What is the intrinsic complexity of $R$? One way to measure this is to look at how the complexity of $R$ behaves under isomorphisms. In particular, we consider all computable copies $\cB$ of $\cA$ (isomorphic presentations of $\cA$ where all of the functions and relations are computable) and look at the Turing degree of $R$ in $\cB$. The collection of all such Turing degrees is the degree spectrum of $R$. In other words, the degree spectrum measures the possible complexity of $R$ while fixing the complexity of the presentation of the underlying structure.
	
	\begin{definition}
		Given a computable structure $\cA$ and a relation $R$ on $\cA$, we define the degree spectrum of $R$, $\text{DgSp}_{\cA}(R)$, to be the set of Turing degrees   \[ \{ \deg_T (\vp(R)) \colon  \text{$\cB$ is a computable copy of $\cA$ and $\varphi:\cA\cong\cB$}\}\]
		i.e., the images of $R$ in all computable copies of $\cA$ under all isomorphisms.
	\end{definition}
	
	In this paper we follow a series of papers \cite{DMKY,Knoll09,MW18, MHT18,BNW22,BazhenovDariusz,Bazhenov2024} studying the degree spectra of computable relations on the structure ${(\omega,<)}$. The successor relation $S$ plays a particularly important role in this structure. There is one copy of ${(\omega,<)}$, the standard copy, where $S$ is computable. In any other computable copy ${\cL} = (L,\prec)$ we have the successor relation $S^{{\cL}} \subseteq L^2$ on $\mc{A}$. $S^{{\cL}}$ is always a co-c.e.\ set, and hence of c.e.\ degree, and in fact the degree spectrum of $S$ is exactly the c.e.\ degrees. 
	
	In any computable copy $\cL$ there is a unique isomorphism $f_{{\cL}} \colon {\cL} \to {(\omega,<)}$, and the Turing degree of this isomorphism is exactly the Turing degree of $S^{{\cL}}$. Given any other computable relation $R$ on ${(\omega,<)}$, we obtain its image in ${\cL}$ by $R^{{\cL}} = f(R)$, and so ${\cR}^{{\cL}} \leq_T f_{{\cL}} =_T S^{{\cL}} \leq_T \varnothing'$. For many relations $R$, we also always have $R^{{\cL}} \geq_T S^{{\cL}}$, so that the degree spectrum of $R$ is the same as the degree spectrum of $S$, that is, all of the c.e.\ degrees. This is the case, for example, for the double-successor relation. Given $x < y$, we have that $y$ is the successor of $x$ if and only if there is $z > y$ such that $z$ is the double-successor of $x$. Thus the degree spectrum of the double-successor relation---a d.c.e.\ relation---is only the c.e.\ degrees, rather than the d.c.e.\ degrees as one might expect. (A more general version of this argument shows that the degree spectrum of any intrinsically $n$-c.e.\ relation on ${(\omega,<)}$ will be the c.e.\ degrees; see Section 4 of \cite{MHT18}.\footnote{It was incorrectly stated in \cite{MW18} that there are intrinsically $n$-c.e.\ relations on ${(\omega,<)}$ whose degree spectra are exactly the $n$-c.e.\ degrees.})
	
	There are also examples of computable relations on ${(\omega,<)}$ whose degree spectra are only the computable degree (such as an empty relation, or the identity function) and all $\Delta^0_2$ degrees (such as an infinite and co-infinite unary relation). Because ${(\omega,<)}$ is $0'$-categorical, every degree spectrum of a computable relation on ${(\omega,<)}$ is contained within the $\Delta^0_2$ degrees.
	
	Wright showed that for any computable relation $R$ on ${(\omega,<)}$, the degree spectrum is either just the computable degree, or must contain all of the c.e.\ degrees.\footnote{Downey, Khoussainov, Miller, and Yu \cite{DMKY} had previously shown this for unary relations.} Thus no degree spectrum could be intermediate between the computable degree and the c.e.\ degrees. Wright left open the question of which degree spectra intermediate between the c.e.\ degrees and the $\Delta^0_2$ degrees were possible, and in particular, he left open the question of the existence of a degree spectrum strictly intermediate between the c.e.\ degrees and the $\Delta^0_2$ degrees.
	
	In \cite{BNW22}, Bazhenov, Kaloci\'{n}ski, and Wroclawski showed that there is a unary function whose degree spectrum is intermediate. However this relation is unnatural in the sense that it is built via a complicated priority argument and diagonalization. It is not canonical because what relation one obtains by the construction depends on, e.g., the particular choice of G\"odel coding for c.e.\ sets. While computability theory is full of such unnatural counterexamples, such examples are unlikely to show up in the normal course of mathematics.
	
	In this paper we consider degree spectra of \textit{natural} relations on ${(\omega,<)}$. Of course what it means for a relation to be natural is not well-defined and so we use the ``on a cone'' formalism to capture this notion. This formalism originated with Martin's conjecture (see \cite{MontalbanMartin}) and there has been a recent program, first suggested by Montalb\'an, of studying computable structure theory on a cone (e.g., \cite{VaughtMontalban,CsimaHT,MHT18,AndrewsHTSchweber}). Degree spectra on a cone were first studied in the second author's monograph \cite{MHT18}, where relations on ${(\omega,<)}$ were specifically considered.\footnote{Though this monograph appeared in publication slightly before Wright's paper \cite{MW18}, a preprint of Wright's paper was available before the second author started working on \cite{MHT18}.} There the second author asked the on-a-cone version of Wright's question: Is there a relation on ${(\omega,<)}$ whose degree spectrum is intermediate on a cone? At the time an answer to Wright's original question, not on a cone, was still not known. While \cite{BNW22} resolved Wright's question, it did not resolve the on-a-cone version: the degree spectrum of their relation is the c.e.\ degrees on a cone. 
	
	In this paper we resolve the on-a-cone version of Wright's question:

    {
\renewcommand{\thetheorem}{\ref{ex:int-fun}}
\begin{theorem}
  There are relations on ${(\omega,<)}$ whose degree spectrum is strictly between the c.e.\ degrees and the $\Delta^0_2$ degrees on a cone.
\end{theorem}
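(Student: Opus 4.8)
The plan is to produce a single, explicitly defined relation $R$ on ${(\omega,<)}$ and to pin down its degree spectrum as a natural class $\cC$ of Turing degrees lying strictly between the c.e.\ degrees and the $\Delta^0_2$ degrees; for concreteness one expects $\cC$ to be the $\omega$-c.e.\ degrees (or, if the bookkeeping forces it, a neighbouring level of the Ershov hierarchy). The relation would be built from a block structure: $\omega$ is partitioned into consecutive intervals $I_0, I_1, I_2, \dots$ of computable, increasing lengths, and $R$ restricted to $I_n$ encodes a ``configuration,'' one of finitely many patterns, in such a way that (i) in the standard copy every block carries its default configuration, so $R$ is computable there; (ii) one configuration can be turned into another only by a \emph{monotone} rearrangement of a copy of ${(\omega,<)}$, i.e.\ by pushing reserved elements to later positions, never earlier --- the only kind of move available when building a copy of ${(\omega,<)}$; and (iii) for a copy $\cL$ built in this way, any $\cL$-computable approximation to $R^{\cL}$ will have to revise its guess about block $n$ at most $g(n)$ times, where $g$ is computable. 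The function $g$ must be \emph{unbounded}: by the remark in the introduction an intrinsically $n$-c.e.\ relation on ${(\omega,<)}$ has degree spectrum exactly the c.e.\ degrees, so if $g$ were bounded the spectrum would collapse; thus $R$ should be intrinsically $\omega$-c.e.\ but intrinsically $n$-c.e.\ for no $n$.

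First I would establish the upper bound. Fixing a computable copy $\cL$, one uses that the position of each element of $\cL$ is nondecreasing along the stages of its enumeration, together with the eventual stabilization of the local order around each block, to produce an approximation to $R^{\cL}$, computable uniformly from $\cL$, whose number of mind changes on the part of $R^{\cL}$ associated with block $n$ is at most $g(n)$. Hence $R^{\cL}$ is $\omega$-c.e., so every degree in the spectrum is an $\omega$-c.e.\ degree; since (by a classical fact about the Ershov hierarchy, which relativizes) not every $\Delta^0_2$ degree is $\omega$-c.e., the spectrum omits $\Delta^0_2$ degrees. The c.e.\ degrees, on the other hand, are all realized: given a c.e.\ set $W$, build a copy $\cL_W$ in which block $n$ is reconfigured --- exactly once, using only the ``one mind change'' of capacity that every block has --- precisely when $n$ enters $W$; a routine variant of the standard argument that $\mathrm{DgSp}(S)$ is the c.e.\ degrees should give $R^{\cL_W} \equiv_T W$.

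The crux is realizing a degree that is not c.e. Here one exploits the slack $g(n) \to \infty$: fix a set $X$ of non-c.e.\ Turing degree that admits an approximation with at most $g(n)$ mind changes on its $n$th bit --- for instance a d.c.e.\ set of non-c.e.\ degree (such exist by a classical result), which needs only two mind changes per bit, though the full capacity would allow coding more complex $\omega$-c.e.\ sets --- and build a copy $\cL$ that reconfigures block $n$ each time the approximation to $X$ changes on bit $n$, implementing every reconfiguration as a monotone rearrangement. The subtle points, and the main obstacle, are to make the coding faithful and invertible, so that $R^{\cL} \equiv_T X$ rather than merely $R^{\cL} \le_T X$: one must pre-reserve enough room inside each block of $\cL$ to absorb all $g(n)$ potential reconfigurations, and, more delicately, one must arrange that the coded bits can be read back off $R^{\cL}$ \emph{without} thereby reconstructing the fine order of $\cL$. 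This last requirement is forced by the structure of the problem: $S^{\cL}$ is always of c.e.\ degree and $R^{\cL} \le_T S^{\cL}$, so $R^{\cL}$ can have a non-c.e.\ degree only in copies where $R^{\cL} <_T S^{\cL}$, i.e.\ where $R$ genuinely fails to recover the successor function. Designing one relation that exposes enough of its block structure to encode every c.e.\ set and some properly $\omega$-c.e.\ set, yet exposes too little ever to reach an arbitrary $\Delta^0_2$ degree or, in the coding copies, the successor function itself, is where the real work lies.

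Finally, the ``on a cone'' conclusion should come essentially for free. The relation $R$ is defined canonically, with no reference to a G\"odel numbering; all of the constructions above are uniform; and the Ershov-hierarchy separation used for the upper bound relativizes. Hence for every oracle $Z$ the $Z$-relativized degree spectrum of $R$ is exactly the degrees that are $\omega$-c.e.\ relative to $Z$, which lies strictly between the $Z$-c.e.\ degrees and the $\Delta^0_2(Z)$ degrees. In particular the degree spectrum of $R$ is intermediate on a cone --- and, unlike the relation of \cite{BNW22}, whose intermediateness is destroyed by relativization, it is intermediate for structural reasons internal to $((\omega,<),R)$.
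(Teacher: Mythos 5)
Your overall instinct --- construct a canonical block-structured relation $R$ on ${(\omega,<)}$ and argue about its spectrum directly --- is exactly the strategy of the paper. But the specific shape you propose for the spectrum cannot be right, and the upper-bound argument that is supposed to deliver it has a real gap.

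The paper proves (Theorem~\ref{thm:no-list} and its Corollary) that if a block function $f$ has degree spectrum strictly containing the c.e.\ degrees on a cone, then for \emph{every} ordinal $\alpha$, that spectrum contains a degree that is not $\alpha$-c.e. So no block function --- and the relation you sketch is essentially one --- can have spectrum exactly the $\omega$-c.e.\ degrees, nor any level of the Ershov hierarchy, nor anything listable by a single $\Delta^0_2$-indexed family. The intermediate spectrum produced in the paper is genuinely not a familiar, uniformly-presentable class; for the paper's example $f$ it contains all $2$-c.e.\ degrees, fails to contain some $6$-c.e.\ degree, and still contains non-$\alpha$-c.e.\ degrees for arbitrary $\alpha$.

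Where your upper bound breaks: you claim a computable copy $\cL$ admits an approximation to $R^{\cL}$ with at most $g(n)$ mind changes ``on block $n$.'' But an $\omega$-c.e.\ bound has to be a computable function of the \emph{input} $k \in \cL$, and the adversary building $\cL$ can insert arbitrarily many new elements below a fixed $k$ before its position stabilizes; each such burst can force another revision of the guess for $R^{\cL}(k)$, and nothing in the data you hold computably bounds how many bursts occur. Knowing ``which block $k$ lands in'' is exactly the information being approximated, so it cannot be used to cap the mind changes in advance. A bound on the number of \emph{distinct values} $R^{\cL}(k)$ can take does not give a bound on the number of \emph{alternations} among them, which is what $\alpha$-c.e.-ness requires.

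What the paper does instead is avoid naming the spectrum. To show it is not all of $\Delta^0_2$, they introduce \emph{$f$-coding sequences} --- composable order-embeddings between $f$-closed intervals that alternately break and restore $f$ --- and prove (Theorem~\ref{thm:all-d2}) that the spectrum contains all $\Delta^0_2$ degrees if and only if an infinite such sequence exists. Conversely, to show the spectrum strictly exceeds the c.e.\ degrees they use the d-free criterion from \cite{MHT18} (Theorem~\ref{thm:d-free}/\ref{them:cond-for-c.e.}), reduced here to the simple condition that cofinitely many blocks embed into later blocks. Their explicit example $f$ (loops of prescribed lengths, arranged so every block recurs infinitely often but no two block types are adjacent twice) satisfies the first criterion while admitting no coding sequence of length $>5$. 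Neither direction ever identifies the spectrum as a known class. If you want to salvage your proposal, you should drop the Ershov-hierarchy target and instead prove directly that some $\Delta^0_2$ degree is omitted, e.g.\ by a finite-injury diagonalization in which each successful attack lengthens a putative coding sequence and the combinatorics of your block types forbid arbitrarily long ones.
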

\addtocounter{theorem}{-1}
}

	In particular, there are computable examples where the cone is the trivial cone, that is, computable relations $R$ on ${(\omega,<)}$ such that relative to any degree $\mathbf{d}$, the degree spectrum of $R$ relative to $\mathbf{d}$ is strictly between the $\mathbf{d}$-c.e.\ degrees and the degrees $\Delta^0_2$ relative to $\mathbf{d}$. Such examples are natural relations of intermediate degree spectrum. To illustrate how natural these examples are, we describe our example. As was the example in \cite{BNW22}, our example will be a unary function $f$. An initial segment of $f$ is as follows:
	\begin{center}
		\begin{tikzpicture}[scale=0.9,->]
			
			\node[circle,draw=black,fill=black,minimum width=.2cm, minimum height=.2cm, inner sep = 0pt] (box0) at (0,0) {};
			\node[circle,draw=black,fill=black,minimum width=.2cm, minimum height=.2cm, inner sep = 0pt] (box1) at (1,0) {};
			\node[circle,draw=black,fill=black,minimum width=.2cm, minimum height=.2cm, inner sep = 0pt] (box2) at (2,0) {};
			\node[circle,draw=black,fill=black,minimum width=.2cm, minimum height=.2cm, inner sep = 0pt] (box3) at (3,0) {};
			\node[circle,draw=black,fill=black,minimum width=.2cm, minimum height=.2cm, inner sep = 0pt] (box4) at (4,0) {};
			\node[circle,draw=black,fill=black,minimum width=.2cm, minimum height=.2cm, inner sep = 0pt] (box5) at (5,0) {};
			\node[circle,draw=black,fill=black,minimum width=.2cm, minimum height=.2cm, inner sep = 0pt] (box6) at (6,0) {};
			\node[circle,draw=black,fill=black,minimum width=.2cm, minimum height=.2cm, inner sep = 0pt] (box7) at (7,0) {};
			\node[circle,draw=black,fill=black,minimum width=.2cm, minimum height=.2cm, inner sep = 0pt] (box8) at (8,0) {};
			\node[circle,draw=black,fill=black,minimum width=.2cm, minimum height=.2cm, inner sep = 0pt] (box9) at (9,0) {};
			\node[circle,draw=black,fill=black,minimum width=.2cm, minimum height=.2cm, inner sep = 0pt] (box10) at (10,0) {};
			\node[circle,draw=black,fill=black,minimum width=.2cm, minimum height=.2cm, inner sep = 0pt] (box11) at (11,0) {};
			\node[circle,draw=black,fill=black,minimum width=.2cm, minimum height=.2cm, inner sep = 0pt] (box12) at (12,0) {};
			\node[circle,draw=black,fill=black,minimum width=.2cm, minimum height=.2cm, inner sep = 0pt] (box13) at (13,0) {};
			\node[circle,draw=black,fill=black,minimum width=.2cm, minimum height=.2cm, inner sep = 0pt] (box14) at (14,0) {};
			
			\node (box15) at (15,0) {$\cdots$};

			\path (box0) edge[->,loop above,min distance=10mm,in=55,out=125] node {{$$}} (box0);
			\path (box1) edge[->,loop above,min distance=10mm,in=55,out=125] node {{$$}} (box1);
			\path[bend left=45] (box2) edge[->] node {{$$}} (box3);
			\path[bend left=45] (box3) edge[->] node {{$$}} (box2);
			\path (box4) edge[->,loop above,min distance=10mm,in=55,out=125] node {{$$}} (box4);
			\path[bend left=45] (box5) edge[->] node {{$$}} (box6);
			\path[bend left=45] (box6) edge[->] node {{$$}} (box7);
			\path[bend left=45] (box7) edge[->] node {{$$}} (box5);
			\path[bend left=45] (box8) edge[->] node {{$$}} (box9);
			\path[bend left=45] (box9) edge[->] node {{$$}} (box8);
			\path[bend left=45] (box10) edge[->] node {{$$}} (box11);
			\path[bend left=45] (box11) edge[->] node {{$$}} (box12);
			\path[bend left=45] (box12) edge[->] node {{$$}} (box13);
			\path[bend left=45] (box13) edge[->] node {{$$}} (box10);
			\path (box14) edge[->,loop above,min distance=10mm,in=55,out=125] node {{$$}} (box14);

		\end{tikzpicture}
	\end{center}
	One can see that the domain of $f$ is divided up into blocks, and in each block $f$ is a loop. We write $L_n$ for the loop of length $n$, i.e., for the block
	\begin{center}
		\begin{tikzpicture}[->]
			
			\node[circle,draw=black,fill=black,minimum width=.2cm, minimum height=.2cm, inner sep = 0pt] (box0) at (0,0) {};
			\node[circle,draw=black,fill=black,minimum width=.2cm, minimum height=.2cm, inner sep = 0pt] (box1) at (1,0) {};
			\node[circle,draw=black,fill=black,minimum width=.2cm, minimum height=.2cm, inner sep = 0pt] (box2) at (2,0) {};
			\node[circle,draw=black,fill=black,minimum width=.2cm, minimum height=.2cm, inner sep = 0pt] (box3) at (3,0) {};
			\node[circle,draw=black,fill=black,minimum width=.2cm, minimum height=.2cm, inner sep = 0pt] (box4) at (4,0) {};
			\node[] (boxdots) at (5,0) {$\cdots$};
			\node[circle,draw=black,fill=black,minimum width=.2cm, minimum height=.2cm, inner sep = 0pt] (box5) at (6,0) {};
			\node[circle,draw=black,fill=black,minimum width=.2cm, minimum height=.2cm, inner sep = 0pt] (box6) at (7,0) {};

			\path[bend left=45] (box0) edge[->] node {{$$}} (box1);
			\path[bend left=45] (box1) edge[->] node {{$$}} (box2);
			\path[bend left=45] (box2) edge[->] node {{$$}} (box3);
			\path[bend left=45] (box3) edge[->] node {{$$}} (box4);
			\path[bend left=45] (box5) edge[->] node {{$$}} (box6);
			\path[bend left=20] (box6) edge[->] node {{$$}} (box0);
			
		\end{tikzpicture}
	\end{center}
	with $n$ nodes. Then $f$ consists of the following blocks:
	\[ L_1 L_1 L_2 L_1 L_3 L_2 L_4 L_1 L_5 L_2 L_6 L_3 L_7 L_1 L_8 \ldots.\]
	The pattern here is that the blocks in odd positions follow the pattern $L_1 L_2 L_3 L_4 \ldots$ enumerating the natural numbers in increasing order, while the blocks in even positions $L_1 L_1 L_2 L_1 L_2 L_3 \ldots$ are an enumeration of all of the natural numbers such that each number occurs infinitely many times. Thus every block appears infinitely many times, but any pair of blocks appears adjacent to each other at most once.
	
	While we can describe our example simply, the example of \cite{BNW22} does not have a simple description but is actually the result of a complicated priority construction. Moreover, what relation one gets from the priority construction depends on certain non-canonical choices that one makes, such as the choice of G\"odel encoding of the partial computable functions. Finally, their example does not relativize, i.e., the computable relation $R$ produced does not have intermediate degree spectrum relative to $0'$.
	
	When Montalb\'an first suggested studying computable structure theory on a cone, the hope was that one might find more structure in an area of mathematics that generally involved non-structure theorems. Though this is sometimes true (e.g., in \cite{CsimaHT}) it seems to not always be the case; e.g., we know that there are incomparable degree spectra on a cone \cite{MHT18}. The second half of this paper is dedicated to showing that even for relations on ${(\omega,<)}$, one of the simplest non-trivial examples, it seems hard to find structure in the degree spectra on a cone. One of the mains results here is the following theorem:

        {
\renewcommand{\thetheorem}{\ref{thm:example-alpha-even}}
\begin{theorem}
  Fix $\alpha \geq 6$ even. There is a unary function $f$ on ${(\omega,<)}$ whose degree spectrum on a cone contains all of the $\beta$-c.e.\ degrees for $\beta < \alpha$ and does not contain all of the $\alpha$-c.e.\ degrees.
\end{theorem}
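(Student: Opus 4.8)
The plan is to build $f=f_\alpha$ by elaborating the loop-function $f$ of the introduction. Recall its essential feature: the domain is cut into consecutive blocks $L_n$, and the sequence of lengths is arranged so that every length occurs infinitely often while no pair of lengths occupies two distinct adjacent positions; this ``rigidity'' is what lets one, from a bounded amount of local data in a computable copy $\cL$, locate a block in the global sequence and hence compute $f^{\cL}$ near it. To obtain the $\alpha$-dependence I would stratify this into a hierarchy of $\alpha/2$ nested \emph{scales}: certain blocks are declared level-$1$ landmarks, the gaps between consecutive level-$1$ landmarks are themselves marked (through their pattern of lengths) to carry level-$2$ landmarks, and so on up to level $\alpha/2$, with the lengths and markings chosen so that (a) $f_\alpha$ is computable and of order type $\omega$, (b) every finite configuration still recurs infinitely often, and (c) computing $f_\alpha^{\cL}(x)$ in a copy amounts, in the worst case, to resolving a chain of questions ``what is the nearest level-$k$ landmark to the left/right of $x$'', which alternate in logical complexity between $\Sigma^0_1$ and $\Pi^0_1$ relative to the computable order $\prec$ and the co-c.e.\ relation $S^{\cL}$, so that each scale is designed to contribute exactly two alternations and the whole chain is one $\alpha$-c.e.\ mind-change pattern. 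This is the point at which $\alpha$ even and $\alpha\ge 6$ enter: for odd $\alpha$ one would need a ``half scale'', and for $\alpha\in\{2,4\}$ the top scales degenerate, so the bound produced is not sharp.

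Next I would show the spectrum contains all $\beta$-c.e.\ degrees for $\beta<\alpha$ by a direct coding construction generalizing the c.e.-coding for the introduction's example. Given a $\beta$-c.e.\ set $A$ with its canonical approximation making at most $\beta$ changes at each argument, build a computable copy $\cL_A$ by enumerating the standard copy but holding back the block-boundary witnesses at designated coding sites; each change of the approximation is realized by inserting a single new element that shifts the apparent position of a landmark up one scale, and since $\beta<\alpha$ there is always a scale left unused. One then checks $f_\alpha^{\cL_A}\le_T A$ (from $A$ one recovers which insertions were made, hence the true block structure) and $A\le_T f_\alpha^{\cL_A}$ (reading off landmark positions at the coding sites recovers the approximation and its limit). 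Carrying this out uniformly above an oracle $\mathbf d$ gives the statement for the $\mathbf d$-$\beta$-c.e.\ degrees, which is what the ``on a cone'' conclusion requires; that $(\omega,<)$ is $0'$-categorical already confines every such spectrum to the degrees $\Delta^0_2$ relative to $\mathbf d$.

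The genuine obstacle is the negative half: producing an $\alpha$-c.e.\ degree \emph{not} in the spectrum. The strategy is to read off from the construction a structural ceiling --- a property $P$ of Turing degrees, provably possessed by $\deg_T(f_\alpha^{\cL})$ for every computable copy $\cL$ (on a cone), yet strictly weaker than ``$\alpha$-c.e.''. The natural candidate is that $\mathbf b$ contains an $\alpha$-c.e.\ set with an \emph{order-nested} approximation: there is a computable linear order of type $\omega$ such that a mind-change at argument $x$ is ever triggered only by the appearance of an element lying, in that order, between $x$ and one of finitely many landmarks, the scale hierarchy forcing the alternation at $x$ into a restricted pattern rather than a free $\alpha$-c.e.\ one. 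Since $f_\alpha^{\cL}$ itself witnesses $P$ for its own degree, $\mathrm{DgSp}(f_\alpha)$ lies inside $\{\mathbf b : P(\mathbf b)\}$, so it suffices to construct, by a finite-injury argument against all pairs $(\Phi_e,\Psi_e)$ of Turing functionals and all candidate order-nested $\alpha$-c.e.\ sets, an $\alpha$-c.e.\ set $B$ whose canonical approximation genuinely exercises all $\alpha$ alternations in a non-nested fashion, so that $\deg_T(B)$ fails $P$. The delicate points --- and where I expect the real work to lie --- are isolating $P$ so that it is broad enough to hold of every $f_\alpha^{\cL}$ while narrow enough to be violated by an $\alpha$-c.e.\ set (this needs exact bookkeeping of how many alternations each scale in a copy can force, hence the hypothesis $\alpha\ge 6$), and then making the diagonalization defeat $P$ without $B$ ever exceeding $\alpha$ mind-changes. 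A convenient reduction is to first prove a purely combinatorial lemma characterizing, in terms of mind-change patterns alone, which $\alpha$-c.e.\ approximations can arise as some $f_\alpha^{\cL}$; the negative half then reduces to diagonalizing against that class of patterns.
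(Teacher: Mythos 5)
Your plan does not reach a proof, and I think it has a structural gap that would require a different idea to repair.

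The central difficulty is that the theorem quantifies over \emph{all} even ordinals $\alpha\ge 6$, including infinite ones such as $\omega+2$ or $\omega^2$. Your proposal stratifies the block sequence into ``a hierarchy of $\alpha/2$ nested scales,'' which only makes sense when $\alpha$ is finite; for $\alpha=\omega+2$ there is no such linear hierarchy of scales, and no obvious way to interpret ``each scale contributes exactly two alternations'' so that the total is $\alpha$. The paper resolves exactly this by replacing the linear scale hierarchy with a \emph{well-founded tree} $T\subseteq\omega^{<\omega}$ of rank $\alpha$: it associates to each non-root $\sigma\in T$ a block type $B_\sigma$ containing a distinguished four-element ``sandwich'' whose $f$-structure (a small loop with one of two possible arrowheads, depending on the parity of $|\sigma|$) occurs only in blocks of the form $B_\tau$. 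This forces any sufficiently long $f$-coding sequence (the combinatorial gadget introduced in Section~3 and refined via the maximal/minimal coding trees $T_{\max}(f)$, $T_{\min}(f)$ in Theorems~\ref{thm:max-tree} and~\ref{thm:min-tree}) to trace out a pair of nested paths $\sigma_1\prec\sigma_2\prec\cdots$ and $\tau_1\prec\tau_2\prec\cdots$ in $T$, and the rank bookkeeping on the resulting auxiliary tree $T^*$ is what yields the upper bound $\rank(T_{\max}(f))\le\alpha+1$. Conversely, embedding $T$ directly into $T_{\min}(f)$ gives the lower bound. Your proposal bypasses this machinery entirely, so nothing plays the role of the sandwich elements or the rank computation on $T^*$, and it is precisely these that let the argument scale past finite $\alpha$ and tie the block combinatorics to an ordinal rank.

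The negative half of your proposal is also not a proof but a plan with the hard part left open: you state that you would isolate a structural property $P$ of degrees satisfied by every $f_\alpha^{\cL}$ but violated by some $\alpha$-c.e.\ set, propose ``order-nested approximation'' as a candidate without defining it, and acknowledge that verifying $P$ for all copies and diagonalizing against it are where ``the real work'' lies. The paper's corresponding step is concrete: one shows the maximal coding tree of $f$ is well-founded of bounded rank, and Theorem~\ref{thm:max-tree} then hands you the $\alpha$-c.e.\ set $C$ outside the spectrum via a finite-injury construction whose mind-change counter is literally the rank of a growing weak coding sequence in $T_{\max}(f)$. To make your sketch into a proof you would, at minimum, need to (i) give an $\alpha$-indexed family of block functions that makes sense for infinite ordinals, (ii) prove a rigidity lemma playing the role of the sandwich-element argument, and (iii) replace the undefined property $P$ with the explicit rank-counting in $T_{\max}(f)$ or an equivalent. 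As it stands, the parity observation (``for odd $\alpha$ one needs a half-scale'') is the right intuition, but the paper realizes it through the careful parity bookkeeping in the bound $\rank^*(T^*)\le\alpha$, not through anything in your outline.
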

\addtocounter{theorem}{-1}
}

\noindent Thus there are uncountably many different degree spectra on a cone. We also prove in \Cref{thm:no-list} that the degree spectra on a cone of these functions are not contained in the $\beta$-c.e.\ degrees for any $\beta$; that is, for any $\beta$, the degree spectrum on a cone contains a non-$\beta$-c.e.\ degree.
	
	\section{Preliminaries}
	
	\subsection{Cones and Martin's measure}
	
	Given a set  $A \subseteq 2^\omega$, we say that $A$ is \textit{degree-invariant} if whenever $X \in A$ and $Y \equiv_T X$, $Y \in A$. If $A$ is degree invariant, we can identify it with the corresponding set of Turing degrees $\{ \deg_T(X) \; \mid \; X \in A\}$.
	
	\begin{definition}
		Given $X \subseteq \omega$, the \textit{cone above $X$} is
		\[ C_X = \{ Y \; \mid \; Y \geq_T X \}.\]
	\end{definition}
	
	As a consequence of Martin's proof of Borel determinacy \cite{BorelDet}, one gets the following theorem.
	
	\begin{theorem}[Martin, \cite{MartinTuring}]\label{thm:cone}
		Every degree-invariant Borel subset of $2^\omega$ either contains a cone or is disjoint from a cone.
	\end{theorem}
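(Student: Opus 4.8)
The plan is to derive the statement directly from Martin's Borel determinacy theorem \cite{BorelDet} via the standard ``read off the opponent's moves'' coding trick. Fix a degree-invariant Borel set $A \subseteq 2^\omega$. Consider the length-$\omega$ two-player game in which the players alternately play bits $x_0, x_1, x_2, \ldots$, producing a real $X = x_0 x_1 x_2 \cdots \in 2^\omega$, and in which player~I wins the run exactly when $X \in A$. The winning condition is Borel, so by Borel determinacy one of the two players has a winning strategy.

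Suppose first that player~I has a winning strategy $\sigma$; I would show that $A$ contains the cone above $\deg_T(\sigma)$. Given any $Y \geq_T \sigma$, run $\sigma$ against the opponent who plays the successive bits of $Y$ in the odd positions, and let $X$ be the resulting play. On the one hand $X \leq_T Y$, since $\sigma \leq_T Y$ and the entire run of a strategy against a fixed sequence of opponent moves is computable from the strategy together with those moves. On the other hand $Y \leq_T X$, since $Y$ is recovered from $X$ by reading off the bits in the odd positions. Hence $X \equiv_T Y$. Because $\sigma$ is winning for player~I we have $X \in A$, and then degree-invariance of $A$ gives $Y \in A$. Thus $C_{\sigma} \subseteq A$.

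The case where player~II has a winning strategy $\tau$ is symmetric: given $Y \geq_T \tau$, let player~I play the bits of $Y$ in the even positions and let $X$ be the resulting run; the same argument shows $X \equiv_T Y$, and since $\tau$ is winning for player~II we get $X \notin A$, hence $Y \notin A$ by degree-invariance. Therefore $A$ is disjoint from the cone $C_{\tau}$. In either case $A$ contains a cone or is disjoint from a cone, as required.

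The only real content beyond invoking Borel determinacy is the coding observation that interleaving a real $Y$ with the moves dictated by a fixed strategy of Turing degree $\leq \deg_T(Y)$ yields a run Turing equivalent to $Y$; the main point to get right is that \emph{both} inequalities $X \leq_T Y$ and $Y \leq_T X$ hold, which is precisely where degree-invariance of $A$ is used to transfer membership from $X$ to $Y$. (The deep ingredient, Borel determinacy itself, is being taken as a cited black box.)
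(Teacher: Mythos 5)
Your proof is correct and is the standard derivation of Martin's cone theorem from Borel determinacy; the paper does not supply a proof of this result, merely citing it as a consequence of Martin's Borel determinacy theorem, which is precisely the argument you carry out. The coding observation — that interleaving $Y$ with the moves of a strategy computable from $Y$ yields a play Turing equivalent to $Y$, so that degree-invariance transfers membership — is exactly the right point to isolate.
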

	
	\noindent With more determinacy (e.g., analytic determinacy) this can be extended to more complicated degree-invariant sets. For this paper, Borel determinacy will be sufficient.
	
	Thinking of cones as large sets, one can define the $\{0,1\}$-valued Martin's measure on Borel degree-invariant sets by setting $\mu(A) = 1$ if $A$ contains a cone, and $\mu(A) = 0$ if $A$ is disjoint from a cone. Note that the intersection of countably many cones contains a cone, which makes Martin's measure countably additive.
	
	\subsection{Degree spectra on a cone}
	
        We begin by relativizing degree spectra to an oracle.
    
	\begin{definition}
		If $\cA$ is $X$-computable, we define the \textit{degree spectrum of $R$ relative to $X$}, $\text{DgSp}^X_{\cA}(R)$, to be the set of degrees 
		\[\text{DgSp}^X_{\cA}(R) = \{ \deg_T (\vp(R) \oplus X) \colon  \text{$\cB$ is an $X$-computable copy of $\cA$ and $\varphi:\cA\cong\cB$}\}.\]
	\end{definition}

        \noindent There is some discussion in Section 2.3 of \cite{MHT18} of why we use $\deg_T (\vp(R) \oplus X)$ rather than just $\deg_T (\vp(R))$include in this definition.
	
	Let $R$ and $S$ be relations on $\cA$ and $\cB$ respectively. The set of $Y$ such that $\text{DgSp}^Y_{\cA}(R) = \text{DgSp}^Y_{\cB}(S)$ is a degree-invariant Borel set,\footnote{To see this, one must use Scott's theorem \cite{Scott} that for a fixed structure $\cA$, determining whether $\cB \cong \cA$ is Borel.} and so by Theorem \ref{thm:cone}, either contains a cone or is disjoint from a cone. If it contains a cone, then we think of the degree spectra of $R$ and $S$ being equal for most degrees $Y$. Otherwise, there is a cone on which $\text{DgSp}^Y_{\cA}(R) \neq \text{DgSp}^Y_{\cB}(S)$ and we think of their degree spectra as being different for most degrees $Y$.
	
	\begin{definition}
		Let $R$ and $S$ be relations on $\cA$ and $\cB$ respectively. We say that the degree spectrum of $R$ is equal to the degree spectrum of $S$ on a cone if there is some $X$ such that for all $Y \geq_T X$, $\text{DgSp}^Y_{\cA}(R) = \text{DgSp}^Y_{\cB}(S)$. The \textit{degree spectrum of $R$ on a cone} is the equivalence class for $R$ modulo this equivalence relation.
	\end{definition}
	
	We can also define what it means for the degree spectrum of $R$ to be contained in the degree spectrum of $S$ on a cone: if there is some $X$ such that for all $Y \geq_T X$, $\text{DgSp}^Y_{\cA}(R) \subseteq \text{DgSp}^Y_{\cB}(S)$. Sometimes we also want to talk about the contents of the degree spectrum of $R$ on a cone without reference to some other relation, as in the following definition.
	
	\begin{definition}
		Let $R$ be a relation on $\cA$. We say that the degree spectrum of $R$ is equal to the c.e. degrees on a cone if there is some $X$ such that for all $Y \geq_T X$, $\text{DgSp}^Y_{\cA}(R)$ is the set of $Y$-c.e.\ degrees above $Y$. Similarly, we can define what it means for a degree spectrum to be equal to the computable degree or the $\Delta_2^0$ degrees on a cone. 
	\end{definition}

    We also sometimes use the following terminology: $R$ is \textit{intrinsically computable on a cone} if the degree spectrum of $R$ is the computable degree on a cone, and $R$ is \textit{intrinsically of c.e.\ degree on a cone} if the degree spectrum of $R$ is contained within the c.e.\ degrees on a cone. These are just the on-a-cone version of being intrinsically computable or intrinsically of c.e.\ degree.
	
    Note that throughout this paper, the relations and structures need not be computable. This is because for any structure and relation there is a cone on which that structure and relation are computable. Thus we may essentially behave as if all structures and relations are computable.
	
	\subsection{Notation and Definitions}
	
	Recall that Wright \cite{MW18} showed that given a computable relation $R$ on ${(\omega,<)}$, either the degree spectrum of $R$ is the computable degree $\{\mathbf{0}\}$ or the degree spectrum of $R$ contains all of the c.e.\ degrees. For specific types of relations, more is known. For example, Wright showed that if $R$ is unary, then the degree spectrum is either the computable degree $\{\mathbf{0}\}$ or all the $\Delta^0_2$ degrees.
	
	In the process of showing that there is a relation of intermediate degree spectrum, Bazhenov, Kaloci\'{n}ski, and Wroclawski \cite{BNW22} introduced the following class of relatively simple unary functions on ${(\omega,<)}$.
	
	\begin{definition}
		We say that a function $f: {(\omega,<)} \to {(\omega,<)}$ is a \textit{block function} if for each $n$ there is some interval $[a, b]$, containing $n$, that is closed under $f$ and $f^{-1}$. We call the minimal such interval the $f$-block, or simply the block if $f$ is understood, containing $n$. Distinct $f$-blocks cannot have any overlap, and so the domain of $f$ can be divided into blocks.
	\end{definition}
	
	\noindent Note that the function need not be a cycle on a block. For example, the following forms a block:
    \begin{center}
		\begin{tikzpicture}[scale=0.9,->]
			\node[circle,draw=black,fill=black,minimum width=.2cm, minimum height=.2cm, inner sep = 0pt] (box0) at (0,0) {};
			\node[circle,draw=black,fill=black,minimum width=.2cm, minimum height=.2cm, inner sep = 0pt] (box1) at (1,0) {};
			\node[circle,draw=black,fill=black,minimum width=.2cm, minimum height=.2cm, inner sep = 0pt] (box2) at (2,0) {};
			\node[circle,draw=black,fill=black,minimum width=.2cm, minimum height=.2cm, inner sep = 0pt] (box3) at (3,0) {};
            
            \path[bend left=45] (box0) edge[->] node {{$$}} (box2);
			\path[bend left=45] (box1) edge[->] node {{$$}} (box3);
			\path[bend left=45] (box3) edge[->] node {{$$}} (box1);
            \path[bend left=45] (box2) edge[->] node {{$$}} (box0);
					
		\end{tikzpicture}
	\end{center}
    The example constructed by Bazhenov, Kaloci\'{n}ski, and Wroclawski, as well as the examples that we construct in this paper, will be block functions. 
	
	Each block is finite, so we can list out all of the possible isomorphism types of blocks. We call these isomorphism types the \textit{block types} or simply \textit{types}. We say that $f$ \textit{realises} or \textit{has} a block type if that block type is the type of an $f$-block. We denote the $n$th block type in this enumeration by $I_n$ and will use this to provide an alternate way to represent an arbitrary block function.
	
	\begin{definition}
		Given a block function $f$, we define a sequence $\alpha_f$ of natural numbers, called the string corresponding to $f$, by $\alpha_f(i) = n$ where $I_n$ is the type of the $i$th block that occurs in $f$. 
	\end{definition}
	
	\noindent The string $\alpha_f$ completely determines $f$.
	
	We note that the example of Bazhenov, Kaloci\'{n}ski, and Wroclawski of an intermediate degree spectrum relies on not only the string $\alpha_f$ corresponding to $f$ being non-computable, but the \textit{counting function} $c_f (n) = \#\{i : \alpha_f (i) = n\}$ being non-computable. In our examples, both $\alpha_f$ and $c_f$ will be computable together with any other reasonable information that one might ask for.
	
	Given two $f$-blocks, we say that one embeds into the other if there is an order-preserving and $f$-preserving map from the one block to the other. Given block types $I$ and $J$, we say that $I$ embeds into $J$ if there is an order-preserving and $f$-preserving map from any block of type $I$ to any block of type $J$.
	
	\medskip
	
	Frequently in this paper we will build a computable linear order $(\mc{A},<_\mc{A}) \cong {(\omega,<)}$ (possibly relative to some oracle) by stages. At each stage, we will define a finite linear order $\mc{A}_s$ whose domain is a subset of $\omega$. At each stage $s+1$, we may add new elements to $\mc{A}_s$ to obtain $\mc{A}_{s+1}$, saying where they are in relation to all of the elements of $\mc{A}_s$, so that $\mc{A}_{0} \subseteq \mc{A}_{1} \subseteq \mc{A}_{2} \subseteq \cdots$. We build $\mc{A} = \bigcup_{s} \mc{A}_{s}$.
	
	For clarity, we will always denote by $A = \{a_0,a_1,a_2,\ldots\}$ the domain of $\mc{A}$ to distinguish it from the particular isomorphism type ${(\omega,<)}$. At each stage $s$, we have a guess at an isomorphism $\mc{A} \to {(\omega,<)}$: $0$ corresponds to the first element of $\mc{A}_s$, $1$ to the second element, and so on. We write $\pi_s \colon \mc{A}_s \to {(\omega,<)}$ for the guess at stage $s$ at this partial isomorphism: Given $a \in \mc{A}_s$, we write $\pi_s(a)$ for the corresponding element of ${(\omega,<)}$, that is, $\pi_s(a) = n$ where $n$ is the number of elements less than $a$ in $\mc{A}_s$.\footnote{Distinguishing between elements $a$ of $\mc{A}$ and the elements $\pi_s(a)$ they correspond to in ${(\omega,<)}$ at each stage $s$, a correspondence which will change between stages, will be the primary notational difficulty in this paper and we apologize to the reader for how hard it is to write down any notation for this in an easily readable form. We suggest that if the reader is confused at any point in reading this paper, they should begin by double-checking which entities are in $(\omega,<)$, and which are in the other copy $\mc{A} \cong (\omega,<)$ being considered.}  For a tuple $\bar{a} = (a_1,\ldots,a_n)$, we write $\pi_s(\bar{a})$ for the corresponding tuple $(\pi_s(a_1),\ldots,\pi_s(a_n))$. We also write $\pi$ for the isomorphism $\pi \colon \mc{A} \to {(\omega,<)}$, and so write $\pi(a) = n$ for the corresponding element of ${(\omega,<)}$ at the end of the construction. Think of $\pi_s(a)$ as a stage-by-stage approximation of the final value $\pi(a)$. At each stage $s+1$, new elements may be added to $\mc{A}$. If they are added above $a \in \mc{A}_s$, then $\pi_{s+1}(a) = \pi_s(a)$. But if they are added below $a \in \mc{A}_s$, then $\pi_{s+1}(a) > \pi_s(a)$.
	
	
	While in some constructions we will build a copy $\mc{A}$ of ${(\omega,<)}$ as described above, at other times we will be diagonalizing against all computable copies of ${(\omega,<)}$. Let $(\mc{A}_e)_{e \in \omega}$ be a list of the computable orderings, or more precisely the (possibly partial) $<$-structures $\mc{A}_e$ computed by the $e$th partial computable function. Generally if $\mc{A}_e$ is not a linear order we recognize this at some finite stage and so can ignore it, and since we are only concerned with linear orders isomorphic to ${(\omega,<)}$ we can effectively assume that $\mc{A}_e$ is infinite. So, morally speaking, we can think of each $\mc{A}_e$ as being an infinite computable linear order. At each stage $s$, we have computed finite linear order $\mc{A}_{e,s}$. We can make definitions, like $\pi_{e,s}(a)$, similar to the definitions above. We will usually omit the $e$, writing $\pi_s(a)$ when the structure $\mc{A}_e$ is understood. Note that if $\mc{A}_e$ is not actually isomorphic to ${(\omega,<)}$ then it is possible that $\lim_{s \to \infty} \pi_s(a) = \infty$ as infinitely many elements are added to $\mc{A}_s$ below $a$; but if $\mc{A}_e$ really is isomorphic to ${(\omega,<)}$ then $\lim_{s \to \infty} \pi_s(a)$ will exist and be the isomorphic image of $a$ in ${(\omega,<)}$.
	
	Given $\bar{a} = (a_1,\ldots,a_\ell)$ and $\bar{b} = (b_1,\ldots,b_\ell)$ in ${(\omega,<)}$, we say that $\bar{a}$ \textit{extends to} $\bar{b}$ if $b_1 \geq a_1$ and, for each $i$, $b_{i+1} - b_i \geq a_{i+1} - a_i$. Then $\bar{a}$ extends to $\bar{b}$ if and only if when $\bar{u} \in \mc{A}_s$, and $\pi_s(\bar{u}) = \bar{a}$, it is possible to add elements to $\mc{A}_s$ to produce $\mc{A}_{s+1}$ with $\pi_{s+1}(\bar{u}) = \bar{b}$. Also, given a tuple $\bar{a} = (a_1,\ldots,a_\ell) \in {(\omega,<)}$ and $n \in \mathbb{Z}$ we write $\bar{a} + n$ for $(a_1 + n,\ldots,a_\ell+n)$.
	
	If $f$ is a unary function on ${(\omega,<)}$, then we write $f^{\mc{A}_s}$ for the guess at stage $s$ at the values of $f$ on the elements of $\mc{A}_s$. We define $f^{\mc{A}_s}(a) = b$ if and only if $f(\pi_s(a)) = \pi_s(b)$. Note that $f^{\mc{A}_s}$ might not be a total function, as there might not be enough elements in $\mc{A}_s$. However if $f$ is a block function, then there are initial segments of ${(\omega,<)}$ which are closed under $f$, and so by making some small adjustments we can assume that $f^{\mc{A}_s} \colon \mc{A}_s \to \mc{A}_s$ is total.
	
	
	
	\section{Intermediate Degree Spectrum}
	
	In this section we will construct an example of a unary function whose degree spectrum on a cone is intermediate between the c.e.\ degrees and the $\Delta^0_2$ degrees. First, we will give a general characterization for when the degree spectrum on a cone of a block function strictly contains the c.e.\ degrees. This is the easiest part, and is essentially adapting Section 3 of \cite{MHT18} to the setting of block functions. Second, we will give a general condition for when the degree spectrum of a block function is all $\Delta^0_2$ degrees. This is where most of the work is done. Finally, we give an example and prove that it meets the first condition but not the second condition and thus strictly contains the c.e.\ degrees and is strictly contained within the $\Delta^0_2$ degrees.
	
	\subsection{Containing a non-c.e.\ degree}
	
	We take from \cite{MHT18} a sufficient condition for the degree spectrum of a relation $R$ on an arbitrary structure to strictly contain the c.e.\ degrees on a cone. Properly re-interpreted in the specific case of a block function on ${(\omega,<)}$, this will give us a condition which is both necessary and sufficient.
	
	\begin{definition}
		Let $R$ be an additional relation on a structure $\mc{A}$. We say that $\bar{a}$ is difference-free (or d-free) over $\bar{c}$ if for any tuple $\bar{b}$ and quantifier-free formula $\varphi(\bar{c},\bar{u},\bar{v})$ true of $\bar{a},\bar{b}$ there are $\bar{a}',\bar{b}'$ satisfying $\varphi(\bar{c},\bar{u},\bar{v})$ such that (1) $R$ restricted to $\bar{c},\bar{a}'$ is not the same as $R$ restricted to $\bar{c},\bar{a}$ and (2) for any existential formula $\psi(\bar{c},\bar{u},\bar{v})$ true of $\bar{a}',\bar{b}'$, there are $\bar{a}''$ and $\bar{b}''$ satisfying $\psi(\bar{c},\bar{u},\bar{v})$ and such that $R$ restricted to $\bar{c},\bar{a}'',\bar{b}''$ is the same as $R$ restricted to $\bar{c},\bar{a},\bar{b}$.\footnote{Note that in \cite{MHT18}, the formula $\varphi$ in the definition was allowed to be existential. The definition we give is equivalent and simpler.}
	\end{definition}

    This is a freeness notion in the style of those appearing in, e.g., Chapters 11 and 16 of \cite{AshKnight} in the proofs of theorems of Ash and Nerode \cite{AshNerode} and Barker \cite{Barker}. The definition is somewhat technical, but we will attempt to give some explanation. The best way to think of d-freeness is in terms of constructing a computable copy $\mc{B} \cong \mc{A}$ while having in mind a $\Delta^0_2$ isomorphism $\mc{B} \to \mc{A}$. Since we are constructing $\mc{B}$ computably, while we can change the isomorphism $\mc{B} \to \mc{A}$, we cannot change our mind about which atomic facts are true of elements of $\mc{B}$. Along the way, we will consider how $R^{\mc{B}}$ changes. Suppose that at some point, we have elements $\bar{x},\bar{y}$ of $\mc{B}$ which our isomorphism currently maps to elements $\bar{c},\bar{a}$ with $\bar{a}$ being d-free over $\bar{c}$. One should think of $\bar{u} \mapsto \bar{c}$ as being unchangeable due to higher-priority requirements. Now at some point later in the construction, we might like to cause $R^{\mc{B}}$ to change on $\bar{x},\bar{y}$. Since $\mc{B}$ must be computable, we have at this point committed to certain facts about $\mc{B}$, say a quantifier-free $\varphi(\bar{x},\bar{y},\bar{z})$ where $\bar{z}$ are further elements that we have added to $\mc{B}$ at this point, with the isomorphism mapping $\bar{z} \mapsto \bar{b}$. Note that $\varphi(\bar{c},\bar{a},\bar{b})$ is true in $\mc{A}$. Thus we can find $\bar{a}',\bar{b}'$ as above. Since $\varphi(\bar{c},\bar{a}',\bar{b}')$ is true, we can change our isomorphism to map $\bar{x},\bar{y},\bar{z} \mapsto \bar{c},\bar{a}',\bar{b}'$ as anything that we have committed to about $\bar{x},\bar{y},\bar{z}$ remains true. On the other hand, $R^{\mc{B}}$ has now changed on $\bar{x},\bar{y}$. Then, at some later stage, we might want to undo this change to $R^{\mc{B}}$. At this later point, we may have committed to more facts about $\mc{B}$, say an existential formula $\psi(\bar{x},\bar{y},\bar{z})$ with the new elements we have added to $\mc{B}$ being the witnesses to the existential quantifiers. Then we can find $\bar{a}'',\bar{b}''$ such that $\psi(\bar{c},\bar{a}'',\bar{b}'')$ is true, so that we can change our isomorphism to map $\bar{x},\bar{y},\bar{z} \mapsto \bar{c},\bar{a}'',\bar{b}''$, and furthermore $R^{\mc{B}}$ is the same on $\bar{x},\bar{y},\bar{z}$ as it was before we made any change. This idea, used in a priority argument, is the core of the proof of the following fact. 
	
	\begin{theorem}[Proposition 3.4 of \cite{MHT18}]\label{thm:d-free}
		Let $\cA$ be a structure and $R$ a relation on $\cA$. Suppose that for each tuple $\bar{c}$ there is $\bar{a}$ which is d-free over $\bar{c}$. Then the degree spectrum of $R$ strictly contains the c.e.\ degrees  on a cone.
	\end{theorem}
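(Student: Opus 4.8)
The plan is to build, from an arbitrary c.e. set $W$ (above the cone base), a computable copy $\mc{B}$ of $\cA$ in which the image of $R$ computes $W$ but is not c.e.; since $W$ was an arbitrary c.e. degree, this shows every c.e. degree is in the spectrum on a cone, while the non-c.e. instance we produce witnesses strict containment. Actually, the cleanest route is the standard one: first note that $R$ is always $\Delta^0_2$ in any copy (true here since $\cA$ is $0'$-categorical-ish in the relevant sense, but in general one argues the spectrum contains \emph{some} c.e. degree and then uses d-freeness to push past it), so the content is (i) every c.e. degree appears, and (ii) some $\Delta^0_2$ non-c.e.\ degree appears. For (ii), given a $\Delta^0_2$ set $X$ with a computable approximation $X_s$, I would construct $\mc{B}$ by stages, maintaining a list of ``coding locations'' — finitely many tuples $\bar{a}^{(n)}$, each built to be d-free over the parameters $\bar{c}$ chosen so far — and use the approximation to $X(n)$ to decide, at each stage, which of the two alternatives (changing $R$ on $\bar{a}^{(n)}$, via clause (1) of d-freeness, versus preserving it) we currently commit to. The quantifier-free formula $\varphi$ in the definition is exactly what lets us perform the switch while keeping $\mc{B}$ a copy of $\cA$; clause (2) — the existential-formula condition — is what guarantees that after a switch we can always \emph{later} recover the original $R$-pattern on an extension, so the approximation can oscillate as $X_s(n)$ does without getting stuck.

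The key steps, in order: (1) Relativize everything to a cone base $Z$ computing $R$, $\cA$, and the relevant $\bar{c}$-to-$\bar{a}$ assignment from the hypothesis; below I suppress $Z$. (2) Fix a uniformly chosen sequence of d-free tuples: by hypothesis, for each finite parameter tuple $\bar{c}$ there is $\bar{a}$ d-free over $\bar{c}$, and by iterating we get $\bar{a}^{(0)}, \bar{a}^{(1)}, \ldots$ with each $\bar{a}^{(n+1)}$ d-free over $\bar{c}\,\bar{a}^{(0)}\cdots\bar{a}^{(n)}$. (3) Given a $\Delta^0_2$ set $X$ (chosen non-c.e., e.g. a promptly simple set's complement or any non-c.e.\ $\Delta^0_2$ set), with approximation $X_s$, build $\mc{B}$ and the isomorphism $\pi\colon \cA \to \mc{B}$ by stages: at stage $s$, for each $n \le s$, if $X_s(n)$ has just changed, use clause (1) (respectively the ``preserve'' option) of d-freeness for $\bar{a}^{(n)}$ to adjust the finite piece of $\mc{B}$ built so far — the quantifier-free formula $\varphi$ records the isomorphism type of the current finite approximation, so the new $\bar{a}', \bar{b}'$ give a consistent extension. (4) Clause (2) of d-freeness ensures that whenever we are in the ``switched'' state and $X$ changes back, there is an extension restoring the original $R$-restriction on $\bar{a}^{(n)}$, so the construction never injures itself permanently. (5) In the limit, $\mc{B} \cong \cA$ is computable in the oracle, $X \le_T R^{\mc{B}}$ because $X(n) = \lim_s [\text{current state of } \bar{a}^{(n)}]$ can be read off $R^{\mc{B}}$ once we know $\pi(\bar{a}^{(n)})$, and crucially $R^{\mc{B}}$ is \emph{not} c.e.: a c.e.\ enumeration of $R^{\mc{B}}$ would let us compute a $\Sigma^0_1$-approximation to $X$ on the coding locations, contradicting $X \notin \Sigma^0_1$ — this is where clause (2), governing existential (i.e.\ $\Sigma^0_1$) formulas, does exactly the work needed to defeat any would-be c.e.\ enumeration.

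For the ``every c.e. degree appears'' half, the argument is easier and is the block-function / linear-order analogue of the corresponding fact for $S$: given a c.e.\ set $W$, build a copy of $\cA$ coding $W$ into the spacing of blocks so that $R^{\mc{B}} \equiv_T W$; since $R$'s image in any copy is always $\le_T S^{\cL}$ and hence of c.e.\ degree after the appropriate relativization, and any c.e.\ degree is realized this way, combining with (ii) gives that the spectrum on a cone strictly contains the c.e.\ degrees and is (being contained in the $\Delta^0_2$ degrees) strictly between them — wait, strict containment \emph{within} $\Delta^0_2$ is not asserted by this theorem, only strict containment \emph{of} the c.e.\ degrees, so that half needs nothing further here.

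The main obstacle I anticipate is the bookkeeping in step (3)–(4): one must verify that the various switches at the coding locations $\bar{a}^{(n)}$ for different $n$ do not interfere, which is why d-freeness is stated with a parameter $\bar{c}$ that we grow to include all previously-used locations — each new location is d-free \emph{over} the old ones, so a later switch at $\bar{a}^{(n+1)}$ can be carried out by a $\varphi$ and $\psi$ that mention $\bar{a}^{(0)},\ldots,\bar{a}^{(n)}$ as parameters and hence automatically respects the commitments already made there. Making the finite extensions genuinely build a \emph{computable} copy (not merely a $\Delta^0_2$ one) requires that at each stage only finitely much is decided and nothing is ever retracted in $\mc{B}$ itself — only the \emph{guess} $\pi_s$ at the isomorphism moves — which is consistent with the ``building by stages'' framework set up in the Preliminaries. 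I expect the rest to be routine.
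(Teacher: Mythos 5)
The paper does not actually prove Theorem~\ref{thm:d-free}: it cites it as Proposition~3.4 of \cite{MHT18} and moves on. So there is no in-paper proof to compare your write-up against; I can only assess the proposal on its own terms, and it has a genuine gap.

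Your central move is to fix a non-c.e.\ $\Delta^0_2$ set $X$ with approximation $X_s$, and to build $\mc{B}$ so that the coding location $\bar{a}^{(n)}$ tracks $X_s(n)$, switching $R$ each time $X_s(n)$ flips, relying on d-freeness to let ``the approximation oscillate as $X_s(n)$ does without getting stuck.'' This is where the argument fails. D-freeness buys you exactly one switch (clause~(1)) followed by one restore (clause~(2)): after the restore produces $\bar{a}''$, nothing guarantees that $\bar{a}''$ is again d-free over $\bar{c}$, so you cannot switch a second time at the same location. Arbitrary oscillation at a single coding tuple is precisely the content of an infinite $f$-coding sequence (Definition~3.6 and Theorem~\ref{thm:all-d2}), which is a \emph{strictly stronger} hypothesis than d-freeness. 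Indeed, if your argument were correct it would show that d-freeness implies the degree spectrum is all $\Delta^0_2$ degrees on a cone, but Theorem~\ref{ex:int-fun} gives a block function for which d-freeness holds (via Theorem~\ref{them:cond-for-c.e.}) and yet the spectrum is strictly contained in the $\Delta^0_2$ degrees. So the coding approach proves too much and cannot be salvaged.

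The right shape of the argument is a diagonalization, not a coding. Build a copy $\mc{B}$ (relative to the cone base) meeting, for each triple of a c.e.\ set $W_e$ and Turing functionals $\Phi_i,\Psi_j$, the requirement that $\Phi_i^{W_e}\neq R^{\mc{B}}$ or $\Psi_j^{R^{\mc{B}}}\neq W_e$. For each requirement: choose $\bar{a}$ d-free over the parameters $\bar{c}$ already committed; wait for a stage $s_0$ with double agreement, say $\Phi_i^{W_{e,s_0}}(\bar{a}) = R^{\mc{B}_{s_0}}(\bar{a})$ with use $u_0$ and $\Psi_j^{R^{\mc{B}_{s_0}}}[0,\ldots,u_0] = W_{e,s_0}[0,\ldots,u_0]$ with use $v_0$; then switch via clause~(1), taking $\bar{b}$ to be all of $\mc{B}_{s_0}$ and $\varphi$ its atomic diagram. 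If $\Phi_i^{W_e}(\bar{a})$ never recovers to the new value of $R^{\mc{B}}(\bar{a})$ we win outright. If it does recover at some stage $s_1$, then since $W_e$ is c.e.\ and the use was $u_0$, some element must have entered $W_e$ below $u_0$ between $s_0$ and $s_1$. Now restore via clause~(2), taking $\psi$ to be the existential diagram of $\mc{B}_{s_1}$ over $\bar{c}\bar{a}'\bar{b}'$; this reproduces the original $R$-values on $\bar{c}\bar{a}\bar{b}$, hence restores the computation $\Psi_j^{R^{\mc{B}}}[0,\ldots,u_0]$, which now outputs $W_{e,s_0}[0,\ldots,u_0] \neq W_e[0,\ldots,u_0]$ permanently (modulo finite injury). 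This is genuinely a one-shot, switch-then-restore argument; the c.e.-ness of $W_e$ is exactly what prevents the need for a third move. Combined with the standard fact that a relation witnessed not to be relatively intrinsically computable (which d-freeness over $\varnothing$ gives) has degree spectrum containing all c.e.\ degrees on a cone, this yields strict containment.

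Two smaller points. First, ``the complement of a promptly simple set'' is a co-c.e.\ set, hence of c.e.\ \emph{degree}; even if your coding worked it would not produce a non-c.e.\ degree, and for the coding approach you would need $X$ of non-c.e.\ \emph{degree}, plus a verification that $R^{\mc{B}}\leq_T X$ as well, neither of which your sketch addresses. Second, your appeal to $R^{\mc{B}}\leq_T S^{\cL}$ and block spacing is specific to ${(\omega,<)}$, whereas Theorem~\ref{thm:d-free} is stated for arbitrary structures $\cA$; those observations cannot be used in the general proof.
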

	
	We do not know whether this condition is also necessary. However if we interpret this for a block function on ${(\omega,<)}$, we get the following condition which is simpler as well as both necessary and sufficient.
	
	\begin{theorem} \label{them:cond-for-c.e.}
		Let $f$ be a block function which is not intrinsically computable on a cone. Then $f$ is intrinsically of c.e. degree on a cone if and only if there are infinitely many blocks that do not embed into a later block. 
	\end{theorem}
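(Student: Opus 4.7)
The plan is to prove both directions, relying on the observation that a ``good'' block (one not embedding into any later block) must have a block type that occurs only finitely often in $\alpha_f$: if the type of $B_j$ reappears at some position $k > j$, then $B_j$ embeds into $B_k$ via the identity, contradicting goodness of $B_j$. Hence in any copy $\mc{A}$ the good blocks are identifiable ``landmarks.''

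For the $(\Leftarrow)$ direction, assume there are infinitely many good blocks $B_{g_0}, B_{g_1}, \ldots$ with types $I_{m_0}, I_{m_1}, \ldots$. I will fix an oracle $X$ computing $f$, $\alpha_f$, and the sequence $(g_i)$, and work on the cone above $X$. In any $Y$-computable copy $\mc{A}$ with $Y \geq_T X$, I will show $f^{\mc{A}} \oplus Y \equiv_T S^{\mc{A}} \oplus Y$; since $S^{\mc{A}}$ has $Y$-c.e.\ degree, the conclusion follows. The forward inequality is standard. For the reverse, I will identify the landmark $\sigma_i$, the starter of $B^{\mc{A}}_{g_i}$ in $\mc{A}$, as the $<_{\mc{A}}$-last starter of a block of type $I_{m_i}$ in $\mc{A}$; since there are only finitely many such blocks (as $I_{m_i}$ occurs only finitely often in $\alpha_f$), the set of landmarks can be enumerated and filtered using $f^{\mc{A}} \oplus Y$. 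Once all $\sigma_i$ are available, $\alpha_f$ completely determines the block structure of $\mc{A}$ between consecutive landmarks, and $S^{\mc{A}}$ is reconstructed by counting within $<_{\mc{A}}$.

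For the $(\Rightarrow)$ direction, assume only finitely many blocks fail to embed into later ones. Past a finite initial segment, every block embeds into infinitely many later blocks by iterating embeddings. I will apply Theorem~\ref{thm:d-free} by constructing, for each tuple $\bar{c}$, a d-free tuple $\bar{a}$. Take $\bar{a}$ within a block $B$ past both $\bar{c}$ and the finitely many non-embedding blocks. For any $\bar{b}$ and quantifier-free $\varphi(\bar{c},\bar{u},\bar{v})$ true of $(\bar{a},\bar{b})$, I will obtain $(\bar{a}', \bar{b}')$ by shifting $\bar{a},\bar{b}$ into a later copy of $B$ embedded inside a differently-typed block, producing a different $f$-restriction on $(\bar{c}, \bar{a}')$. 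Condition (2) of d-free will then be verified by using yet a further embedding of $B$ into a still-later block in a way that both witnesses any prescribed existential $\psi$ and restores the original $f$-pattern on the full tuple.

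The hard part will be in the $(\Leftarrow)$ direction: making the landmark identification algorithmic. Recognizing a block of type $I_{m_i}$ from $f^{\mc{A}}$ involves computing a connected component under $f^{\mc{A}}$, including preimages, which a priori requires $\Pi^0_1$ closure checks. The resolution will use the bounded multiplicity of each good type in $\alpha_f$, together with $<_{\mc{A}}$ and the finiteness of each block, to confine the searches to a bounded region of $\mc{A}$ and ensure termination.
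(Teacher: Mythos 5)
Your $(\Leftarrow)$ argument follows the same basic idea as the paper's, and your observation that a ``good'' block's type can occur only finitely often in $\alpha_f$ (since a later occurrence of the same type would give an embedding via the identity) is correct and is exactly what bounds the search region. The paper's version is simpler, though: for each $k$, it locates in $\mc{A}$ a \emph{single} tuple $a_0 <_{\mc A} \cdots <_{\mc A} a_{\ell+m-1}$ whose last $m$ elements carry the $f^{\mc{A}}$-pattern of a good block $B$ lying past position $k$; since that block type embeds into no later block and an $f$-preserving embedding of a block must land inside a single block, the images of $a_\ell,\ldots,a_{\ell+m-1}$ are forced to be exactly $B$, so $a_k$ is the $k$th element. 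This avoids the need to identify the $<_{\mc{A}}$-\emph{last} occurrence or to verify blockness, which is the part you flag as delicate; you do not need all landmarks simultaneously, just one per $k$.

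Your $(\Rightarrow)$ direction has a genuine gap in the verification of condition (1) of d-freeness. You take $\bar{a}$ inside a block $B$ and obtain $\bar{a}'$ by mapping it to the embedded copy of $B$ inside a later, differently-typed block $B'$. But ``embeds'' means there is an order- and $f$-preserving map, so $f$ restricted to the positions of $\bar{a}'$ is \emph{identical} to $f$ restricted to $\bar{a}$; condition (1), which demands the restrictions differ, therefore fails. (And since $\bar{c}$ lies in earlier blocks, $f$ never connects $\bar{c}$ to $\bar{a}$ or $\bar{a}'$, so no difference can arise there either.) The paper's construction is genuinely different: it takes $\bar{a}$ to be an entire block of size greater than one (here non-intrinsic-computability is used to guarantee such blocks exist) and sets $\bar{a}' = \bar{a}+1$, so that $\bar{a}'$ straddles the boundary between two blocks and the $f$-pattern really changes --- for example, the position that $f$ previously sent back to the start of the block now either maps off the tuple or has a different image. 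The further re-embedding into still-later blocks is then used only to satisfy condition (2). As written, your shift preserves the $f$-pattern and cannot satisfy (1).
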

	
	\begin{proof}
		First, suppose that there are only finitely many blocks that do not embed into a later block. After some finite initial segment, all blocks that occur embed into some later block. By non-uniformly fixing this initial segment, we can assume that \textit{every} block embeds into another block. We show that for any tuple $\overline{c}$, there is some tuple $\overline{a}$ which is d-free over $\overline{c}$. Given $\overline{c}$, let $\overline{a}$ be some $f$-block of size greater than one such that all its elements are greater than those of $\overline{c}$. Since we assumed that $f$ was not intrinsically computable, it cannot be the identity almost everywhere and so there must be infinitely many blocks of size greater than one. We claim that $\overline{a}$ is d-free over $\overline{c}$.
		
		Now, with $\overline{c}$, $\overline{a}$ as above, suppose there is some quantifier-free formula $\vp(\overline{x}, \overline{u}, \overline{v})$ and tuple $\overline{b}$ such that $\vp(\overline{c}, \overline{a}, \overline{b})$ is true. We make some simplifying assumptions. First, we may assume that there is no repetition amongst the elements of all of the tuples. Second, by including in $\bar{c}$ any elements of $\bar{b}$ which are less than the elements of $\bar{a}$, we may assume that all of the elements of $\bar{b}$ are greater than all of the elements of $\bar{a}$. And third, we can assume that $\bar{b}$ consists of $n$ distinct adjacent blocks $\overline{b} = \overline{b}_{1}\overline{b}_{2} \cdots \overline{b}_{n}$ with $\overline{b}_1$ the first block after $\overline{a}$.
		
		Now define $\overline{a}' = \overline{a} + 1$ and $\overline{b}' = \overline{b} + 1$. Then $\overline{c} \, \overline{a}' \overline{b}'$ has the same order type as $\bar{c},\bar{a},\bar{b}$ and so still satisfies $\vp(\overline{x}, \overline{u}, \overline{v})$. However the values of $f$ on $\overline{a}$ are not the same as $f$ on $\overline{a}'$ as $\overline{a}$ is a block and $\overline{a}'$ is not. Furthermore, suppose $\psi(\overline{x}, \overline{u}, \overline{v})$ is some existential formula satisfied by $\overline{c}$, $\overline{a}'$, $\overline{b}'$. We can again replace this by a quantifier-free formula $\chi(\overline{x}, \overline{u}, \overline{v}, \overline{w})$ which is satisfied by $\overline{c}$, $\overline{a}'$, $\overline{b}', \overline{e}'$ for some tuple $\overline{e}'$. Noting that there are no gaps between the elements of $\bar{a}',\bar{b}'$, we may split $\bar{e}'$ into $\bar{e}_1'\bar{e}_2'$ where the elements of $\bar{e}_1'$ are less than the elements of $\bar{a}'$ and the elements of $\bar{e}_2'$ are greater than the elements of $\bar{a}'\bar{b}'$.
		
		Now let $\overline{a}''$ be the image of $\bar{a}$ in some block into which it embeds, and similarly let $\bar{b}_1'',\ldots,\bar{b}_n''$ be the images of $\bar{b}_1,\ldots,\bar{b}_n$ in blocks into which they embed, choosing these images sufficiently large that $\bar{c} < \bar{e}_1' < \bar{a}'' < \bar{b}_1'' < \bar{b}_2'' < \cdots < \bar{b}_n''$. Finally, choose $\bar{e}_2''$ larger than all of these. Let $\bar{e}'' = \bar{e}_1' \bar{e}_2''$.
		
		Our construction has ensured that $\overline{c} \, \overline{a}'' \overline{b}'' \overline{e}''$ has the same order type as $\overline{c} \, \overline{a}' \overline{b}' \overline{e}'$ and so $\overline{c} \, \overline{a}'' \overline{b}''$ satisfies the formula $\psi(\overline{x}, \overline{u}, \overline{v})$. Moreover, the values of $f$ on $\overline{c} \, \overline{a}'' \overline{b}''$ are the same as the values on $\overline{c} \, \overline{a} \, \overline{b}$. Thus, $\overline{a}$ is d-free over $\overline{c}$, as desired. By Theorem \ref{thm:d-free}, the degree spectrum of $f$ strictly contains the c.e.\ degrees  on a cone.
		
		\medskip
		
		In the other direction, suppose that there are infinitely many $f$-blocks which embed into no larger $f$-blocks. We work on the cone on which we can compute $\alpha_f$ and whether a given block embeds into a later block. We will show that, for any $X$ on this cone, if $f^{\cA}$ is the copy of $f$ in an $X$-computable copy $(\cA, <_{\cA})$ of ${(\omega,<)}$ then $f^{\cA}$ is of $X$-c.e. degree. This implies that the degree spectrum of $R$ on a cone is contained in the c.e.\ degrees. To simplify notation, we will suppress $X$, assuming that $\alpha_f$ is computable and that we can compute whether a given block embeds into any later block.
		
		To show that $f^{\mc{A}}$ is of c.e.\ degree, we will show that for any $k$ it computes the $k$th element of $\mc{A}$. This implies that it computes the successor relation on $\mc{A}$. Recall that the successor relation on $\mc{A}$ is always of c.e.\ degree and computes $f^{\mc{A}}$, so this will imply that $f^{\mc{A}}$ is of c.e.\ degree.
		
		Given $k$, look in ${(\omega,<)}$ for a block,  all of whose elements are greater than $k$,  which does not embed into any greater block. Suppose that this block is the elements $(\ell,\ldots,\ell+m-1)$ of $\omega$, so that there are $\ell$ elements less than this block. Let the block type be $I$. In $\mc{A}$, look for a tuple of elements $a_0 < a_1 < \cdots < a_\ell < \cdots < a_{\ell+m-1}$ of $\mc{A}$ such that $f^{\mc{A}}$ on $(a_\ell,\ldots,a_{\ell+m-1})$ has block type $I$. (This means that $f$ on $(\ell,\ldots,\ell+m-1)$ is the same as $f^{\mc{A}}$ on $(a_\ell,\ldots,a_{\ell+m-1})$.) Once we find such elements, we know that $a_k$ is the $k$th element of $\mc{A}$. This is because block type $I$ does not embed into any later $f^{\mc{A}}$-block (and noting that a block cannot embed into a sequence of blocks without embedding into one of them).
	\end{proof}
	
	\subsection{\texorpdfstring{Not all $\Delta^0_2$ degrees}{Not all Delta02 degrees}}
	
	In this section we give conditions to determine whether the degree spectrum of a block function on a cone consists of all $\Delta^0_2$ degrees or whether the degree spectrum is properly contained in the $\Delta^0_2$ degrees. The key is whether or not an infinite $f$-coding sequence, defined as follows, exists.
	
	\begin{definition}
		Given a block function $f$ on ${(\omega,<)}$, we say that a sequence of intervals $[a_1, b_1], [a_2, b_2], \ldots$ and a collection of maps  $\vp_i \colon [a_i,b_i] \to [a_{i+1},b_{i+1}]$ form an $f$-coding sequence if they satisfy the following conditions 
		\begin{enumerate}
			\item Each interval completely contains all $f$-blocks it intersects, and hence is closed under $f$ and $f^{-1}$.
			\item $\vp_i: [a_i, b_i] \to [a_{i+1}, b_{i+1}]$ are non-decreasing embeddings which preserve the ordering.
			\item $\vp_{i+1} \circ \vp_i \colon [a_i, b_i] \to [a_{i+2},b_{i+2}]$ preserves $f$.
			\item $\vp_1: [a_1, b_1] \to [a_2, b_2]$ does not preserve $f$, i.e., there is some $x \in [a_1,b_1]$ such that $\vp_1(f(x)) \neq f(\vp_1(x))$. Together with the previous property this implies that none of the $\vp_i$ preserves $f$.
			\item $a_{i+1} > b_i$ so that all of the elements of each interval are greater than the elements of the previous interval.
		\end{enumerate}
		For $i < j$ we write $\vp_{i \mapsto j}$ for $\vp_{j-1} \circ \cdots \circ \vp_i \colon [a_i,b_i] \to [a_j,b_j]$. If $i$ and $j$ have the same parity, then $f_{i \mapsto j}$ preserves $f$, and otherwise it does not. Also, the $\vp_{i \mapsto j}$ compose nicely, i.e., $\vp_{j \mapsto k} \circ \vp_{i \mapsto j} = \vp_{i \mapsto k}$.
		\[ \xymatrix{ [a_1,b_1]\ar[dr]_{\varphi_1}&& [a_3,b_3]\ar[dr]_{\varphi_{3}} && [a_5,b_5]\ar[dr]_{\varphi_{5}}&\cdots& \\
			&[a_2,b_2]\ar[ur]_{\varphi_{2}}&&[a_4,b_4]\ar[ur]_{\varphi_{4}}&&[a_6,b_6]&\cdots}.\]
        Note that a coding sequences consists of intervals in the standard copy $(\omega,<)$, not in some other $\mc{A} \cong (\omega,<)$.
	\end{definition}
	
	This definition can be motivated as follows. Given a $\Delta_2^0$ set $X$, we want to produce a computable copy $\mc{A}$ of ${(\omega,<)}$ such that $f^{\mc{A}} \equiv_T X$. For each $e$ we want to introduce some elements of $\bar{u}_e$ of $\mc{A}$ coding whether $e \in X$. We will build $\mc{A}$ by stages, with a finite linear order $\mc{A}_s$ at each stage, and $\mc{A}_0 \subseteq \mc{A}_1 \subseteq \mc{A}_2 \subseteq \cdots$ with $\mc{A}$ as the union. At each stage $s$, we guess that $n$th element of $\mc{A}_s$ is really the $n$th element of $\mc{A}$ (hence, the isomorphic image of $n-1 \in {(\omega,<)}$). Thus at stage $s$ we have a guess at the values of $f^{\mc{A}}$ on these elements. When $e$ enters or leaves $X$ at a stage, we must insert elements into $\mc{A}$ below and/or between the elements of $\bar{u}_e$ to change the guess at the values of $f^{\mc{A}}$ on these elements. If, say, $e$ left $X$, so we added new elements to $\mc{A}$ to change the guess at the values of $f^{\mc{A}}$ on $\bar{u}_e$, and then later $e$ re-enters $X$, we must again add new elements to $\mc{A}$ so that the values of $f^{\mc{A}}$ on $\bar{u}_e$ look the same as they did when previously we had $e \in X$. It is important that we cannot just make the values of $f^{\mc{A}}$ on $\bar{u}_e$ different every time $e$ enters or leaves $X$, because then we might be able to compute information about how many times $e$ entered or left $X$.
	
	An $f$-coding sequence is set up exactly to perform such a construction. Given a single $e$, say $e \notin X$ at this stage, we begin with $\bar{u}_e$ being $[a_1,b_1]$. Then, when $e$ enters $X$, we add elements to $\mc{A}$ so that $\bar{u}_e$ becomes $\vp_1([a_1,b_1]) \subseteq [a_2,b_2]$. When $e$ later leaves $X$ we add elements to $\mc{A}$ so that $\bar{u}_e$ becomes $\vp_2(\vp_1([a_1,b_1])) = \varphi_{1 \mapsto 3}([a_1,b_1]) \subseteq [a_3,b_3]$. We continue in this way until $e$ settles. Of course there is much more to deal with, such as all of the other values of $e$, and we have not exactly explained why $X \geq_T f^{\mc{A}}$ as there are many other elements of $\mc{A}$. But this is the basic idea.
	
	Only one infinite coding sequence is necessary to code a $\Delta_2^0$ set as the coding tuples for all values of $e$ can be moved along this same sequence; for this, property (5) is important. Further, we will show that the absence of such a sequence is enough to miss a $\Delta_2^0$ set. This is because, if all coding sequences are finite, then we can produce a $\Delta_2^0$ set which changes the value of each element more times than the length of the coding sequence which attempts to determine its value. This idea will be made more formal in the proofs to follow.
	
	When constructing an $f$-coding sequence, it will be more convenient to omit (5). We call such a sequence of intervals and maps a \textit{weak $f$-coding sequence}. We may occasionally refer to an $f$-coding sequence as a \textit{strong $f$-coding sequence} to emphasize that it is not just a weak $f$-coding sequence. When the function $f$ is clear, we may simply say a coding sequence. In this section, we will be concerned solely with infinite $f$-coding sequences, but in the following section we will be concerned with finite $f$-coding sequences. In those sections we may not always write ``infinite'' or ``finite'' respectively.
	
	By general arguments, one can always turn an infinite weak $f$-coding sequence into an infinite $f$-coding sequence.
	
	\begin{lemma}\label{lem:coding-sequence}
		Let $f$ be a unary function on ${(\omega,<)}$, and suppose that $f$ has an infinite weak $f$-coding sequence. Then $f$ has an infinite  (strong) $f$-coding sequence.
	\end{lemma}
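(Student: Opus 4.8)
The only property that needs to be arranged is (5); conditions (1)--(4) are already provided by the weak coding sequence. The plan is to pass to a carefully chosen subsequence of the maps (composing consecutive ones) so as to force the intervals to march off to the right.

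The key structural point is that, because each $\varphi_i$ is \emph{non-decreasing}, the coding tuples $\bar{u}_1 := [a_1,b_1]$ and $\bar{u}_{i+1} := \varphi_i(\bar{u}_i)$ are non-decreasing coordinate-by-coordinate as $i$ grows. Moreover they are pairwise distinct: if $\bar{u}_i = \bar{u}_j$ with $i<j$, then $\varphi_{i\mapsto j}$ restricts to an order-preserving injection of the finite set $\bar{u}_i$ onto itself, hence to the identity on $\bar{u}_i$; since each $\varphi_m$ is non-decreasing, a telescoping of the inequalities $\varphi_m(x) \geq x$ forces $\varphi_i$ itself to fix $\bar{u}_i$ pointwise, and then $\varphi_{1\mapsto i+1}$ and $\varphi_{1\mapsto i}$ agree on $[a_1,b_1] = \bar{u}_1$ --- contradicting property (4) together with the parity remark, since exactly one of these two maps preserves $f$ on $[a_1,b_1]$. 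A coordinate-wise non-decreasing sequence of distinct finite tuples must have some coordinate tending to infinity (the largest coordinate does, for instance), so the coding tuples escape to infinity.

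Given this, I would extract a subsequence $i_0 < i_1 < i_2 < \cdots$ with two features. First, it should alternate parity, $i_k \equiv i_0 + k \pmod 2$; then, setting $\psi_k := \varphi_{i_k \mapsto i_{k+1}}$, the composite $\psi_{k+1}\circ\psi_k = \varphi_{i_k\mapsto i_{k+2}}$ preserves $f$ while $\psi_0 = \varphi_{i_0\mapsto i_1}$ does not, so (2)--(4) survive passing to the subsequence. Second, using that the coding tuples escape, I would take the $i_k$ far enough apart that the relevant portions of the coding tuples at consecutive stages, together with their $f$-closures, are pairwise disjoint and increasing --- this is what buys us (5). The new intervals are then the minimal $f$-closed intervals containing these portions (which lie inside the already-$f$-closed $[a_{i_k},b_{i_k}]$), and the new maps are the corresponding restrictions of the $\psi_k$; properties (1)--(5) can then be verified directly.

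The main obstacle is that the ambient intervals $[a_i,b_i]$ of the given weak coding sequence need not escape to infinity: they can remain anchored near the bottom of ${(\omega,<)}$ while growing only upward, so no subsequence of the $[a_i,b_i]$ themselves can be made to satisfy (5). One therefore has to work not with the full coding tuples but with their ``escaping tails'' --- the block of coordinates that tend to infinity --- and with the $f$-closures of those, and the delicate point is to choose where the tail begins and to verify that the restrictions of the maps $\psi_k$, which do \emph{not} preserve $f$, nonetheless send each such interval into the next. Coordinating this choice with the parity bookkeeping is the crux of the proof; everything else is routine.
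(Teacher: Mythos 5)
Your outline is the right one --- conditions (1)--(4) come for free, you correctly identify that some coordinates of the coding tuples must escape to infinity (your distinctness argument via the parity rule is a slick way to see this), and you correctly identify that the ambient intervals $[a_i,b_i]$ need not escape, so one must pass to the ``escaping tail'' $[a_1',b_1]$ of $[a_1,b_1]$ and to $f$-block-closures of its images. But the proposal has a genuine gap precisely at the point you flag as delicate, and the way you phrase it suggests you may not see what the real difficulty is. You write that one must ``verify that the restrictions of the maps $\psi_k$, which do \emph{not} preserve $f$, nonetheless send each such interval into the next.'' The interval-preservation part is actually routine (just take the minimal block-closed interval containing the image). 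The real problem is the clause you wrote as a given: a restriction of a non-$f$-preserving map can perfectly well preserve $f$. It is entirely possible that the failure of $f$-preservation in $\varphi_1$ occurs only among the elements of $[a_1,a_1')$ that you are throwing away, in which case your restricted sequence would be a valid sequence of intervals and maps satisfying (1), (2), (3), (5) --- but with every map $f$-preserving, so (4) fails and you have not produced a coding sequence at all.

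Closing this gap is the actual content of the lemma and it requires an argument. The paper's proof shows that $\varphi_1$ \emph{does} preserve $f$ on the non-escaping part $[a_1,a_1')$: for $x$ in that range, there is an even $n$ (uniform in $x$ by finiteness) with $\varphi_{1\mapsto n+1}(x) = \varphi_{1\mapsto n+2}(x)$; since $\varphi_{1\mapsto n+1}$ is $f$-preserving (same parity) and $\varphi_{2\mapsto n+2}$ is $f$-preserving (same parity) and injective, one can cancel $\varphi_{2\mapsto n+2}$ to conclude that $\varphi_1$ itself preserves $f$ on $[a_1,a_1')$. Only then can one conclude that $\varphi_1$ fails to preserve $f$ on $[a_1',b_1]$, which is what lets the recursion run. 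Without this step the construction is not known to yield anything satisfying condition (4). So: right skeleton, and you correctly located the crux, but the crux is a real argument and not merely a bookkeeping verification, and it is missing.
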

	\begin{proof}
		Suppose that we have a weak $f$-coding sequence with intervals $[a_1, b_1], [a_2, b_2], \ldots$ and maps $\vp_i$. Recall that all of the maps are non-decreasing.
		
		We say that an element $x \in [a_1,b_1]$ \textit{eventually increases to infinity} if, for each $y$, there is some composition of the $\varphi$ which, when applied to $x$, increase it above $y$. That is, if for every $y$ there is some $n$ such that $\varphi_n(\cdots(\varphi_1(x))\cdots) > y$. If $x$ does not eventually increase to infinity, then there must be some $y$ such that for all sufficiently large $n$, $\varphi_n(\cdots(\varphi_1(x))\cdots)  = y$. Note that $b_1$ eventually increases to infinity, as otherwise there would be $n$ such that for all $m \geq n$ the map $\varphi_m$ would be the identity, contradicting (4). Also, we note that if $x$ eventually increases to infinity, then so does every $y \geq x$, and also every $y$ in the same block as $x$.
		
		Let $a_1' \in [a_1,b_1]$ be such that every element of $[a_1,a_1')$ does not eventually increase to infinity, and every element of $[a_1',b_1]$ does eventually increase to infinity. By the above remarks, $[a_1',b_1]$ is closed under blocks. First, we argue that $\varphi_1$ does not preserve $f$ when restricted to $[a_1',b_1]$. This is because it does preserve $f$ when restricted to $[a_1,a_1')$, as we now argue. There is $n$ even such that for all $x \in [a_1,a_1')$
		\[\varphi_n(\cdots(\varphi_1(x))\cdots)  = \varphi_{n+1}(\varphi_n(\cdots(\varphi_1(x))\cdots)).\]
		But $\varphi_{n+1} \circ \cdots \circ \varphi_2 = \varphi_{2 \mapsto n+1}$ preserves $f$ since $n$ is even. So $\varphi_1$ preserves $f$ on $[a_1,a_1')$, and so does not preserve $f$ when restricted to $[a_1',b_1]$.
		
		Now since $a_1'$ eventually increases to infinity, there must be some odd $n$ with $\varphi_n(\cdots(\varphi_1(a_1'))) > b_1$. (Since $b_1$ is the largest element of its block, this also implies that all elements of any block containing the image of an element $[a_1',b_1]$ under $\varphi_n \circ \cdots \circ \varphi_1$ is also greater than $b_1$.) We now replace $[a_1,b_1]$ by $[a_1',b_1]$ and delete all of the blocks $[a_2,b_2],\ldots,[a_n,b_n]$ so that the next block after $[a_1',b_1]$ is $[a_{n+1},b_{n+1}]$. We still have a map $\varphi_1^* = \varphi_{1 \mapsto n+1}$ from $[a_1',b_1]$ to $[a_{n+1},b_{n+1}]$. Now we replace $[a_{n+1},b_{n+1}]$ with the smallest interval $[b_{n+1}',a_{n+1}']$ which contains the image of $[a_1',b_1]$ under $\varphi_1^*$ and which is closed under blocks. By choice of $n$, we get that $a_{n+1}' > b_1$.
		
		Thus we have now obtained
		\[ \xymatrix@C=3mm{ [a_1',b_1]\ar[dr]_{\varphi_1^*}&& [a_{n+2},b_{n+2}]\ar[dr]_{\varphi_{n+2}} && [a_{n+4},b_{n+4}]\ar[dr]_{\varphi_{n+4}}&\cdots& \\
			&[a_{n+1}',b_{n+1}']\ar[ur]_{\varphi_{n+1}}&&[a_{n+3},b_{n+3}]\ar[ur]_{\varphi_{n+3}}&&[a_{n+5},b_{n+5}]&\cdots}.\]
		This is a weak $f$-coding sequence with $a_{n+1}' > b_1$. By continuing this process, now with the second interval $[a_{n+1}',b_{n+1}']$, and so on, we eventually obtain an $f$-coding sequence.
	\end{proof}
	
	\begin{theorem} \label{thm:all-d2}
		Let $f$ be a block function such that every block, except for finitely many, embeds into some later block. Then the degree spectrum of $f$, on a cone, is all $\Delta_2^0$ degrees if and only if there is an infinite $f$-coding sequence.
	\end{theorem}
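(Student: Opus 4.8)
The plan is to prove the two implications separately, in each case proving a lightface statement and then relativizing; since the existence of an infinite $f$-coding sequence is absolute, this is harmless. For the direction that builds copies I would work above an oracle computing $f$ (equivalently $\alpha_f$), the embedding relation on block types, and a fixed infinite $f$-coding sequence; for the other direction, above an oracle computing $f$ and $\alpha_f$. As usual I suppress the oracle. (Note that under the standing hypothesis the spectrum already strictly contains the c.e.\ degrees, by \Cref{them:cond-for-c.e.}, so the content of the theorem is the dichotomy between ``all of $\Delta^0_2$'' and ``strictly between''.)

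\textbf{If there is an infinite $f$-coding sequence.} Fix one, $[a_1,b_1],[a_2,b_2],\dots$ with maps $\varphi_i$, and let $\rho$ be the finite sequence of block types realised inside $[a_1,b_1]$. Given any $\Delta^0_2$ set $X$ with computable approximation $(X_s)$, normalised so $X_0=\varnothing$, I would build a computable $\mc{A}\cong{(\omega,<)}$ by stages with $f^{\mc{A}}\equiv_T X$. For each $e$ I reserve a coding region $\bar u_e$, placed so that $\bar u_0<\bar u_1<\cdots$; if $e$'s approximation has changed $k=k_e(s)$ times by stage $s$, I realise $\bar u_e$ as the image of $[a_1,b_1]$ under $\varphi_{1\mapsto k+1}$ (with $\varphi_{1\mapsto 1}$ the identity) sitting inside a copy in $\mc{A}$ of the block-closed interval $[a_{k+1},b_{k+1}]$. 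When $X_s(e)$ changes I advance $\bar u_e$ one step, inserting new elements of $\mc{A}$ below and among the current region so that the surrounding copy of $[a_{k+1},b_{k+1}]$ grows into a copy of $[a_{k+2},b_{k+2}]$ and $\bar u_e$ becomes $\varphi_{k+1}$ of its former self. Property~(5) keeps the regions of distinct $e$ disjoint and in order, so these surgeries never interfere, and property~(1) means only whole blocks are ever inserted. The running bookkeeping issue — that the block-type sequence of $\mc{A}$ must converge to $\alpha_f$ — I would handle along the lines of the marker-moving constructions in Section~3 of \cite{MHT18}: maintain an alignment of the realised blocks with an initial segment of $\alpha_f$ in which already-realised blocks may be smaller than their targets and are grown whenever an insertion forces a shift (this is where the hypothesis that all but finitely many blocks embed into later blocks is used). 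Since $X$ is $\Delta^0_2$ every $\bar u_e$ settles, so $(\mc{A},f^{\mc{A}})\cong({(\omega,<)},f)$. Then $X$ drives the construction, so $f^{\mc{A}}\le_T X$; and for $X\le_T f^{\mc{A}}$ I would arrange the coding regions to be locatable from the $f^{\mc{A}}$-structure and read off $X(e)$ from the local structure at the $e$-th region: by property~(4), and because there is a unique order-isomorphism between any two finite linear orders, $\varphi_{1\mapsto k+1}$ fails to preserve $f$ exactly when $k$ is odd, so this local structure reflects the parity of $k_e$, hence whether $e\in X$. Relativising, the spectrum is all $\Delta^0_2$ degrees on a cone.

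\textbf{If there is no infinite $f$-coding sequence.} By \Cref{lem:coding-sequence} there is then also no infinite weak one. I would build a $\Delta^0_2$ set $X$ with $X\not\le_T f^{\mc{A}}$ for every computable copy $\mc{A}\cong{(\omega,<)}$; since $f^{\mc{A}}\equiv_T X$ would force $X\le_T f^{\mc{A}}$, this places $\deg_T(X)$ outside the spectrum, so the spectrum is not all of $\Delta^0_2$, and relativising this holds on a cone. Enumerate all pairs $(\mc{A}_e,\Phi_i)$ of a partial computable linear order and a Turing functional. The requirement $R_{e,i}$ owns a fresh witness $n=n_{e,i}$ and, at each stage at which $\Phi_i^{f^{\mc{A}_{e,s}}}(n)$ is defined, it flips $X(n)$ if necessary so as to disagree with that value. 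Distinct requirements own disjoint witnesses and $\Phi_i$ never queries $X$, so they do not interact. If $\mc{A}_e\cong{(\omega,<)}$ and $\Phi_i^{f^{\mc{A}_e}}(n)\!\downarrow$, the approximation stabilises to the true value, so $R_{e,i}$ flips at most once more and then disagrees with it forever; otherwise $R_{e,i}$ is met vacuously. The one thing to verify is that for each fixed $n=n_{e,i}$ the approximation $\Phi_i^{f^{\mc{A}_{e,s}}}(n)$ changes value only finitely often, so that $X\in\Delta^0_2$ — and this is exactly where the hypothesis is used: each change of value forces new elements to be inserted below or among the use of the computation, producing new configurations of positions, and an infinite sequence of such changes yields an infinite weak $f$-coding sequence, a contradiction.

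I expect that last step — turning an infinite sequence of value-changes into an infinite weak $f$-coding sequence — to be the main obstacle. The raw data (the tuples of $\pi_s$-positions occupied by the use-elements and the order-embeddings between the block-closed intervals around them) does not directly satisfy the coding-sequence axioms: the intervals can be set up so that each single map fails to preserve $f$, but one also needs the composition of any two consecutive maps to preserve $f$, whereas a priori only the $\Phi_i$-value is known to be preserved; and one must cope with the use growing without bound, with $\Phi_i$ being partial at intermediate stages, and with $\mc{A}_e$ possibly not being isomorphic to ${(\omega,<)}$ (so that positions run off to infinity). The plan is to pass to a subsequence of the changes on which more of the local $f$-pattern recurs, to enlarge the intervals so the induced maps carry blocks onto blocks, and to use the embedding hypothesis to grow blocks coherently; \Cref{lem:coding-sequence} then upgrades the resulting weak coding sequence to a strong one, completing the contradiction. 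A secondary difficulty, in the first direction, is the bookkeeping that keeps the block-type sequence of $\mc{A}$ equal to $\alpha_f$ under repeated insertions, together with the related task of making the coding regions $f^{\mc{A}}$-locatable; I expect both to follow the template of Section~3 of \cite{MHT18} but to need care.
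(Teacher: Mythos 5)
Your first direction is essentially the paper's construction and would work with the usual care: the only thing you gloss over is that property~(5) does \emph{not} make the surgeries for distinct $e$ non-interfering. When some $e'<e$ changes, elements are inserted below $\bar u_e$ and its position in ${(\omega,<)}$ shifts, so $\bar u_e$ can no longer sit at $\varphi_{1\mapsto k_e(s)+1}([a_1,b_1])$ even though $X(e)$ hasn't changed. The paper handles this by advancing $\bar v_e$ by an even number of steps $\varphi_{k\mapsto \ell}$ to a later interval of the same parity with $a_\ell$ large enough to absorb the inserted elements; this is where property~(5) is actually used. Your scheme, which ties $\bar u_e$'s target interval rigidly to the count $k_e(s)$, has no room to do this; it is a repairable gap but a real one, and your appeal to the ``marker-moving'' bookkeeping of \cite{MHT18} does not name it.

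The second direction, however, rests on a goal that is provably unachievable, not merely technically hard. You propose to build a $\Delta^0_2$ set $X$ with $X\not\le_T f^{\mc A}$ for \emph{every} computable copy $\mc A\cong{(\omega,<)}$. When $f$ is not intrinsically computable (the only interesting case here, since otherwise the conclusion is trivial), the hypotheses of the theorem together with \Cref{them:cond-for-c.e.} give that the degree spectrum contains all c.e.\ degrees, in particular $\mathbf 0'$. Fix a copy $\mc A_0$ with $f^{\mc A_0}\equiv_T\mathbf 0'$; then every $\Delta^0_2$ set $X$ satisfies $X\le_T\mathbf 0'\equiv_T f^{\mc A_0}$. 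So no $\Delta^0_2$ set has the property you are trying to diagonalize for, and any finite-injury argument aimed at producing one must break down. Concretely, where it breaks is exactly the step you flag as ``the main obstacle'': with only the reduction $\Phi_i^{f^{\mc A_e}}=X$ in play, an infinite sequence of value changes at a witness gives maps $\varphi_i$ that fail to preserve $f$ (your property~(4)), but there is nothing whatsoever forcing the compositions $\varphi_{i+1}\circ\varphi_i$ to preserve $f$ (property~(3)); the $f$-pattern on the use can wander arbitrarily and never return. ``Passing to a subsequence on which the local $f$-pattern recurs'' cannot be made to work, because no such subsequence need exist, and if it did you would be proving the impossible statement above.

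The paper avoids this by diagonalizing against Turing \emph{equivalence} rather than one-sided reducibility: the requirements are of the form $R_{e,i,j}$, asking that either $\Phi_i^{f^{\cL_e}}\neq C$ or $\Psi_j^{C}\neq f^{\cL_e}$. The second functional $\Psi_j^{C}=f^{\cL_e}$ is what manufactures property~(3): $C$ is constrained to change only on the single restrained witness $x$, and $C(x)$ cycles between two values as the phase parity changes, so by use-preservation $f^{\cL_e}$ on the relevant interval is forced to cycle back and forth between two configurations (this is exactly the content of \Cref{lem:stages}). That oscillation is what makes the maps built from successive phases compose into an $f$-preserving map, i.e., yields a genuine weak coding sequence, to which \Cref{lem:coding-sequence} is then applied. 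Your proposal has the right outer shape (finite-injury, change a witness, extract a coding sequence) but is missing the second functional, and without it the construction neither produces a coding sequence nor a $\Delta^0_2$ set.
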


        We prove the two directions of the theorem as separate theorems.

        {
        \renewcommand{\thetheorem}{\ref*{thm:all-d2}A}
        \begin{theorem}\label{thm:A}
            Let $f$ be a block function such that every block, except for finitely many, embeds into some later block. Then if there is an infinite $f$-coding sequence then the degree spectrum of $f$, on a cone, is all $\Delta_2^0$ degrees.
        \end{theorem}
        \addtocounter{theorem}{-1}
        }
	\begin{proof}
		Suppose there is an infinite $f$-coding sequence $[a_1, b_1], [a_2,b_2], [a_3,b_3], \ldots$ with functions $\varphi_i$. We show that, on the cone above $\alpha_f$ and this coding sequence, the degree spectrum of $f$ is all $\Delta_2^0$ degrees. As usual, we will suppress the base of the cone and assume that $\alpha_f$ and this coding sequence are computable. We will also assume, by fixing a finite initial segment, that every block embeds into a later block. Then we must show that the degree spectrum of $f$ is all $\Delta^0_2$ degrees. Since the degree spectrum of $f$ is contained in the $\Delta^0_2$ degrees, we must show that every $\Delta^0_2$ is in the degree spectrum of $f$.
		
		Fix a $\Delta_2^0$ set $X$. We will build a computable copy $\cA$ of ${(\omega, <)}$ such that $f^{\mc{A}} \equiv_T X$. During this construction, the elements we place into the linear order will fall into one of two categories: \textit{coding elements} and \textit{padding elements}. Padding elements are elements that are added to move coding elements, and play no other role in the coding. We will have to ensure that the approximation $f^{\mc{A}_s}$ to $f^{\mc{A}}$ stays the same on the padding elements; we can do this because every block embeds into a later block.  For each $e \in \omega$, there will be some collection of coding elements corresponding to $e$. We divide these into the \textit{initial coding elements} which are the elements first chosen to code whether $e \in X$. It is these elements on which we will check the values of $f^{\mc{A}}$ to determine whether $e$ is in $X$. When we first start coding whether $e \in X$, we will add initial coding elements to $\mc{A}$ corresponding (via the current guess at the isomorphism $\mc{A} \to {(\omega,<)}$) to an interval $[a_i,b_i]$ with $i$ even if $e \notin X$ or odd if $e \in X$. If, at some later stage, we see that $e$ has entered or exited $X$, we insert elements into $\mc{A}$ so that these initial coding elements now correspond to elements of some $[a_j,b_j]$, $j > i$, with the parity of $j$ depending on whether $e \in X$ or $e \notin X$; moreover, the elements in this interval that they correspond to should be the images, under the $\varphi$'s, of the elements they previously corresponded to. It is also possible that some $e' < e$ will enter or exit $X$, causing us to add elements to $\mc{A}$ for the sake of $e'$, and thus requiring us to take action for the coding elements for $e$ as well. Because the embeddings $\varphi$ might not be surjective, as the construction progresses, the initial coding elements will ``gather'' other \textit{subsequent coding elements}. The coding elements for $e$ will be consecutive and we call them the \textit{coding segment} for $e$.
		
		The goal of the construction is to move the coding elements so that the values of $f^{\mc{A}}$ on them mirror whether the corresponding $e$ is in $X$ or not, while also ensuring that once padding elements have been added to the structure, the value of $f^{\mc{A}}$ on those elements does not change. Given $e' < e$, the coding segment for $e$ will be greater than the coding segment for $e'$, so that we can move the segment for $e$ without moving the segment for $e'$.
		
		\medskip
		
		\noindent \textbf{Construction}: 
		For each $e$, starting from stage $e$ where they are defined, we will have the initial coding elements $\bar{u}_e \in \mc{A}$, the full coding segment $\bar{v}_e \in \mc{A}$ containing $\bar{u}_e$, and a restraint $r_e = \max_{\mc{A}} \bar{v}_e \in \mc{A}$ which is the $\mc{A}$-largest element of $\bar{v}$. These are all tuples of $\mc{A}$ (not of $(\omega,<)$, though they do correspond to elements of $(\omega,<)$), and we write each tuple in $\mc{A}$-increasing order. The initial coding elements $\bar{u}_e$ will never change once defined, while the coding segment $\bar{v}_e$ may have more elements added, and the restraint $r_e$ may increase in the $\mc{A}$ order as, when $\bar{v}_e$ obtains more elements, $r_e = \max_{\mc{A}} \bar{v}_e$ may increase in the $\mc{A}$-order. The previous sentence is referring to which elements of $\mc{A}$ these tuples consist of, rather than referring to which elements of $(\omega,<)$ they correspond to as this may increase throughout the construction. So for example, $\bar{u}_e$ is a fixed tuple of $\mc{A}$, but may correspond to increasingly greater tuples from $(\omega,<)$ as more and more elements are added to $\mc{A}$. We will use $\bar{v}_e[s]$ and $r_e[s]$ to refer to these elements at stage $s$. We will always have that if $e' \leq e$ then $\bar{v}_e > r_{e'}$ in the $\mc{A}$-ordering so that the coding segment for $e$ is free of the restraint of $e'$. 
		
		Recall that at each stage $s$, given a tuple $\bar{a} \in \mc{A}_s$, such as the tuples $\bar{u}_e$ and $\bar{v}_e$, $\pi_s(\bar{a})$ is the corresponding tuple in ${(\omega,<)}$. We also have $f^{\mc{A}_s}$ which is the function $f$ on $\mc{A}_s$.
		
		From each stage to the next, we will maintain the following properties:
		\begin{enumerate}
			\item At each stage $s \geq e$, the coding elements $\bar{v}_e$ of $\mc{A}$ correspond to $\pi_s(\bar{v}_e)$ in ${(\omega,<)}$ which is an interval $[a_i,b_i]$, with $i$ odd if $e \in X_s$ or $i$ even if $e \notin X_s$.
			\item If at stage $s \geq e$ the coding elements $\bar{v}_e[s]$ corresponded (via $\pi_s \colon \mc{A}_s \to {(\omega,<)}$) to an interval $[a_i,b_i]$, and at stage $s+1$ the coding elements $\bar{v}[s+1]$ correspond (via $\pi_{s+1} \colon \mc{A}_s \to {(\omega,<)}$) to an interval $[a_j,b_j]$, then the coding elements $\bar{v}_e[s]$ at stage $s$ correspond at stage $s+1$ to the images, under the $\varphi$, of the elements of ${(\omega,<)}$ that they corresponded to at stage $s$. That is,
			\[ \pi_{s+1}(\bar{v}_e[s]) = \varphi_{i \mapsto j} (\pi_s(\bar{v}_e[s])).\]
			\item If, at stage $s$ an element $a \in \mc{A}$ is a padding element, by which we mean that it is not a coding element (i.e., not part of any $\bar{v}_e[s]$), then at stage $s+1$ it is still not a coding element, and $f^{\mc{A}_s}(a) = f^{\mc{A}_{s+1}}(a)$.
			\item From stage $s \geq e$ to stage $s+1$, elements can only be added to $\mc{A}$ below $r_e$ if there is $e' \leq e$ which entered or left $X$ at stage $s+1$, i.e., with $e' \in X_{s} \triangle X_{s+1}$. Thus, if $X_{s+1} \Eres_{e} = X_s \Eres_e$,\footnote{We write $Y \Eres_n$ for the restriction of $Y$ to indices $\leq n$ (with $Y \upharpoonright_n$ being the restriction of $Y$ to indices $< n$).} then the partial isomorphism $\pi_s \colon \mc{A}_s \to {(\omega,<)}$ does not change at stage $s+1$ on elements below $r_e$.
		\end{enumerate}
		
		\smallskip
		
		\noindent\textit{Stage $0$:} We begin with $\mc{A}_0$ being empty.
		
		\smallskip
		
		\noindent\textit{Stage $s$:} At this stage in the construction our partial linear order $\mc{A}_s$ can be partitioned into a finite number of coding segments and padding blocks, each of which form an interval:
		\[ \bar{p}_0 <_{\mc{A}} \bar{v}_0[s] <_{\mc{A}} \bar{p}_1 <_{\mc{A}} \bar{v}_1[s] <_{\mc{A}} \cdots .\]
		We add new elements to $\mc{A}_s$ to code $X_{s+1}$.
		
		We do not do anything to $\bar{p}_0$. We begin by handling $\bar{v}_0[s]$. If $X_{s+1}(0) = X_s(0)$, then we do nothing. But if $X_{s+1}(0) \neq X_s(0)$, then we must act. Suppose that $0 \in X_{s}$ but $0 \notin X_{s+1}$. Then $\bar{v}_0[s]$ corresponds to the elements of an interval $[a_i,b_i]$ for $i$ even. We insert $a_{i+1} - a_i$ new padding elements above the elements of $\bar{p}_0$ and below the elements of $\bar{v}_0[s]$. Also add new coding elements ($b_{i+1} + a_i - a_{i+1} - b_i$ of them) to ensure that $\bar{v}_0[s]$ corresponds, in $\mc{A}_{s+1}$, to $\vp_i(\pi_s(\bar{v}_0[s])) \subseteq [a_{i+1},b_{i+1}]$. These new elements will be subsequent coding elements; let $\bar{v}_0[s+1]$ consist of $\bar{v}_0[s]$ together with these new elements of $\mc{A}$. If $0 \notin X_s$ but $0 \in X_{s+1}$, then we act similarly but move $\bar{v}_0$ from an interval $[a_i,b_i]$ for $i$ odd to $[a_{i+1},b_{i+1}]$ with $i+1$ even.
		
		Now we have added several elements below $\bar{p}_1$. We must ensure that the values of $f$ on $\bar{p}_1$ at stage $s+1$ are the same as they were at stage $s$. For each block making up $\bar{p}_1$, insert new padding elements below the least element of this block and possibly between the elements of this block to move them up to the image of the original $f$-block in some other $f$-block into which it embeds. (We use here the fact that each $f$-block embeds into infinitely many other blocks.) Further, we add enough padding elements on the end to complete these blocks. This ensures that for all padding elements present at stage $s$, the value of $f^{\cA_{s+1}}$ on these elements at stage $s+1$ is the same as it was at stage $s$. 
		
		We continue this process, in turn, with the subsequence coding segments and padding blocks. In increasing order, we ensure that each of these segments from $\cA_{s}$ still satisfy our requirements in $\cA_{s+1}$. However we must now take into account the fact that we may have already inserted elements.
		\begin{itemize}
			\item Given a padding segment $\bar{p}_i$, consider each set of elements which corresponded at stage $s$ to a block. We must make sure that those same elements correspond in $\mc{A}_{s+1}$ to a block of the same type. For each block making up $\bar{p}_i$, insert new padding elements below the least element of this block and possibly between the elements of this block to move them up to the image of the original $f$-block in some other $f$-block into which it embeds. Further, ensure we add enough padding elements on the end to complete this block. This ensures that for all padding elements present at stage $s$, the value of $f^{\cA_s}$ on these elements is the same as at stage $s+1$. 
			\item Given $\bar{v}_e[s]$ the coding segment corresponding to $e$, first check the value of $X_{s+1}(e)$. Even if $X_{s+1}(e) = X_s(e)$ has not changed, we may have added new elements below $\bar{v}_e[s]$ so that it no longer corresponds to the same interval $[a_k,b_k]$ that it did at stage $s$. Let $\varphi_{k \mapsto \ell}$ be such that (i) $\ell$ is even if $e \in X_{s+1}$ and $\ell$ is odd if $e \notin X_{s+1}$, and (ii) $\ell$ is sufficiently large that by adding elements below and between the elements of $\bar{v}_e[s]$ that at stage $s+1$ the segment $\bar{v}_e[s]$ corresponds to the image, under $\varphi_{k \mapsto \ell}$, of $[a_k,b_k] = \pi_s(\bar{v}_e[s])$. (Note that (5) in the definition of a coding sequence---the difference between a weak coding sequence and a coding sequence---is key to obtain such an $[a_\ell,b_\ell]$ with $a_\ell$ large enough to accommodate the new elements that have been added to the linear order below $\bar{v}_e[s]$.) Any new elements that were inserted and end up in the new interval are added to the collection of coding elements $\bar{v}_e[s+1]$ corresponding to $e$, and the other newly added elements are padding elements. Further, we ensure that enough new coding elements are after the final element to ensure we complete the entire interval, putting these elements in $\bar{v}_e[s+1]$ as well. Thus $\bar{v}_e[s+1]$ will now consist of $\pi_{s+1}^{-1}([a_\ell,b_\ell])$.
			\item Finally, we introduce the coding elements corresponding to $s$. After we have ensured all previously added elements still satisfy the requirements check the value of $X_{s+1}(s)$ (which we may assume is $0$, i.e., $s \notin S_{x+1}$) and identify the next interval of the form $[a_i, b_i]$ such that $i$ is even if $s \in X_{s+1}$ and $i$ is odd if $s \notin X_{s+1}$, and with $a_i$ greater than the length of the linear order at this stage. Insert enough new padding elements to the end of the linear order to extend it to have length $a_i$, then add $b_i-a_i+1$ new coding elements corresponding to the interval $[a_i,b_i]$. These are the initial coding elements $\bar{u}_s[s+1]$ for $s$, and make up the coding segment $\bar{v}_s[s+1]$ at this stage. This ensures we have satisfied requirement $R_s$ at stage $s$. 
		\end{itemize}
		\textbf{Verification}: First, to see that $\cA$ is really a computable copy of ${(\omega, <)}$, observe that for any fixed $a \in \mc{A}$, only finitely many elements are inserted below $a$. This follows from (4). Because $a$ was added at stage $s$ in the construction, then there can be at most $s-1$ coding blocks below it. An element is only inserted below $a$ if some $e$ corresponding to one of these coding blocks enters or leaves $X$, with $a$ below the restraint. After these $s-1$ elements of the $\Delta_2^0$ set $X$ stop changing values, elements will stop being inserted below $a$. Hence, since these are finitely many elements of a $\Delta_2^0$ set, this will occur in a finite number of stages.
		
		To show that $f^{\cA} \equiv_T X$, we use the following claim which follows from (2).
		
		\begin{claim}\label{lem:comp-of-maps}
			Suppose that $s < t$. If, at stage $s$, $\bar{v}_e[s]$ corresponds to $[a_m,b_m]$ and, at stage $t$, $\bar{v}_e[t]$ corresponds to $[a_n,b_n]$, then the elements of $\bar{v}_e[s]$ corresponds at stage $t$ to the images under $\varphi_{m \mapsto n}$ of the elements they corresponded to at stage $s$:
			\[ [a_n,b_n] \supseteq \pi_{t}(\bar{v}_e[s]) = \varphi_{m \mapsto n} (\pi_s(\bar{v}_e[s])) = \varphi_{m \mapsto n} [a_m,b_m]. \]
			In particular, if $m$ and $n$ have the same parity, then $f^{\mc{A}_t}(a) = f^{\mc{A}_s}(a)$. If $m$ and $n$ have opposite parities, then there is $a \in \bar{u}_e$ such that $f^{\mc{A}_t}(a) \neq f^{\mc{A}_s}(a)$.
		\end{claim}
		\begin{proof}
			The first part follows from (2) using induction. The second part follows from the definition of a coding sequence, particularly which $\varphi_{m \mapsto n}$ are $f$-preserving.
		\end{proof}
		
		We first use the lemma and (1) to show that $f^{\cA} \ge_T X$. Given some element $e$ run the above construction, which is computable, until stage $e+1$ when the initial coding elements $\bar{u}_e$ corresponding to $e$ are introduced. Now, compute the true value of $f^{\mc{A}}$ on these elements. If $f^{\mc{A}}$ is the same on $\bar{u}_e$ as it was at stage $e+1$, then $X(e) = X_e(e+1)$. Otherwise, if $f^{\mc{A}}$ is different on $\bar{u}_e$ than it was at stage $e+1$, then $X(e) = 1- X_e(e+1)$.
		
		Finally, we show that $X \ge_T f^{\cA}$. This uses the lemma as well as properties (1) and (3). Given some element $a \in \mc{A}$ run the above construction until $a$ is added to the linear order. Using (3) if $a$ is added as a padding element then the construction ensures that $f^{\cA}(a)$ does not change and so we take the value at this stage. If $a$ is a coding element corresponding to $e$ then we use (1) and the lemma. We will see that $f^{\cA}(a)$ takes on one of two values depending on $X(e)$. Say that $a$ first appears in $\mc{A}$ at a stage $s$ in a coding block $\bar{v}_e[s]$, corresponding at that stage to $[a_i,b_i]$ and with $X_s(e) = k$. If $X(e) = X_s(e) = k$, then $f^{\cA}(a)$ is the same as it was at stage $s$. Otherwise, suppose that $X(e) \neq X_s(e) = k$. Then, at some stage $t > s$, we find that $X(e) = X_t(e) = 1-k$. At this stage, $a$ is an element of $\bar{v}_e[t]$ which corresponds to $[a_j,b_j]$ with $j$ of a different parity from $i$. Then $f^{\cA}(a)$ is the same as it was at stage $t$.
    \end{proof}

    {
    \renewcommand{\thetheorem}{\ref*{thm:all-d2}B}
    \begin{theorem}\label{thm:B}
        Let $f$ be a block function such that every block, except for finitely many, embeds into some later block. Then if the degree spectrum of $f$ on a cone is all $\Delta_2^0$ degrees then there is an infinite $f$-coding sequence.
    \end{theorem}
    \addtocounter{theorem}{-1}
    }
    \begin{proof}
		We work on the cone above $\alpha_f$ (which we suppress) and show that if no infinite $f$-coding sequences exist then we can produce $\Delta^0_2$ set $C$ such that there is no computable copy $\cL$ of ${(\omega,<)}$ with $f^{\cL} \equiv_T C$. To do this, we meet the  following requirements
		\[ R_{e, i, j}: \text{If $\cL_e \cong {(\omega,<)}$, then either $\Phi_i^{f^{\cL_e}}\neq C$ or $\Psi_j^C \neq f^{\cL_e}$}\]
		where $\cL_e$ is a computable listing of the (possibly partial) linear orders. The construction is a finite injury priority construction. Given a set $X$, we make use of the notation $X[0,\ldots,k]$ for the sequence $\langle X(0),\ldots,X(k) \rangle$.
		
		The strategy for $R_{e, i, j}$ is as follows. First, if $e$th program fails to code a linear order the requirement is automatically satisfied and so we will assume that at all stages $s$, $\cL_{e,s}$ is a linear order. When we say that a requirement $\mc{R}$ places a restraint on $C$, this means that it is restraining that part of $C$ from changing for the sake of a lower-priority requirement. $\mc{R}$ itself may cause $C$ to change on the restraint.
		\begin{enumerate}
			\item To initialize this requirement choose some $x$ that has not yet been restrained, restrain it, and assign it to this requirement. (This requirement will not cause $C$ to change on any value other than $x$, and so this is the only function of the restraints placed by the higher-priority requirements.) We begin with $C(x) = 0$. We say the strategy is in Phase 0. 
			\item At stage $s$, if the requirement is in Phase 0, say this requirement requires attention if there are computations 
			\[ \Phi_{i, s}^{f^{\cL_{e, s}}}(x) = 0 = C_s(x) \quad \text{with use } u_0\]
			and 
			\[ \Psi_{j, s}^{C_s}[0, \ldots, m_0] = f^{\cL_{e, s}}[0, \ldots, m_0] \quad \text{with use } v_0 \]
			where $m_0 \ge u_0$ is such that all $f$-blocks that intersect $[0, \ldots, u_0]$ in $\mc{A}_{e,s}$, and all elements less than them in $\cL_{e,s}$, are completely contained in $[0, \ldots, m_0]$.
			
			When this requirement acts, restrain $[0, \ldots, v_0]$ in $C$ and define $C_{s+1}(x) =1$. Finally, move the requirement to Phase 1. 

\[\begin{pgfpicture}
\pgfpathmoveto{\pgfpoint{99.75bp}{444.169061bp}}
\pgfpathlineto{\pgfpoint{250.25bp}{591.645623bp}}
\pgfusepath{use as bounding box}
\begin{pgfscope}
\pgfsetlinewidth{0.5bp}
\pgfsetrectcap
\pgfsetmiterjoin
\pgfsetmiterlimit{10.0}
\pgfpathmoveto{\pgfpoint{100.0bp}{491.889764bp}}
\pgfpathlineto{\pgfpoint{100.0bp}{471.889764bp}}
\pgfpathlineto{\pgfpoint{150.0bp}{471.889764bp}}
\pgfpathlineto{\pgfpoint{150.0bp}{491.889764bp}}
\pgfpathlineto{\pgfpoint{100.0bp}{491.889764bp}}
\pgfclosepath
\definecolor{strokepaint}{rgb}{0.0,0.0,0.0}\pgfsetstrokecolor{strokepaint}
\pgfusepath{stroke}
\end{pgfscope}
\begin{pgfscope}
\pgftransformcm{1.0}{-0.0}{0.0}{1.0}{\pgfpoint{126.167969bp}{481.954217bp}}
\pgftext[center,center]{\sffamily\mdseries\upshape\normalsize\color[rgb]{0.0,0.0,0.0}$x$}
\end{pgfscope}
\begin{pgfscope}
\pgftransformcm{1.0}{-0.0}{0.0}{1.0}{\pgfpoint{125.679688bp}{447.872186bp}}
\pgftext[center,center]{\sffamily\mdseries\upshape\normalsize\color[rgb]{0.0,0.0,0.0}$C$}
\end{pgfscope}
\begin{pgfscope}
\pgftransformcm{1.0}{-0.0}{0.0}{1.0}{\pgfpoint{228.019531bp}{447.868279bp}}
\pgftext[center,center]{\sffamily\mdseries\upshape\normalsize\color[rgb]{0.0,0.0,0.0}$f^{\mathcal{L}}$}
\end{pgfscope}
\begin{pgfscope}
\pgfsetlinewidth{0.5bp}
\pgfsetrectcap
\pgfsetmiterjoin
\pgfsetmiterlimit{10.0}
\pgfpathmoveto{\pgfpoint{100.0bp}{571.889764bp}}
\pgfpathlineto{\pgfpoint{100.0bp}{551.889764bp}}
\pgfpathlineto{\pgfpoint{150.0bp}{551.889764bp}}
\pgfpathlineto{\pgfpoint{150.0bp}{571.889764bp}}
\pgfpathlineto{\pgfpoint{100.0bp}{571.889764bp}}
\pgfclosepath
\definecolor{strokepaint}{rgb}{0.0,0.0,0.0}\pgfsetstrokecolor{strokepaint}
\pgfusepath{stroke}
\end{pgfscope}
\begin{pgfscope}
\pgftransformcm{1.0}{-0.0}{0.0}{1.0}{\pgfpoint{126.212891bp}{561.954217bp}}
\pgftext[center,center]{\sffamily\mdseries\upshape\normalsize\color[rgb]{0.0,0.0,0.0}$v_0$}
\end{pgfscope}
\begin{pgfscope}
\pgfsetlinewidth{0.5bp}
\pgfsetrectcap
\pgfsetmiterjoin
\pgfsetmiterlimit{10.0}
\pgfpathmoveto{\pgfpoint{200.0bp}{511.889764bp}}
\pgfpathlineto{\pgfpoint{200.0bp}{491.889764bp}}
\pgfpathlineto{\pgfpoint{250.0bp}{491.889764bp}}
\pgfpathlineto{\pgfpoint{250.0bp}{511.889764bp}}
\pgfpathlineto{\pgfpoint{200.0bp}{511.889764bp}}
\pgfclosepath
\definecolor{strokepaint}{rgb}{0.0,0.0,0.0}\pgfsetstrokecolor{strokepaint}
\pgfusepath{stroke}
\end{pgfscope}
\begin{pgfscope}
\pgftransformcm{1.0}{-0.0}{0.0}{1.0}{\pgfpoint{226.320312bp}{501.954217bp}}
\pgftext[center,center]{\sffamily\mdseries\upshape\normalsize\color[rgb]{0.0,0.0,0.0}$u_0$}
\end{pgfscope}
\begin{pgfscope}
\pgfsetlinewidth{0.5bp}
\pgfsetrectcap
\pgfsetmiterjoin
\pgfsetmiterlimit{10.0}
\pgfpathmoveto{\pgfpoint{200.0bp}{551.889764bp}}
\pgfpathlineto{\pgfpoint{200.0bp}{531.889764bp}}
\pgfpathlineto{\pgfpoint{250.0bp}{531.889764bp}}
\pgfpathlineto{\pgfpoint{250.0bp}{551.889764bp}}
\pgfpathlineto{\pgfpoint{200.0bp}{551.889764bp}}
\pgfclosepath
\definecolor{strokepaint}{rgb}{0.0,0.0,0.0}\pgfsetstrokecolor{strokepaint}
\pgfusepath{stroke}
\end{pgfscope}
\begin{pgfscope}
\pgftransformcm{1.0}{-0.0}{0.0}{1.0}{\pgfpoint{228.490234bp}{541.954217bp}}
\pgftext[center,center]{\sffamily\mdseries\upshape\normalsize\color[rgb]{0.0,0.0,0.0}$m_0$}
\end{pgfscope}
\begin{pgfscope}
\pgftransformcm{1.0}{-0.0}{0.0}{1.0}{\pgfpoint{226.664062bp}{562.950311bp}}
\pgftext[center,center]{\sffamily\mdseries\upshape\normalsize\color[rgb]{0.0,0.0,0.0}$\vdots$}
\end{pgfscope}
\begin{pgfscope}
\pgfsetlinewidth{0.5bp}
\pgfsetrectcap
\pgfsetmiterjoin
\pgfsetmiterlimit{10.0}
\pgfsetdash{{2.0bp}{4.0bp}}{0.0bp}
\pgfpathmoveto{\pgfpoint{200.0bp}{561.889764bp}}
\pgfpathlineto{\pgfpoint{200.0bp}{551.889764bp}}
\definecolor{strokepaint}{rgb}{0.0,0.0,0.0}\pgfsetstrokecolor{strokepaint}
\pgfusepath{stroke}
\end{pgfscope}
\begin{pgfscope}
\pgfsetlinewidth{0.5bp}
\pgfsetrectcap
\pgfsetmiterjoin
\pgfsetmiterlimit{10.0}
\pgfsetdash{{2.0bp}{4.0bp}}{0.0bp}
\pgfpathmoveto{\pgfpoint{250.0bp}{561.889764bp}}
\pgfpathlineto{\pgfpoint{250.0bp}{551.889764bp}}
\definecolor{strokepaint}{rgb}{0.0,0.0,0.0}\pgfsetstrokecolor{strokepaint}
\pgfusepath{stroke}
\end{pgfscope}
\begin{pgfscope}
\pgfsetlinewidth{0.5bp}
\pgfsetrectcap
\pgfsetmiterjoin
\pgfsetmiterlimit{10.0}
\pgfpathmoveto{\pgfpoint{200.0bp}{511.889764bp}}
\pgfpathlineto{\pgfpoint{150.0bp}{491.889764bp}}
\definecolor{strokepaint}{rgb}{0.0,0.0,0.0}\pgfsetstrokecolor{strokepaint}
\pgfusepath{stroke}
\end{pgfscope}
{\begin{pgfscope}
\definecolor{fillpaint}{rgb}{0.0,0.0,0.0}\pgfsetfillcolor{fillpaint}
\pgfpathqmoveto{150.928482bp}{492.261163bp}
\pgfpathqlineto{153.899597bp}{495.603661bp}
\pgfpathqlineto{153.249664bp}{493.18963bp}
\pgfpathqlineto{155.385162bp}{491.889764bp}
\pgfclosepath
\pgfusepathqfill
\end{pgfscope}}
\begin{pgfscope}
\pgfsetlinewidth{0.5bp}
\pgfsetrectcap
\pgfsetmiterjoin
\pgfsetmiterlimit{10.0}
\pgfpathmoveto{\pgfpoint{150.0bp}{571.889764bp}}
\pgfpathlineto{\pgfpoint{200.0bp}{551.889764bp}}
\definecolor{strokepaint}{rgb}{0.0,0.0,0.0}\pgfsetstrokecolor{strokepaint}
\pgfusepath{stroke}
\end{pgfscope}
{\begin{pgfscope}
\definecolor{fillpaint}{rgb}{0.0,0.0,0.0}\pgfsetfillcolor{fillpaint}
\pgfpathqmoveto{199.071518bp}{552.261163bp}
\pgfpathqlineto{194.614838bp}{551.889764bp}
\pgfpathqlineto{196.750336bp}{553.18963bp}
\pgfpathqlineto{196.100403bp}{555.603661bp}
\pgfclosepath
\pgfusepathqfill
\end{pgfscope}}
\begin{pgfscope}
\pgfsetlinewidth{0.5bp}
\pgfsetrectcap
\pgfsetmiterjoin
\pgfsetmiterlimit{10.0}
\pgfpathmoveto{\pgfpoint{100.0bp}{551.889764bp}}
\pgfpathlineto{\pgfpoint{100.0bp}{541.889764bp}}
\definecolor{strokepaint}{rgb}{0.0,0.0,0.0}\pgfsetstrokecolor{strokepaint}
\pgfusepath{stroke}
\end{pgfscope}
\begin{pgfscope}
\pgfsetlinewidth{0.5bp}
\pgfsetrectcap
\pgfsetmiterjoin
\pgfsetmiterlimit{10.0}
\pgfpathmoveto{\pgfpoint{150.0bp}{551.889764bp}}
\pgfpathlineto{\pgfpoint{150.0bp}{541.889764bp}}
\definecolor{strokepaint}{rgb}{0.0,0.0,0.0}\pgfsetstrokecolor{strokepaint}
\pgfusepath{stroke}
\end{pgfscope}
\begin{pgfscope}
\pgfsetlinewidth{0.5bp}
\pgfsetrectcap
\pgfsetmiterjoin
\pgfsetmiterlimit{10.0}
\pgfsetdash{{2.0bp}{4.0bp}}{0.0bp}
\pgfpathmoveto{\pgfpoint{150.0bp}{541.889764bp}}
\pgfpathlineto{\pgfpoint{150.0bp}{531.889764bp}}
\definecolor{strokepaint}{rgb}{0.0,0.0,0.0}\pgfsetstrokecolor{strokepaint}
\pgfusepath{stroke}
\end{pgfscope}
\begin{pgfscope}
\pgfsetlinewidth{0.5bp}
\pgfsetrectcap
\pgfsetmiterjoin
\pgfsetmiterlimit{10.0}
\pgfsetdash{{2.0bp}{4.0bp}}{0.0bp}
\pgfpathmoveto{\pgfpoint{100.0bp}{541.889764bp}}
\pgfpathlineto{\pgfpoint{100.0bp}{531.889764bp}}
\definecolor{strokepaint}{rgb}{0.0,0.0,0.0}\pgfsetstrokecolor{strokepaint}
\pgfusepath{stroke}
\end{pgfscope}
\begin{pgfscope}
\pgfsetlinewidth{0.5bp}
\pgfsetrectcap
\pgfsetmiterjoin
\pgfsetmiterlimit{10.0}
\pgfpathmoveto{\pgfpoint{150.0bp}{491.889764bp}}
\pgfpathlineto{\pgfpoint{150.0bp}{501.889764bp}}
\definecolor{strokepaint}{rgb}{0.0,0.0,0.0}\pgfsetstrokecolor{strokepaint}
\pgfusepath{stroke}
\end{pgfscope}
\begin{pgfscope}
\pgfsetlinewidth{0.5bp}
\pgfsetrectcap
\pgfsetmiterjoin
\pgfsetmiterlimit{10.0}
\pgfsetdash{{2.0bp}{4.0bp}}{0.0bp}
\pgfpathmoveto{\pgfpoint{150.0bp}{501.889764bp}}
\pgfpathlineto{\pgfpoint{150.0bp}{511.889764bp}}
\definecolor{strokepaint}{rgb}{0.0,0.0,0.0}\pgfsetstrokecolor{strokepaint}
\pgfusepath{stroke}
\end{pgfscope}
\begin{pgfscope}
\pgfsetlinewidth{0.5bp}
\pgfsetrectcap
\pgfsetmiterjoin
\pgfsetmiterlimit{10.0}
\pgfpathmoveto{\pgfpoint{100.0bp}{501.889764bp}}
\pgfpathlineto{\pgfpoint{100.0bp}{491.889764bp}}
\definecolor{strokepaint}{rgb}{0.0,0.0,0.0}\pgfsetstrokecolor{strokepaint}
\pgfusepath{stroke}
\end{pgfscope}
\begin{pgfscope}
\pgfsetlinewidth{0.5bp}
\pgfsetrectcap
\pgfsetmiterjoin
\pgfsetmiterlimit{10.0}
\pgfsetdash{{2.0bp}{4.0bp}}{0.0bp}
\pgfpathmoveto{\pgfpoint{100.0bp}{501.889764bp}}
\pgfpathlineto{\pgfpoint{100.0bp}{511.889764bp}}
\definecolor{strokepaint}{rgb}{0.0,0.0,0.0}\pgfsetstrokecolor{strokepaint}
\pgfusepath{stroke}
\end{pgfscope}
\begin{pgfscope}
\pgftransformcm{1.0}{-0.0}{0.0}{1.0}{\pgfpoint{125.572266bp}{521.950311bp}}
\pgftext[center,center]{\sffamily\mdseries\upshape\normalsize\color[rgb]{0.0,0.0,0.0}$\vdots$}
\end{pgfscope}
\begin{pgfscope}
\pgftransformcm{1.0}{-0.0}{0.0}{1.0}{\pgfpoint{126.664062bp}{586.950311bp}}
\pgftext[center,center]{\sffamily\mdseries\upshape\normalsize\color[rgb]{0.0,0.0,0.0}$\vdots$}
\end{pgfscope}
\begin{pgfscope}
\pgfsetlinewidth{0.5bp}
\pgfsetrectcap
\pgfsetmiterjoin
\pgfsetmiterlimit{10.0}
\pgfsetdash{{2.0bp}{4.0bp}}{0.0bp}
\pgfpathmoveto{\pgfpoint{100.0bp}{581.889764bp}}
\pgfpathlineto{\pgfpoint{100.0bp}{571.889764bp}}
\definecolor{strokepaint}{rgb}{0.0,0.0,0.0}\pgfsetstrokecolor{strokepaint}
\pgfusepath{stroke}
\end{pgfscope}
\begin{pgfscope}
\pgfsetlinewidth{0.5bp}
\pgfsetrectcap
\pgfsetmiterjoin
\pgfsetmiterlimit{10.0}
\pgfsetdash{{2.0bp}{4.0bp}}{0.0bp}
\pgfpathmoveto{\pgfpoint{150.0bp}{581.889764bp}}
\pgfpathlineto{\pgfpoint{150.0bp}{571.889764bp}}
\definecolor{strokepaint}{rgb}{0.0,0.0,0.0}\pgfsetstrokecolor{strokepaint}
\pgfusepath{stroke}
\end{pgfscope}
\begin{pgfscope}
\pgftransformcm{1.0}{-0.0}{0.0}{1.0}{\pgfpoint{226.664062bp}{521.950311bp}}
\pgftext[center,center]{\sffamily\mdseries\upshape\normalsize\color[rgb]{0.0,0.0,0.0}$\vdots$}
\end{pgfscope}
\begin{pgfscope}
\pgfsetlinewidth{0.5bp}
\pgfsetrectcap
\pgfsetmiterjoin
\pgfsetmiterlimit{10.0}
\pgfsetdash{{2.0bp}{4.0bp}}{0.0bp}
\pgfpathmoveto{\pgfpoint{200.0bp}{531.889764bp}}
\pgfpathlineto{\pgfpoint{200.0bp}{511.889764bp}}
\definecolor{strokepaint}{rgb}{0.0,0.0,0.0}\pgfsetstrokecolor{strokepaint}
\pgfusepath{stroke}
\end{pgfscope}
\begin{pgfscope}
\pgfsetlinewidth{0.5bp}
\pgfsetrectcap
\pgfsetmiterjoin
\pgfsetmiterlimit{10.0}
\pgfsetdash{{2.0bp}{4.0bp}}{0.0bp}
\pgfpathmoveto{\pgfpoint{250.0bp}{531.889764bp}}
\pgfpathlineto{\pgfpoint{250.0bp}{511.889764bp}}
\definecolor{strokepaint}{rgb}{0.0,0.0,0.0}\pgfsetstrokecolor{strokepaint}
\pgfusepath{stroke}
\end{pgfscope}
\begin{pgfscope}
\pgftransformcm{1.0}{-0.0}{0.0}{1.0}{\pgfpoint{126.664062bp}{461.950311bp}}
\pgftext[center,center]{\sffamily\mdseries\upshape\normalsize\color[rgb]{0.0,0.0,0.0}$\vdots$}
\end{pgfscope}
\begin{pgfscope}
\pgfsetlinewidth{0.5bp}
\pgfsetrectcap
\pgfsetmiterjoin
\pgfsetmiterlimit{10.0}
\pgfsetdash{{2.0bp}{4.0bp}}{0.0bp}
\pgfpathmoveto{\pgfpoint{150.0bp}{471.889764bp}}
\pgfpathlineto{\pgfpoint{150.0bp}{461.889764bp}}
\definecolor{strokepaint}{rgb}{0.0,0.0,0.0}\pgfsetstrokecolor{strokepaint}
\pgfusepath{stroke}
\end{pgfscope}
\begin{pgfscope}
\pgfsetlinewidth{0.5bp}
\pgfsetrectcap
\pgfsetmiterjoin
\pgfsetmiterlimit{10.0}
\pgfsetdash{{2.0bp}{4.0bp}}{0.0bp}
\pgfpathmoveto{\pgfpoint{100.0bp}{471.889764bp}}
\pgfpathlineto{\pgfpoint{100.0bp}{461.889764bp}}
\definecolor{strokepaint}{rgb}{0.0,0.0,0.0}\pgfsetstrokecolor{strokepaint}
\pgfusepath{stroke}
\end{pgfscope}
\begin{pgfscope}
\pgftransformcm{1.0}{-0.0}{0.0}{1.0}{\pgfpoint{226.664062bp}{481.950311bp}}
\pgftext[center,center]{\sffamily\mdseries\upshape\normalsize\color[rgb]{0.0,0.0,0.0}$\vdots$}
\end{pgfscope}
\begin{pgfscope}
\pgfsetlinewidth{0.5bp}
\pgfsetrectcap
\pgfsetmiterjoin
\pgfsetmiterlimit{10.0}
\pgfsetdash{{2.0bp}{4.0bp}}{0.0bp}
\pgfpathmoveto{\pgfpoint{250.0bp}{491.889764bp}}
\pgfpathlineto{\pgfpoint{250.0bp}{481.889764bp}}
\definecolor{strokepaint}{rgb}{0.0,0.0,0.0}\pgfsetstrokecolor{strokepaint}
\pgfusepath{stroke}
\end{pgfscope}
\begin{pgfscope}
\pgfsetlinewidth{0.5bp}
\pgfsetrectcap
\pgfsetmiterjoin
\pgfsetmiterlimit{10.0}
\pgfsetdash{{2.0bp}{4.0bp}}{0.0bp}
\pgfpathmoveto{\pgfpoint{200.0bp}{491.889764bp}}
\pgfpathlineto{\pgfpoint{200.0bp}{481.889764bp}}
\definecolor{strokepaint}{rgb}{0.0,0.0,0.0}\pgfsetstrokecolor{strokepaint}
\pgfusepath{stroke}
\end{pgfscope}
\begin{pgfscope}
\pgftransformcm{1.0}{-0.0}{0.0}{1.0}{\pgfpoint{182.25bp}{487.872186bp}}
\pgftext[center,center]{\sffamily\mdseries\upshape\normalsize\color[rgb]{0.0,0.0,0.0}$\Phi_i$}
\end{pgfscope}
\begin{pgfscope}
\pgftransformcm{1.0}{-0.0}{0.0}{1.0}{\pgfpoint{172.25bp}{547.872186bp}}
\pgftext[center,center]{\sffamily\mdseries\upshape\normalsize\color[rgb]{0.0,0.0,0.0}$\Phi_j$}
\end{pgfscope}
\end{pgfpicture}
\]
            
			\item At stage s, if the requirement is in Phase $n+1$, say this requirement requires attention if there are computations 
			\[ \Phi_{i, s}^{f^{\cL_{e, s}}}(x) = C_s(x) \quad \text{with use } u_{n+1}\]
			and 
			\[ \Psi_{j, s}^{C_s}[0, \ldots, m_{n+1}] = f^{\cL_{e, s}}[0, \ldots, m_{n+1}] \quad \text{with use } v_{n+1} \]
			where $m_{n+1} \ge \max\{m_n, u_{n+1}\}$ such that all $f$-blocks that intersect the set of elements ${[0, \ldots, \max\{m_n, u_{n+1}\}]}$, and all elements less than them in $\cL_{e,s}$, are completely contained in $[0, \ldots, m_{n+1}]$.
			
			When this requirement acts, restrain $[0, \ldots, v_{n+1}]$ in $C$ and define $C_{s+1}(x) = 1-C_s(x)$. Finally, move the requirement to Phase $n+2$.
		\end{enumerate}
		\noindent \textbf{Construction of $C$}: At stage $s$ of the construction, consider the first $s$ requirements, in order of decreasing priority. If any requirement requires attention then the one with the highest priority acts according to its strategy, injuring and resetting all lower priority strategies. If no requirement acts, then initialize the lowest priority requirement that has not yet been initialized.
		
		\medskip
		
		\noindent \textbf{Verification}: It is not obvious that the construction is a finite injury construction, as even if a requirement $R_{e,i,j}$ is not injured, it appears on the surface that it might go through infinitely many phases. In this case, our approximation for $C$ would also not come to a limit. We will argue by induction on the requirements that each requirement acts only finitely many times and is eventually satisfied. This argument will use the fact that there is no infinite coding sequence.
		
		Consider a requirement $R_{e, i, j}$ and assume that after some stage it is no longer injured. If ${\cL}_e$ is partial (or not a linear order), then $R_{e, i, j}$ is automatically satisfied and will no longer act after some stage. So we may assume that $\cL_e$ is a linear order. If there is some $n$ such that the strategy enters Phase $n$ but never enters Phase $n+1$, then we are also done since $R_{e,i,j}$ will have acted only finitely many times and will also be satisfied. (Otherwise, if $\Phi_i^{f^{\cL_e}} = C$ and $\Phi_j^C = f^{\cL_e}$, then we would eventually enter Phase $n+1$.) So it is enough to show that the strategy enters finitely many phases as this will also show that it requires attention finitely many times. To do this we show that if, after a strategy is no longer injured, it enters Phase $n$ for arbitrarily large $n$ then we can produce an infinite $f$-coding sequence; in fact we produce a weak $f$-coding sequence and then use \Cref{lem:coding-sequence} to obtain an $f$-coding sequence.
		
		Let $s_n$ be the stage at which the strategy enters Phase $n$, if it exists, and recall that $v_n$ is the restraint placed at Phase $n$. We begin by arguing that the restraints are maintained, and that $C$ (up to the restraint) cycles back and forth between two possible configurations depending on whether the phase is odd or even.
		
		\begin{claim}\label{lem:stages}
			Let $n' > n$ be of the same parity. Then:
			\begin{enumerate}
				\item $C_{s_{n'}} [0,\ldots,v_n] = C_{s_n} [0,\ldots,v_n]$.
				\item $f^{\cL_{e,s_{n'}}}[0,\ldots,m_n] = f^{\cL_{e,s_{n}}}[0,\ldots,m_n]$.
			\end{enumerate} 
			Further, for all $n$,
			\begin{enumerate}[resume]
				\item $f^{\cL_{e,s_{n}}}[0,\ldots,u_n] \neq f^{\cL_{e,s_{n+1}}}[0,\ldots,u_n]$.
			\end{enumerate}
		\end{claim}
		\begin{proof}
			We check each of (1), (2), and (3).
			\begin{enumerate}
				\item After Phase $n$, we have restrained the elements $[0, \ldots, v_n]$ and so, since the requirement is no longer injured by higher priority arguments, the only elements in $[0, \ldots, v_n]$ that can change value is the original $x$ that was restrained for $R_{e, i, j}$. However, since $n$ and $n'$ have the same parity, the construction ensures that $C(x)$ (whether $x$ is in $C$) is the same as well. Hence,  $C_{s_{n'}} [0,\ldots,v_n] = C_{s_n} [0,\ldots,v_n]$, as desired.
				\item By construction we have $\Phi_j^{C_{s_n}}[0, \ldots, m_n] = f^{\cL_{e,s_{n}}}[0, \ldots, m_n]$ and  $\Phi_j^{C_{s_{n'}}}[0, \ldots, m_n] = f^{\cL_{e,s_{n'}}}[0, \ldots, m_n]$. Further, the first computation listed has use $v_n$. Since by (1), this use $v_n$ is the same at Phases $n$ and $n'$, the computations $\Phi_j^{C_{s_n}}[0, \ldots, m_n]$ and $\Phi_j^{C_{s_{n'}}}[0, \ldots, m_n]$ must be the same as well and so $f^{\cL_{e,s_{n'}}}[0,\ldots,m_n] = f^{\cL_{e,s_{n}}}[0,\ldots,m_n]$, as desired.
				\item By construction $\Phi_i^{f^{\cL_{e,s_{n}}}}(x) = C_{s_n}(x)$ and $\Phi_i^{f^{\cL_{e,s_{n+1}}}}(x) = C_{s_{n+1}}(x)$ but $C_{s_n}(x) \neq C_{s_{n+1}}(x)$ and so the use of the computation $\Phi_i^{f^{\cL_{e,s_{n}}}}(x)$ must have changed by stage $s_{n+1}$, otherwise we would recover the same computation. Since the use is  $u_n$, we must have $f^{\cL_{e,s_{n}}}[0,\ldots,u_n] \neq f^{\cL_{e,s_{n+1}}}[0,\ldots,u_n]$, as desired. \qedhere
			\end{enumerate}        
		\end{proof}
		
		
		
		It will be helpful to consider, at each stage $s$, the guess $\pi_s \colon \cL_{e,s} \to {(\omega,<)}$ at the isomorphism $\cL_{e} \to {(\omega,<)}$. Note that (by definition), $f^{\cL_{e,s}}$ is the image of $f$ under $\pi_s$.
		
		Finally, to produce the weak $f$-coding sequence we proceed as follows: 
		\begin{itemize}
			\item Let $[a_0, b_0]$ be the smallest initial segment of ${(\omega,<)}$ which contains all of the $\pi_{s_0}$-images of the elements $0,\ldots,u_0$ in $\cL_{e,s_0}$ and all of the blocks which they intersect. Note that all of $[a_0,b_0]$ is contained within the $\pi_{s_0}$-images of $[0,\ldots,m_0]$.
			
			\item Given $[a_i, b_i]$, let $[a_{i+1}, b_{i+1}]$ be the minimal interval in ${(\omega,<)}$ that contains all of the $\pi_{s_{i+1}}$-images of the elements $0,\ldots,\max\{m_{i},u_{i+1}\}$ in $\cL_{e,s_{i+1}}$ and all of the blocks which they intersect. Note that all of $[a_i,b_i]$ is contained within the $\pi_{s_{i+1}}$-images of $[0,\ldots,m_{i+1}]$.
			
			\item Define the maps $\varphi_i: [a_i, b_i] \to [a_{i+1}, b_{i+1}]$ by $\varphi_i = \pi_{i+1} \circ \pi_{i}^{-1}$. (The way to think about these maps is that, when $[a_i,b_i]$ was defined at stage $s_{i}$, these elements of ${{(\omega,<)}}$ corresponded to certain elements of $\cL_e$. However by stage $s_{i+1}$, we have seen more elements enter $\cL_e$, and now those elements of $\cL_e$ are instead in correspondence with other elements of ${(\omega,<)}$, namely some elements of $[a_{i+1},b_{i+1}]$. The map $f_i$ keeps track of how this correspondence has changed. The element of $\cL_e$ that corresponded to $n \in [a_i,b_i]$ now corresponds to $f_i(n) \in [a_{i+1},b_{i+1}]$.)
			
		\end{itemize}
		This sequence satisfies most of the requirements by construction. We just need to ensure that the $\vp_i$ are not $f$-preserving  but the $\vp_{i+1} \circ \vp_i$ are. 
		But this is exactly the content of Lemma \ref{lem:stages}. Hence, assuming we enter Phase $n$ for arbitrarily large $n$ we have produced an infinite weak coding sequence. From an infinite weak coding sequence, using Lemma \ref{lem:coding-sequence} we can produce an infinite coding sequence. Thus, by our assumption about the absence of such a sequence it follows that after a strategy is no longer injured it only enters finitely many phases.
	\end{proof}

From this theorem we can easily recover several previous results, such as Theorem 3.10 of \cite{BNW22}.

\subsection{An example}

Now that we have produced necessary and sufficient conditions for the degree spectrum of a unary function being either all c.e. degrees or all $\Delta_2^0$ degrees on a cone, we know exactly which conditions a unary function must satisfy to have an intermediate degree spectrum and can produce an example.

\begin{theorem}\label{ex:int-fun}
	There is a computable block function $f$ whose degree spectrum on a cone strictly contains the c.e.\ degrees and is strictly contained within the $\Delta^0_2$ degrees. Moreover, the base of the cone is the computable degree, and so the degree spectrum of this function strictly contains the c.e.\ degrees and is strictly contained within the $\Delta^0_2$ degrees.
\end{theorem}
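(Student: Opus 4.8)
The idea is to take the function $f$ described in the introduction and check that it meets the hypothesis of \Cref{them:cond-for-c.e.} on the ``not of c.e.\ degree'' side but fails the hypothesis of \Cref{thm:all-d2} on the ``not all $\Delta^0_2$'' side. Write $W = L_1 L_1 L_2 L_1 L_3 L_2 L_4 L_1 L_5\ldots$ for the word of block types of $f$; since $W$ is given explicitly, $f$ and $\alpha_f$ are computable. Two combinatorial features of $W$ will be used throughout: \textbf{(i)} every block type $L_n$ occurs in $W$ infinitely often, and \textbf{(ii)} no ordered pair of block types occurs as a pair of consecutive blocks of $W$ more than once. I will also use the elementary loop fact that an injective, order- and $f$-preserving map from a loop $L_p$ into any finite union of loops is an order-isomorphism of $L_p$ onto an $L_p$-block: its image is a nonempty $f$-closed subset of a union of loops, hence a single whole loop $L_q$, and injectivity forces $q=p$. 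In particular $L_p$ embeds (as a block type) into $L_q$ iff $p=q$, so by \textbf{(i)} every $f$-block embeds into some later $f$-block, indeed into infinitely many.

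The ``strictly contains the c.e.\ degrees'' half is then immediate: since $L_2$ occurs infinitely often, $f$ has infinitely many non-trivial blocks and so is not intrinsically computable on a cone, while ``only finitely many blocks fail to embed into a later block'' holds vacuously, so \Cref{them:cond-for-c.e.} applies and the degree spectrum of $f$ on a cone strictly contains the c.e.\ degrees. It remains to show $f$ has no infinite $f$-coding sequence; then \Cref{thm:all-d2} (whose hypothesis holds, again because every block embeds into a later one) shows the degree spectrum on a cone is not all $\Delta^0_2$ degrees, and since it is always contained in the $\Delta^0_2$ degrees (as ${(\omega,<)}$ is $0'$-categorical) it is strictly contained.

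So suppose $[a_1,b_1],[a_2,b_2],\ldots$ with maps $\varphi_i$ is an infinite $f$-coding sequence. Recall the $\varphi_i$ are order-embeddings (hence injective), that $\varphi_{i\mapsto j}$ preserves $f$ exactly when $i\equiv j \pmod 2$, and that $a_{i+1}>b_i$. The first observation is that each $\varphi_{i\mapsto i+2}$ is a \emph{block embedding}: being injective, order- and $f$-preserving between unions of loops, it carries every block of $[a_i,b_i]$ onto a block of $[a_{i+2},b_{i+2}]$ of the same type (order-isomorphically when non-trivial, $L_1$ to $L_1$ otherwise). Since $\varphi_1$ does not preserve $f$, fix $x\in[a_1,b_1]$ with $\varphi_1(f(x))\neq f(\varphi_1(x))$ and let $B$, of type $L_p$, be its $f$-block; put $O_j:=\varphi_{1\mapsto j}(B)$. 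For odd $j$, $\varphi_{1\mapsto j}$ preserves $f$ on $B$, so by the loop fact $O_j$ is a genuine $L_p$-block of $[a_j,b_j]$ (for $p=1$: an $f$-fixed point, i.e.\ an $L_1$-block); for even $j$ it is not, since composing with the $f$-preserving $\varphi_{2\mapsto j}$ and using injectivity would otherwise make $\varphi_1$ preserve $f$ on $B$. For even $j$, $\varphi_j$ carries $O_j$ onto the loop $O_{j+1}$, a block of consecutive integers, so --- nothing being mapped strictly between consecutive integers by an order-embedding --- $O_j$ is itself a run of $p$ consecutive integers; and these cannot straddle two blocks $P,Q$ of $[a_j,b_j]$, for then the block embedding $\varphi_{j\mapsto j+2}$ would carry $P,Q$ to blocks $P^\ast,Q^\ast$ of the same types which are again consecutive (as $O_{j+2}=\varphi_{j\mapsto j+2}(O_j)$ is a run of consecutive integers), exhibiting the adjacent pair of types $(\mathrm{type}\,P,\mathrm{type}\,Q)$ in $W$ at two distinct positions (since $[a_{j+2},b_{j+2}]$ lies above $[a_j,b_j]$), against \textbf{(ii)}. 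Hence $O_j$ occupies a fixed set of positions inside a single block $P_j$ of $[a_j,b_j]$, of a fixed type $L_r$ with $r>p$ (the positions and $r$ constant in $j$ by the isomorphisms $\varphi_{j\mapsto j+2}|_{P_j}$), and some position of $P_j$ abuts $O_j$ from inside $P_j$.

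The contradiction comes from chasing $P_j$ together with the block neighbouring $O_{j+1}$. For even $j$, the same ``consecutive integers'' argument shows $\varphi_j$ carries all of $P_j$ onto a run of $r$ consecutive integers position-by-position, so the position(s) of $P_j$ immediately adjacent to $O_j$ go to the integers immediately adjacent (on the corresponding side) to the $L_p$-loop $O_{j+1}$; these integers lie in the block $N$ of $W$ consecutive to $O_{j+1}$ on that side, which is a complete block of $[a_{j+1},b_{j+1}]$ by closure under $f$. Carrying everything forward by $\varphi_{j+1\mapsto j+3}$ (a block embedding) one computes that $N$ goes isomorphically to a block of $[a_{j+3},b_{j+3}]$ of the same type, again consecutive on the same side to the $L_p$-loop $O_{j+3}$. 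Thus the adjacent pair of block types $(\mathrm{type}\,O_{j+1},\mathrm{type}\,N)$ --- or its reverse --- occurs in $W$ at the positions of $O_{j+1},N$ and again strictly higher up (using $a_{i+1}>b_i$) at the positions of $O_{j+3}$ and the image of $N$, contradicting \textbf{(ii)}. So no infinite $f$-coding sequence exists, and the degree spectrum of $f$ on a cone is strictly between the c.e.\ and $\Delta^0_2$ degrees. Finally, all the combinatorial input ($W$, $\alpha_f$, embeddability of blocks into later blocks) is computable outright, and the non-existence of an infinite $f$-coding sequence refers to no oracle, so the relativizations of \Cref{them:cond-for-c.e.} and \Cref{thm:all-d2} give the same conclusion over every base $\mathbf d$ with trivial extra oracle --- in particular over $\mathbf 0$ --- showing $\text{DgSp}^{\mathbf d}_{{(\omega,<)}}(f)$ is strictly between the $\mathbf d$-c.e.\ degrees and the degrees $\Delta^0_2$ over $\mathbf d$, which is the ``moreover'' clause. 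I expect the main work to be exactly this last bookkeeping: tracking, across stages, which integer each position of each relevant block is sent to, plus the small case split according to whether $B$ is trivial and whether $O_j$ abuts the top or the bottom of $P_j$.
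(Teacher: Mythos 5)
Your proposal is essentially correct, and the main ideas (track the image of a witness block under the compositions $\varphi_{i\mapsto j}$, exploit that the $f$-preserving compositions are block embeddings, and derive a contradiction from the unique-adjacency property of $W$) are the same ideas the paper uses. But the execution is genuinely different from the paper's and is worth comparing.

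The paper packages the argument around the notions of a \emph{link}, a link becoming \emph{vulnerable} (its image meets $\geq 2$ blocks), and a link becoming \emph{broken} (an element is inserted inside its image). It shows: some link is vulnerable by $[a_3,b_3]$; a vulnerable link is broken within two more steps (this is where unique adjacency is used); and a broken link kills the sequence. This yields the explicit bound that every $f$-coding sequence has length $\leq 5$, and this vulnerable/broken machinery is re-used verbatim later (in Theorem \ref{thm:example-alpha-even}). Your proof instead isolates a clean ``loop fact'' (an injective order- and $f$-preserving map from a loop $L_p$ into a union of loops has image a full $L_p$-block) that subsumes the paper's appeal to ``only one block type of each size'' and ``no block type embeds into a different one.'' You then track $O_j=\varphi_{1\mapsto j}(B)$ for a fixed witness block $B$: for even $j$, $O_j$ is consecutive but not a block, and you rule out $O_j$ straddling two blocks by unique adjacency; in the remaining case $O_j\subsetneq P_j$ you chase $P_j$ and the neighbouring block $N$ of $O_{j+1}$ and again invoke unique adjacency to get two occurrences of the pair $(\mathrm{type}\,O_{j+1},\mathrm{type}\,N)$. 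This is a direct argument that no infinite coding sequence exists, without the vulnerable/broken framework; it is somewhat more concrete/computational, and it does work (the ``one computes'' step, that $\varphi_{j+1\mapsto j+3}(N)$ is adjacent to $O_{j+3}$, does go through, but to see that $\varphi_{j+2}(\max O_{j+2}+1)=\max O_{j+3}+1$ you end up using that $\varphi_{j+2}(P_{j+2})$ is consecutive, which requires the existence of $[a_{j+4},b_{j+4}]$ --- harmless here since the sequence is assumed infinite, and consistent with the paper's explicit bound of $5$). One small thing you assert without argument (as does the paper) is that infinitely many non-trivial blocks implies $f$ is not intrinsically computable; this is standard for block functions, but worth a citation or a sentence.
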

\begin{proof}
	Let $L_k$ be the block corresponding to the loop of length $k$, i.e., the block isomorphic to $[1, \ldots, k] \to [1, \ldots, k]$ via $x \mapsto x+1$ for $x<k$ and $k \mapsto 1$. Consider the block function $f$ where the odd blocks are given by the sequence $L_1, L_2, L_3, L_4, L_5, \ldots$ and the even blocks are given by the sequence $L_1, L_1, L_2, L_1, L_2, L_3, \ldots$. An initial segment looks like:  
	\[L_1 + L_1 + L_2 + L_1 + L_3 + L_2 + L_4 + L_1 + L_5 + L_2 + L_6 + L_3 + L_7 + \cdots  \]
	This function is constructed so that it satisfies the following: 
	\begin{itemize}
		\item all blocks that occur in the function occur infinitely often,
		\item no two block types embed in another different block type,
		\item no two different block types that occur in the function have the same size, and
		\item no two blocks types are adjacent (in the same order) more than once.
	\end{itemize}
	Since each block occurs infinitely often the degree spectrum of $f$ on a cone must contain a non-c.e. degree. To show that the degree spectrum of $f$ on a cone does not contain all $\Delta_2^0$ degrees we show that there is no infinite $f$-coding sequence. Indeed we will show that any $f$-coding sequence has length at most five. By Theorems \ref{them:cond-for-c.e.} and \ref{thm:all-d2} this proves the theorem. (For the moreover statement, one must observe that for this particular $f$ no cone is required in the proofs of these theorems.)
	
	Consider any (possibly finite) $f$-coding sequence $[a_1, b_1], [a_2, b_2], \ldots$ and corresponding maps $\varphi_i$. We make the following definitions. 
	\begin{enumerate}
		\item Say that $l_1 < \ldots < l_p$ form a \textit{link} in an interval $[a_i,b_i]$ if they form a block of length $p$ in $[a_i,b_i]$. Say that this interval \textit{witnesses} this link. 
		\item Say that a link $l_1 < \ldots < l_p$ in $[a_i,b_i]$ is \textit{vulnerable} in $[a_j,b_j]$, $j > i$, if the images $\varphi_{i \mapsto j}(l_1) < \cdots < \varphi_{i \mapsto j}(l_p)$ are no longer contained in a single block and now intersect two or more different blocks.
		\item Say that a link $l_1 < \ldots < l_p$ in $[a_i,b_i]$ is \textit{broken} in $[a_j,b_j]$, $j > i$, if some element is inserted between the images $\varphi_{i \mapsto j}(l_1)$ and $\varphi_{i \mapsto j}(l_p)$, i.e., the $l_r$ are no longer adjacent after applying $\varphi_{i \mapsto j}$.
	\end{enumerate}
	We first claim that either there is a link witnessed in $[a_1,b_1]$ that becomes vulnerable in $[a_2,b_2]$, or there is a link witnessed in $[a_2,b_2]$ that becomes vulnerable in $[a_3,b_3]$. The map $\varphi_1$ from $[a_1,b_1]$ to $[a_2,b_2]$ is not $f$-preserving, and so there is a link $l_1 < \cdots < l_p$ witnessed in $[a_1,b_1]$ such that $\varphi_1$ is not $f$-preserving on these elements. This link forms a block of size $p$ in $[a_1,b_1]$, and $f$ has only one block type of each size, so either the images of $l_1 < \cdots < l_p$ in $[a_2,b_2]$ lie in two different blocks or are contained in some larger block. In the former case, the link $l_1 < \cdots < l_p$ has become vulnerable in $[a_2,b_2]$. In the second case, let $k_1 < \cdots < k_q$ be this larger block in $[a_2,b_2]$ containing the images of $l_1,\ldots,l_p$. Then $k_1 < \cdots < k_q$ is a link witnessed in $[a_2,b_2]$ which becomes vulnerable in $[a_3,b_3]$. This is because $\varphi_{1 \mapsto 3}$ from $[a_1,b_1]$ to $[a_3,b_3]$ is $f$-preserving and so the images of $l_1 < \cdots < l_p$ form a block in $[a_3,b_3]$; this block is contained in, but not all of, the image of $k_1 < \cdots < k_q$ in $[a_3,b_3]$.
	
	Now suppose that we have a link $l_1,\ldots,l_p$ which is witnessed in $[a_i,b_i]$ and which becomes vulnerable in $[a_{i+1},b_{i+1}]$. We claim that it must break in either $[a_{i+2},b_{i+2}]$ or $[a_{i+3},b_{i+3}]$. Indeed, in $[a_{i+1},b_{i+1}]$ the images of the link $l_1,\ldots,l_p$ lie in at least two different blocks, and so the same is true in $[a_{i+3},b_{i+3}]$. Moreover, since the map $\varphi_{i+1 \mapsto i+3}\colon [a_{i+1},b_{i+1}] \to [a_{i+3},b_{i+3}]$ is $f$-preserving, in $[a_{i+3},b_{i+3}]$ the images of $l_1,\ldots,l_p$ lie in blocks of the same block types. However, since $\varphi_{i \mapsto i+2} \colon [a_{i},b_{i}] \to [a_{i+2},b_{i+2}]$ is $f$-preserving, the images of $l_1,\ldots,l_p$ form a block in $[a_{i+2},b_{i+2}]$. Thus---as each $\varphi_j$ is order-preserving---$\varphi_{i+1 \mapsto i+3}\colon [a_{i+1},b_{i+1}] \to [a_{i+3},b_{i+3}]$ cannot be the identity. Since each pair of $f$-blocks only appears adjacent to each other once in $f$, some element must be inserted between the images in $[a_{i+3},b_{i+3}]$ of $l_1$ and $l_p$; thus the link must be broken in $[a_{i+3},b_{i+3}]$ if not earlier.
	
	Now we argue that after a link is broken, the coding sequence must terminate. Suppose that a link $l_1,\ldots,l_p$ is witnessed in $[a_i,b_i]$ but broken in $[a_j,b_j]$. Then $i$ and $j$ must be of opposite parities, as if they were of the same parity then the map $\vp_{i \mapsto j} \colon [a_i,b_i] \to [a_j,b_j]$ would be $f$-preserving, and the images of $l_1,\ldots,l_p$ would form a block in $[a_j,b_j]$, and hence would be a sequence of successive elements. Thus, as $i$ and $j$ have opposite parities, the map $\varphi_{i \mapsto j+1}$ is $f$-preserving, so the images of $l_1,\ldots,l_p$ in $[a_{j+1},b_{j+1}]$ must form a block of size $p$ as $l_1,\ldots,l_p$ did in $[a_i,b_i]$. However, in $[a_j,b_j]$ and hence in $[a_{j+1},b_{j+1}]$ the images of $l_1,\ldots,l_p$ are no longer successive elements, preventing them from forming such a block. Thus the coding sequence must terminate with $[a_j,b_j]$.
	
	Thus we have a link in $[a_1,b_1]$ or $[a_2,b_2]$ which becomes vulnerable in $[a_2,b_2]$ or $[a_3,b_3]$, and hence broken by $[a_5,b_5]$ or before. There can be no more elements of the coding sequence.
\end{proof}

\section{Many different intermediate spectra}

Now that we know that intermediate degree spectrum are possible, we may ask for a more specific description of these intermediate degree spectra. How many possible intermediate degree spectra are there? Are particular interesting classes of degrees possible degree spectra? In this section, we continue to limit the relations we consider to block functions since this is the simplest case. Even so, we find significant complexity. We will particularly focus on the $\alpha$-c.e.\ degrees, whose definition we now recall.

\begin{definition}
	Let $(\alpha+1,\prec)$ be (a presentation of) a computable ordinal. A set $A$ is \textit{$\alpha$-c.e.}\ if there is a computable approximation function $g \colon \omega^2 \to \{0,1\}$ and a computable counting function $r \colon \omega^2 \to \alpha+1$ such that 
	\begin{enumerate}
		\item for all $x$, $g(x,0) = 0$,
		\item for all $x$, $\rank(x,0) = \alpha$,
		\item for all $x$ and $s$, $\rank(x,s+1) \preceq \rank(x,s)$,
		\item if $g(x,s+1) \neq g(x,s)$ then $\rank(x,s+1) \prec \rank(x,s)$, and
		\item $A(x) = \lim_{s \to \infty} g(x,s)$.
	\end{enumerate}
	We say that a set is of $\alpha$-c.e.\ degree if it is Turing equivalent to an $\alpha$-c.e.\ set.
\end{definition}
\noindent The definition of the $\alpha$-c.e.\ sets depends on the chosen the presentation of $\alpha+1$; indeed, for any $\Delta^0_2$ set, there is some presentation of $\omega^2+1$ which makes that set $\omega^2$-c.e. But for any fixed presentation of $\omega^2+1$, there are $\Delta^0_2$ sets which are not $\omega^2$-c.e. Any ordinal has a computable presentation on a cone, so when working on a cone we do not need to assume that ordinals are computable.

\begin{definition}
    Let $\mc{A}$ be a structure and $R$ a relation on $\mc{A}$. Let $\alpha$ be a (presentation of) an ordinal. We say that:
    \begin{enumerate}
        \item The degree spectrum of $R$ on a cone contains all $\alpha$-c.e.\ degrees if there is an $X$ (which we may assume computes that presentation of $\alpha$) such that for all $Y \geq_T X$, the degree spectrum of $R$ relative to $Y$ contains all degrees above $Y$ which are $\alpha$-c.e.\ relative to $Y$.
        \item The degree spectrum of $R$ on a cone is contained within the $\alpha$-c.e.\ degrees if there is an $X$ (which we may assume computes that presentation of $\alpha$) such that for all $Y \geq_T X$, the degree spectrum of $R$ relative to $Y$ is contained within the degrees above $Y$ which are $\alpha$-c.e.\ relative to $Y$.
    \end{enumerate}
\end{definition}

While these definitions involve a presentation of the ordinal, by the following remark the statements ``the degree spectrum of $R$ on a cone contains all $\alpha$-c.e.\ degrees'' and ``the degree spectrum of $R$ on a cone is contained within the $\alpha$-c.e.\ degrees'' are well-defined without fixing a presentation.

\begin{remark}\label{remark:wd}
    Let $\alpha$ be an ordinal, and let $(P,<_P)$ and $(Q,<_Q)$ be two presentations of $\alpha+1$. We write $\alpha_P$-c.e.\ for the $\alpha$-c.e.\ sets and degrees defined with respect to the presentation $P$, and similarly for $\alpha_Q$-c.e. There is a cone on which $(P,<_P)$ and $(Q,<_Q)$ are computable, and the $\alpha_P$-c.e.\ sets are the same as the $\alpha_Q$-c.e. sets. This cone is the join of the two presentations and an isomorphism between them. Thus working on a cone, it is reasonable to talk about the $\alpha$-c.e.\ sets without reference to presentation. This is a somewhat subtle point in that it is not true that there is a cone on which all presentations are equivalent, but rather that there is in some sense a single presentation up to equivalence in the limit.
    
    For example, let $R$ be a relation on a structure $\mc{A}$. Let $(P,<_P)$ and $(Q,<_Q)$ be any two presentations of $\alpha+1$. Suppose that, on a cone, the degree spectrum of $R$ is equal to the $\alpha_P$-c.e.\ degrees. Then, on a cone, the degree spectrum of $R$ will also be equal to the $\alpha_Q$-c.e.\ degrees because on a cone the $\alpha_P$-c.e.\ sets are the same as the $\alpha_Q$-c.e. sets. Thus whether or not the degree spectrum of $R$ on a cone is equal to the $\alpha$-c.e.\ degrees is independent of the presentation of $\alpha$ chosen, and so we can just say that the degree spectrum of $R$ on a cone is equal to the $\alpha$-c.e.\ degrees.
\end{remark}

Recall that, while there is no listing of all $\Delta^0_2$ sets, for each (presentation of a) computable ordinal $\alpha$ there is a listing of all of the $\alpha$-c.e.\ sets. The following theorem rules out, for block functions, any degree spectrum that can be listed in such a way other than the c.e.\ sets.

\begin{theorem}\label{thm:no-list}
	Let $f$ be a block function which is not intrinsically computable and such that, after some initial segment, all blocks embed into some later block. Let $(X_e^A)_{e \in \omega}$ be, uniformly in $A$, an $A$-computable list of (computable approximations to) $\Delta^0_2(A)$ sets each of which computes $A$. Then, for all $A$ on a cone, the degree spectrum of $f$ relative to $A$ contains some degree which is not Turing equivalent to any set in the listing $(X_e^A)_{e \in \omega}$. 
\end{theorem}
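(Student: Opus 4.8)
The plan is to build, relative to every $A$ on a suitable cone, an $A$-computable copy $\mathcal{L}$ of ${(\omega,<)}$ whose induced function $f^{\mathcal{L}}$ has Turing degree different from $\deg_T(X^A_e)$ for all $e$. Since every member of the degree spectrum of $f$ relative to $A$ has the form $\deg_T(f^{\mathcal{L}}\oplus A)$ and each $X^A_e \geq_T A$, this produces a spectrum element off the listing. First I would pass to the cone above $f$ together with the oracle defining the listing, so that, relative to any $A$ above it: $\alpha_f$ and the embeddability relation on block types are $A$-computable, the list $(X^A_e)_e$ is uniformly $A$-computable (this is given), each $X^A_e\geq_T A$, and it is decided whether $f$ admits an infinite $f$-coding sequence. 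Then split into two cases.

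\textbf{Case 1: $f$ has an infinite $f$-coding sequence.} Then by \Cref{thm:all-d2} the degree spectrum of $f$ relative to $A$ is all of the $\Delta^0_2(A)$ degrees above $A$, so it suffices to build one $\Delta^0_2(A)$ set $C$ with $\deg_T(C\oplus A)\neq\deg_T(X^A_e)$ for every $e$. This is a routine finite-injury diagonalization meeting the requirements $\lnot\,\bigl(\Phi_i^{X^A_e}=C \wedge \Psi_j^{C\oplus A}=X^A_e\bigr)$: reserve a follower $x$, keep $C(x)=0$, and flip $C(x)$ whenever the two computations currently agree on an initial segment past their uses, placing a restraint on the $\Psi_j$-use in $C$; because each $X^A_e$ is $\Delta^0_2(A)$, the restriction of $X^A_e$ to the (eventually fixed) $\Phi_i$-use changes only finitely often, so the strategy acts finitely often and is eventually won. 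This is just the relativized fact that no uniformly computable list of approximations exhausts the $\Delta^0_2$ degrees.

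\textbf{Case 2: $f$ has only finite $f$-coding sequences.} Here I would build $\mathcal{L}\cong{(\omega,<)}$ $A$-computably by a priority construction meeting $R_{e,i,j}:\lnot\,\bigl(\Phi_i^{f^{\mathcal{L}}}=X^A_e \wedge \Psi_j^{X^A_e}=f^{\mathcal{L}}\bigr)$. Each $R_{e,i,j}$ controls a stream of \emph{coding gadgets}, each a run of consecutive blocks of $\mathcal{L}$ whose $f^{\mathcal{L}}$-configuration can be moved — by inserting padding and subsequent coding elements exactly as in the proof of \Cref{thm:all-d2} — one step at a time along a weak $f$-coding sequence that is grown on the fly; such gadgets and such steps are always locally available because, after the fixed initial segment, every block type recurs. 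The strategy waits for a stage with $\Phi_i^{f^{\mathcal{L}}}(x)=X^A_e(x)$ and $\Psi_j^{X^A_e}\restriction m = f^{\mathcal{L}}\restriction m$, where $x$ is a point of the current gadget and $m$ is past both uses and closed under the relevant blocks; it then moves the gadget one step, thereby changing $f^{\mathcal{L}}$ on $[0,m]$. If agreement is never restored, $R_{e,i,j}$ is met permanently; otherwise $X^A_e$ has changed below the $\Psi_j$-use, the gadget is moved again, and when its coding sequence can no longer be grown the requirement restrains past that gadget and continues with the next one in its stream, chosen so that its initial configuration continues the coded value and so that moving it leaves the restrained region untouched.

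The main obstacle is the verification, and in Case 2 it is where essentially all the work lies. One must show both that $\mathcal{L}$ really is a copy of ${(\omega,<)}$ — only finitely many elements are inserted below any fixed element — and that each requirement is eventually met. The difficulty, absent in Case 1, is that here the opponent's object is the $\Delta^0_2(A)$ set $X^A_e$: each time we move a gadget the opponent may change $X^A_e$, on an a priori growing prefix, to restore agreement, whereas our coding sequences are only finite. The plan is to follow the verification pattern of \Cref{thm:all-d2}: keep track of the successive weak-coding-sequence steps that a single requirement performs across all of its gadgets, and argue that if it acted infinitely often these would combine into one infinite weak $f$-coding sequence, whence by \Cref{lem:coding-sequence} an infinite $f$-coding sequence, contradicting Case 2. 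The heart of the matter is making this combination legitimate — the transition from an exhausted gadget to its successor must be a genuine coding-sequence map (order-preserving, not $f$-preserving, composing $f$-preservingly with its predecessor, and respecting property (5) so that the growing prefix of $\mathcal{L}$ below a gadget can be absorbed) — which forces the placements, ``continuation'' conventions and restraints of successive gadgets to be chosen with care, managing the interplay between gadgets at different positions of the linear order. Once that is set up, the remaining bookkeeping (convergence of the approximation to $\mathcal{L}$, stability of $f^{\mathcal{L}}$ on padding and on settled coding elements, and the finite-injury structure of the priority ordering) parallels the proofs of \Cref{thm:all-d2} and \Cref{ex:int-fun}.
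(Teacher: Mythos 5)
Your proposal is far more complicated than the paper's argument, and its Case 2 contains a genuine gap. The paper does \emph{not} split on whether $f$ has an infinite coding sequence, and it uses no coding sequences at all. It runs a single finite-injury construction of a computable copy $\mathcal{A}$ with requirements $R_{e,i,j}$, each of which controls \emph{one} fixed $f$-block of size~$>1$, with elements $l_0,\ldots,l_{b-a}$. The strategy alternately perturbs (inserts one element below the block, changing $f^{\mathcal{A}}$ on the $l_i$) and resets (moves the $l_i$ to the image of the original block inside a later copy of the same block type, exactly restoring $f^{\mathcal{A}}$ on them). Because every block embeds into infinitely many later blocks, this perturb/reset cycle can be repeated forever; there is no ``exhaustion'' and no need for a stream of gadgets. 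Termination is then not a structural fact about $f$ at all: since $f^{\mathcal{A}}$ is fully restored on all restrained elements at every even phase, the computation $\Psi_j^{f^{\mathcal{A}}}[0,\ldots,u_0]=X_e[0,\ldots,u_0]$ (with the Phase-0 use $v_0$) is recovered at every even phase, so $X_e[0,\ldots,u_0]$ is forced back to its Phase-0 value; meanwhile $f^{\mathcal{A}}$ on the $l_i$ flips each phase while $\Phi_i^{X_e}$ is required to track it with the fixed even-phase use $u_0$, so $X_e$ must change on the \emph{fixed finite set} $[0,\ldots,u_0]$ at every phase transition. Infinitely many phases would make $X_e$ fail to be $\Delta^0_2$, which is given, so each requirement acts finitely often. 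The hypothesis that there is no infinite $f$-coding sequence plays no role.

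The gap in your Case 2 is exactly the step you flag as delicate and defer: ``these would combine into one infinite weak $f$-coding sequence.'' The moves you perform across the stream of gadgets do not assemble into a single coding sequence — a coding sequence is a chain of intervals linked by composable maps with a strict parity/$f$-preservation pattern, whereas a transition from an exhausted gadget to a fresh one is a reinitialization at a new location, not a map $\varphi_i$ in a weak coding sequence, and there is no reason the growing $\Phi_i$-uses stay controlled across gadgets. More fundamentally, you have transposed the roles from \Cref{thm:all-d2}: there the \emph{constructed} object is a $\Delta^0_2$ set, and the absence of infinite coding sequences is what lets it be $\Delta^0_2$; here the $\Delta^0_2$ object $X_e$ is \emph{given}, so you should be exploiting its finiteness of mind-changes on a fixed window rather than trying to re-import the coding-sequence machinery. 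Your Case 1 is correct but unnecessary, since the single construction above already handles every $f$ satisfying the hypotheses.
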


\begin{corollary}
	Let $f$ be a block function whose degree spectrum on a cone strictly contains the c.e.\ degrees. Then, for every $\alpha$, the degree spectrum of $f$ on a cone contains a non-$\alpha$-c.e.\ degree.
\end{corollary}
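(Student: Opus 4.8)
The plan is to deduce this immediately from \Cref{thm:no-list}. First I would check that $f$ satisfies the hypotheses of that theorem. Since the degree spectrum of $f$ on a cone strictly contains the c.e.\ degrees, it is in particular larger than $\{\mathbf{0}\}$, so $f$ is not intrinsically computable on a cone; and it is not \emph{equal} to the c.e.\ degrees, so $f$ is not intrinsically of c.e.\ degree on a cone. By \Cref{them:cond-for-c.e.} the latter forces that only finitely many blocks fail to embed into a later block, i.e.\ after some initial segment every block embeds into a later block. Thus $f$ meets the hypotheses of \Cref{thm:no-list}.

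Next, fix an ordinal $\alpha$. I would pass to a cone on which $\alpha+1$ has a computable presentation; using the earlier remark that, on a cone, the $\alpha$-c.e.\ sets are independent of the chosen presentation of $\alpha+1$, the notion of an $\alpha$-c.e.\ set relative to $A$ is then well defined on this cone. Relative to any $A$ on this cone, effectively list all $\alpha$-c.e.$(A)$ sets together with their approximations: there is a uniformly $A$-computable sequence of pairs $(g_e^A,r_e^A)$ of approximation and counting functions such that the sets $Y_e^A := \lim_s g_e^A(\cdot,s)$ are exactly the $\alpha$-c.e.$(A)$ sets. Now set $X_e^A := Y_e^A \oplus A$, with $A$-computable approximation $h_e^A(2x,s) = g_e^A(x,s)$ and $h_e^A(2x+1,s) = A(x)$ (and counting function $\alpha$ on the odd coordinates). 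Each $X_e^A$ is $\Delta^0_2(A)$, computes $A$, and the sequence $(h_e^A)_e$ is uniformly $A$-computable, so $(X_e^A)_e$ is a list of the kind required by \Cref{thm:no-list}.

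Applying \Cref{thm:no-list}, for all $A$ on a (further) cone the degree spectrum of $f$ relative to $A$ contains a degree $\mathbf{d}$ which is not Turing equivalent to any $X_e^A$. I claim $\mathbf{d}$ is not $\alpha$-c.e.\ relative to $A$. Indeed $\mathbf{d} \geq \deg_T(A)$ since $\mathbf{d}$ lies in the degree spectrum relative to $A$; so if $\mathbf{d}$ were the degree of some $\alpha$-c.e.$(A)$ set $Z$, then $Z \geq_T A$ and hence $Z \equiv_T Z \oplus A$. But $Z$ appears in the list as $Y_e^A$ for some $e$, so $Z \oplus A = X_e^A$, giving $\mathbf{d} = \deg_T(X_e^A)$, a contradiction. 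Hence $\mathbf{d}$ is a non-$\alpha$-c.e.\ degree in the degree spectrum relative to $A$, for every $A$ on a cone, which is exactly the assertion that the degree spectrum of $f$ on a cone contains a non-$\alpha$-c.e.\ degree.

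This argument is essentially bookkeeping once \Cref{thm:no-list} and \Cref{them:cond-for-c.e.} are in hand; the only point requiring a little care is that the family $(X_e^A)_e$ simultaneously (i) contains a Turing-equivalent copy of \emph{every} $\alpha$-c.e.$(A)$ degree that can arise in the degree spectrum relative to $A$ (this is why we join each $Y_e^A$ with $A$, so that all the relevant degrees sit above $\deg_T(A)$), and (ii) still consists of $\Delta^0_2(A)$ sets admitting a uniformly $A$-computable sequence of approximations (which is why joining with the $A$-computable set $A$ does no damage to $\alpha$-c.e.-ness). I do not expect any serious obstacle beyond this.
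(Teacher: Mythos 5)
Your proposal is correct and is exactly the argument the paper intends: the corollary is meant to follow immediately from \Cref{thm:no-list} applied to the uniform listing of $\alpha$-c.e.$(A)$ sets (joined with $A$ so that every set in the list computes $A$), with the hypotheses of \Cref{thm:no-list} supplied by \Cref{them:cond-for-c.e.} just as you argue. The only points needing care are the ones you already handle: joining with $A$ so that every $\alpha$-c.e.$(A)$ degree occurring in $\text{DgSp}^A$ (all of which lie above $\deg_T(A)$) is represented in the list, and presentation-independence of $\alpha$-c.e.-ness on a cone.
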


\begin{proof}[Proof of Theorem \ref{thm:no-list}]
	As usual we work on a cone on which we can compute $\alpha_f$, and assume without loss of generality that all blocks embed into some later block. Suppose $(X_e)_{e \in \omega}$ is a listing as above with $X_{e,s}$ being the limit approximation to $X_e$. We construct a computable copy $\cA$ of ${(\omega, <)}$ via finite stages, satisfying the requirements 
	\[ R_{e, i, j}: \text{either $\Phi_i^{f^{\cA}}\neq X_{e}$ or $\Phi_j^{X_{e}} \neq f^{\cA}$}\]
	The construction is a finite injury priority construction.
	
	The strategy for $R_{e, i, j}$ at stage $s$ is as follows
	\begin{enumerate}
		\item To initialize this requirement choose some interval $[a,b]$ which forms an $f$-block of size greater than one and such that $a$ is greater than the length of the finite linear order $\cA_{s-1}$. Insert new elements at the end of the linear order so that it has length $b$ and restrain the elements in the interval $[a,b]$ for this requirement. Call these elements $l_0, \ldots, l_{b-a}$. We say the requirement is in Phase 0. 
		\item At stage $s$, if the requirement is in Phase 0, say this requirement requires attention if there are computations 
		\[ \Phi_{i, s}^{X_{e, s}}[l_0, \ldots, l_{b-a}] = f^{\cA_s}[l_0, \ldots, l_{b-a}] \quad \text{with use } u_0\]
		and 
		\[ \Psi_{j, s}^{f^{\cA_s}}[0, \ldots, u_0] = X_{e, s}[0, \ldots, u_0] \quad \text{with use } v_0. \]
        
		When this requirement acts, restrain all blocks in $\cA_s$ which contain or are below some element of the use in the ordering $<_{\cA_s}$ other than the block $l_0,\ldots,l_{b-a}$. When a block is restrained, what this means is that $f$ on this block will never change. However, this does not mean that these elements of $\mc{A}$ have a fixed image in $(\omega,<)$: It may happen that the image in $(\omega,<)$ of a restrained block in $\mc{A}$ changes, with the new image being a block into which the old image embeds. The block in $\mc{A}$ becomes possibly larger. These are the only blocks that will be restrained by this requirement. We only need to maintain the values of $f$ on the elements that are in these blocks right now, and not the elements that are added to these blocks later.

        Now insert one element below the block consisting of the elements $l_0, \ldots, l_{b-a}$. Since they formed a block of size greater than one, they no longer form a block. Thus the value of $f$ on these elements has changed.
        
        For each block that was restrained, check to ensure the value of $f$ on these elements is the same as when it was restrained. If not, then insert new elements below the least element of this block and possibly between the elements of this block to move them up to the image of the original $f$-block in some other $f$-block into which it embeds. Further, ensure we add enough new elements on the end to complete this block. This, ensures that for all restrained elements, the value of $f^{\cA_{s+1}}$ on these elements is the same in stage $s$. We say this requirement is in Phase 1. 
		\item At stage $s$, if the requirement is in Phase $2n$ for $n \ge 1$,  say this requirement requires attention if there are computations 
		\[ \Phi_{i, s}^{X_{e, s}}[l_0, \ldots, l_{b-a}] = f^{\cA_s}[l_0, \ldots, l_{b-a}] \quad \text{with use } u_{2n}\]
		and 
		\[ \Psi_{j, s}^{f^{\cA_s}}[0, \ldots, u_{2n}] = X_{e, s}[0, \ldots, u_{2n}] \quad \text{with use } v_{2n} \]

        In Phase $2n$, the elements $l_0, \ldots, l_{b-a}$ will all be part of the same block (and indeed will be closed under $f$). When this requirement acts, insert enough new element below the block to make $l_0, \ldots, l_{b-a}$ now split across two blocks. This changes the value of $f$ on $l_0,\ldots,l_{b-a}$.
		
		For each block that was restrained in Phase 0 check to ensure the value of $f$ on these elements has not changed because of the addition of this new element. If it has, then insert new elements below the least element of this block and possibly between the elements of this block to move them up to the image of the original $f$-block in some other $f$-block into which it embeds. Further, ensure we add enough new elements on the end to complete this block. This ensures that for all restrained blocks, the value of $f^{\cA_{s+1}}$ on these elements at stage $s+1$ is the same as at stage $s$. We say this requirement is in Phase $2n+1$.
		\item At stage $s$, if the requirement is in Phase $2n+1$ for $n \ge 1$,  say this requirement requires attention if there are computations 
		\[ \Phi_{i, s}^{X_{e, s}}[l_0, \ldots, l_{b-a}] = f^{\cA_s}[l_0, \ldots, l_{b-a}] \quad \text{with use } u_{2n+1}\]
		and 
		\[ \Psi_{j, s}^{f^{\cA_s}}[0, \ldots, u_{2n+1}] = X_{e, s}[0, \ldots, u_{2n+1}] \quad \text{with use } v_{2n+1} \]
        During Phase $2n$, $l_0,\ldots,l_{b-a}$ were all part of the same block, while during Phase $2n+1$ they were split across different blocks. No elements have were inserted in between $l_0,\ldots,l_{b-a}$ in transferring from Phase $2n$ to Phase $2n+1$. When this requirement acts, insert enough new elements element below the element $l_0$ and possibly between the $l_0, \ldots, l_{b-a}$ so that they are moved up to the image of the block containing $l_0, \ldots, l_{b-a}$ during Phase $2n$ in some block into which it embeds. This ensures that the value of $f$ on $l_0, \ldots, l_{b-a}$ is the same as it was in Phase $2n$, and hence different from what it was in Phase $2n+1$.

        For each block that was restrained in Phase 0 check to ensure the value of $f$ on these elements has not changed because of the addition of these new elements. If it has, then insert new elements below the least element of this block and possibly between the elements of this block to move them up to the image of the original $f$-block in some other $f$-block into which it embeds. Further, ensure we add enough new elements on the end to complete this block. This ensures that for all restrained blocks, the value of $f^{\cA_{s+1}}$ on these elements at stage $s+1$ is the same as at stage $s$. We say this requirement is in Phase $2n+2$.
	\end{enumerate}
	
	\noindent
	Note that whenever the requirement is acted on we change the value of $f$ on the $l_i$, breaking the computation that was found at that stage and ensuring that the requirement is again satisfied when we move to the next stage. \\
	
	\noindent
	\textbf{Verification}: 
    We use the strategies described above in an injury argument. Since elements are inserted below the $l_i$ corresponding to some requirement only when that requirement or some higher priority requirement requires attention it is enough to check that after a requirement is no longer injured its strategy enters only finitely many phases. This will ensure that this construction produces a copy of ${(\omega, <)}$ and that all requirements are eventually satisfied, i.e., $\cA$ is not equivalent to any of the degrees in the listing.
	
	First, we make the following observations about the construction. Let $s_n$ be the stage where the strategy enters Phase $n$.
	\begin{itemize}
		\item The value of $f^{\cA_{s_{2n}}}$ is the same on the $l_i$ assigned to this strategy for all $n$. 
		\item The value of $f^{\cA_{s_n}}$ on the blocks which were restrained in Phase 0 is the same for all $n$.
	\end{itemize}
	We now show that $X_{e}$ changes each time we move to another phase but maintains the same value on some interval for all even phases. In the proof of Theorem \ref{thm:B} we showed the value of the set we were trying to beat oscillated between two possible values; in this case the set must only have a certain value for even phases but is allowed to vary on odd ones. 
	\begin{lemma} {\ }
		\begin{enumerate}
			\item For all even $n$, $X_{e, s_n}[0, \ldots, u_0] = X_{e, s_0}[0, \ldots, u_0]$
			\item For all $n$, $X_{e, s_n}[0, \ldots, u_0] \neq X_{e, s_{n+1}}[0, \ldots, u_0]$. 
		\end{enumerate}
	\end{lemma}
	\begin{proof}{\ }
		\begin{enumerate}
			\item Observe that for the computation $\Psi_{j,s_0}^{f^{\cA_{s_0}}}[0, \ldots, u_0] = X_{e, s_0}[0, \ldots, u_0]$ its use, except for possibly the $l_i$, is preserved at all stages and so since the value of $f^{\cA_{s_n}}$ is the same for all even $n$, the use of this computation must be preserved at stage $s_n$ for all even $n$ so the original computation must hold, i.e., $\Psi_{j,s_0}^{f^{\cA_{s_0}}}[0, \ldots, u_0] = \Psi_{j,s_0}^{\cA_{s_n}}[0, \ldots, u_n]$ and so it follows that $X_{e, s_n}[0, \ldots, u_0] = X_{e, s_0}[0, \ldots, u_0]$, as desired.

            \item The computation $\Phi_{i, s_0}^{X_{e, s}}[l_0, \ldots, l_{b-a}] = f^{\cA_{s_0}}[l_0, \ldots, l_{b-a}]$ found in Phase 0 has use $u_0$ and so by part (1) holds at stage $s_n$ for all even $n$. For each even $n$, $l_0, \ldots, l_{b-a}$ are part of the same block in $\mc{A}_{s_n}$ and closed under $f$, while for each odd $n$ they are not. Thus for all $n$ we have $f^{A_{s_n}}[l_0, \ldots, l_{b-a}] \neq f^{\cA_{s_{n+1}}}[l_0, \ldots, l_{b-a}]$. Since $\Phi_{i, {s_n}}^{X_{e, s}}[l_0, \ldots, l_{b-a}] = f^{\cA_{s_n}}[l_0, \ldots, l_{b-a}]$ for all $n$ we must have $\Phi_{i, s_n}^{X_{e, s_n}}[l_0, \ldots, l_{b-a}] \neq  \Phi_{i, s_{n+1}}^{X_{e, s_{n+1}}}[l_0, \ldots, l_{b-a}]$. Hence, the use of the computation must change between stage $s_n$ and $s_{n+1}$. Further, since either $n$ or $n+1$ is even, the use of one of these computations is $u_0$ and so we must have $X_{e, s_n}[0, \ldots, u_0] \neq X_{e, s_{n+1}}[0, \ldots, u_0]$, as desired.\qedhere
		\end{enumerate}
	\end{proof}
	
	\noindent
	This lemma tells us that some element in $[0, \ldots, u_0]$ must change value each time the strategy enters a new phase. Now, since $X$ is a $\Delta_2^0$ set, it follows that each finite collection of elements can change only finitely often so this strategy can enter only finitely many stages.  
\end{proof}

Despite the apparent difficulty in describing these intermediate degree spectra one way we can attempt to characterize them is by the least computable ordinal $\alpha$ such that their degree spectra contains all $\alpha$-c.e. degrees. (Recall that by Remark \ref{remark:wd} this is independent of the presentations chosen for the ordinals.) To do this we introduce the notion of coding trees to better understand which types of finite coding sequences exist for a given function $f$. To a given $f$ there will be associated two trees, a maximal coding tree and a minimal coding tree. The rank of the maximal coding tree will help to determine whether there is an $\alpha$-c.e.\ degree not contained in the degree spectrum, and the rank of the minimal coding tree will help to determine whether every $\alpha$-c.e.\ degree is contained in the degree spectrum.

The reason that we will need two different trees is that there are actually two ways in which we might want to be able to move coding elements in a linear order. The first, which was previously mentioned, is the ability to move the coding elements back and forth between two possible values of $f$ to account for changes in a $\Delta_2^0$ set. The second, which was not as apparent in the case where we had an infinite coding sequence, is the ability to preserve the value of $f$ on the coding sequence while other elements are inserted below them, i.e., the value of $f$ on other coding sequences is changing. So, the rank of the maximal coding tree captures which types of moves are possible when coding only a single element, which is enough for diagonalization, while the minimal coding tree captures which types of moves are possible while ensuring that coding sequences can move independently of each other, which is required for encoding a particular set. However, we will see below that the ranks of the minimal and maximal coding trees only provide an upper and lower bound on the $\alpha$ such that all $\alpha$-c.e. sets can be coded but it can be much more complicated to determine exactly which sets are in the degree spectrum.

We recall the rank of a tree.

\begin{definition}
	Let $T$ be a well-founded tree. Given $\sigma \in T$, the rank of $\sigma$ is defined by:
	\begin{enumerate}
		\item $\rank(\sigma) = 0$ if $\sigma$ is a leaf.
		\item $\rank(\sigma) = \sup\; \{\rank(\tau) + 1 \mid \text{$\tau$ child of $\sigma$}\}$.
	\end{enumerate}
	We define $\rank(T)$ to be the rank of the root node of $T$, or $\infty$ if $T$ is non-well-founded.
\end{definition}

\begin{definition}
	Let $f$ be a block function on ${(\omega,<)}$. We define its \textit{maximal coding tree} as follows. The elements of the tree will be all finite weak coding sequences, with the empty sequence as the root. Extension on the tree is just extension of coding sequences (including using the same functions $\vp$.) We call this tree $T_{\max}(f)$ and let the \textit{maximal coding rank} of $f$ be $\maxrank(f) = \rank(T_{\max}(f))$.
\end{definition} 


In Theorem \ref{thm:B} we showed that if a block function does not have an infinite $f$-coding sequence, then its degree spectrum on a cone is not all of the $\Delta^0_2$ degrees. Here we refine that using the maximal rank of a coding tree to say more about which $\Delta^0_2$ degree is not in the degree spectrum.

\begin{theorem} \label{thm:max-tree}
Let $f$ be a block function such that every block, except for finitely many, embeds into some later block. If $\alpha$ is the rank of the maximal coding tree for $f$, then on a cone there is some $\alpha$-c.e. degree which is not in the degree spectrum of $f$.
\end{theorem}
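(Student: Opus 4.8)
The plan is to mimic the diagonalization in the converse direction of \Cref{thm:all-d2} and in \Cref{thm:no-list}, but to carry the bookkeeping using the ordinal $\alpha = \maxrank(f)$ rather than just iterating phases. Recall that in those proofs, whenever a strategy $R_{e,i,j}$ passes through a new phase it produces one more interval in a weak coding sequence, and an infinite run of phases builds an infinite weak coding sequence; here instead we will build an $\alpha$-c.e.\ set $C$ so that no computable copy $\cL$ of ${(\omega,<)}$ has $f^{\cL} \equiv_T C$. The point is that each requirement should be allowed to act, and change its coding element, only as many times as the rank of the coding tree permits the corresponding branch to descend: that is, as the strategy runs, it walks down $T_{\max}(f)$ along the weak coding sequence it is generating, and since $T_{\max}(f)$ has rank $\alpha$, it can only descend $\alpha$-many times before the branch must terminate. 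This is exactly the data of an $\alpha$-c.e.\ approximation: the counting function $r(x,s)$ for the coding location $x$ assigned to a requirement is set to the rank, in $T_{\max}(f)$, of the current weak coding sequence being built by that requirement, which strictly decreases each time the requirement acts (since extending the coding sequence passes to a child, of strictly smaller rank), and which starts at $\alpha$.

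First I would set up the requirements $R_{e,i,j}$: ``if $\cL_e \cong {(\omega,<)}$ then either $\Phi_i^{f^{\cL_e}} \neq C$ or $\Psi_j^C \neq f^{\cL_e}$,'' exactly as in \Cref{thm:all-d2}, and run a finite-injury priority construction building a computable copy... no, building the set $C$ directly together with a listing witnessing it is $\alpha$-c.e. Each requirement is assigned a fresh location $x$ (with $C(x)$ initially $0$, $r(x,0)=\alpha$), and when it requires attention — i.e.\ when it sees matching computations $\Phi_i^{f^{\cL_{e,s}}}(x) = C_s(x)$ with use $u$ and $\Psi_j^{C_s} \upharpoonright m = f^{\cL_{e,s}} \upharpoonright m$ with use $v$, where $m$ is large enough to capture all blocks meeting $[0,\ldots,u]$ in $\cL_{e,s}$ — it restrains $C \upharpoonright v$, flips $C(x)$, and records the new interval $[a_k,b_k]$ of ${(\omega,<)}$ (the $\pi_s$-image of the relevant initial segment of $\cL_{e,s}$, together with the blocks it meets) as the next term of its weak coding sequence $\vec\sigma$, with the connecting map $\varphi = \pi_{\text{new}} \circ \pi_{\text{old}}^{-1}$. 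As in \Cref{lem:stages}, the restraint forces that on elements of the same parity the relevant portion of $f^{\cL_e}$ is unchanged while on consecutive phases some element of $f^{\cL_e} \upharpoonright u$ changes; this is precisely the assertion that $\varphi$ is not $f$-preserving while $\varphi_{\text{prev}}\circ\varphi$, between terms of equal parity, is — so $\vec\sigma$ really is a weak coding sequence, hence an element of $T_{\max}(f)$, and each action passes to a strict extension. Set $r(x,s)$ at each stage to $\rank_{T_{\max}(f)}(\vec\sigma)$ at that stage; this is nonincreasing and drops strictly whenever the requirement acts (and $g(x,s) = C_s(x)$ changes only when the requirement acts), giving conditions (3)–(4) of the $\alpha$-c.e.\ definition, with (1)–(2) holding by initialization and (5) by construction. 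Injury only resets lower-priority requirements, as usual.

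The verification that each requirement acts finitely often and is met is then almost immediate from well-foundedness: after a requirement stops being injured, each of its actions strictly decreases $\rank_{T_{\max}(f)}$ of its current weak coding sequence, and ranks are ordinals, so it acts only finitely often; if it gets stuck in some phase forever then as in \Cref{thm:all-d2} the requirement is satisfied (either $\cL_e \not\cong {(\omega,<)}$, or one of the two computations fails to converge to the right value, or stabilizes wrong). So $C$ is $\Delta^0_2$, indeed $\alpha$-c.e.\ via $(g,r)$, and no $\cL_e$ witnesses $f^{\cL_e} \equiv_T C$; since by \Cref{thm:d-free} (or rather since we already know — the hypothesis that all but finitely many blocks embed later is in force) the degree spectrum is contained in the $\Delta^0_2$ degrees and we have produced an $\alpha$-c.e.\ degree, namely $\deg_T(C)$, not in it. (One must double-check, using that every block embeds into a later block, that the construction can always insert padding to keep $f^{\cL}$ — wait, we are not building $\cL$; this subtlety does not arise here, the $\cL_e$ are given.)

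The main obstacle I expect is making precise, and correct, the claim that the object a runaway strategy builds is genuinely a branch through $T_{\max}(f)$ with the \emph{same} connecting maps $\varphi$ at each level, so that ``passing to a child'' really does strictly decrease rank — in particular one must be careful that when a strategy extends $[a_k,b_k]$ to the next term $[a_\ell,b_\ell]$, the composite map it records agrees with composing the recorded $\varphi$'s, and that new elements inserted below by \emph{higher}-priority requirements (before injury stops) don't corrupt the already-committed portion of the sequence; this is handled by the restraints exactly as in \Cref{lem:stages}, but it needs to be stated carefully. A secondary point is to confirm that the rank function $r(x,s) = \rank_{T_{\max}(f)}(\vec\sigma_s)$ is computable: this holds because we are below the cone where $\alpha_f$ is computable, $T_{\max}(f)$ is then a computable well-founded tree, and we only ever need its rank at the finitely many nodes actually visited, each of which is a finite object — so $r$ is computable, as required by the definition of $\alpha$-c.e.
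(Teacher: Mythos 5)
Your proposal is correct and takes essentially the same route as the paper: run the diagonalization of the converse direction of \Cref{thm:all-d2} unchanged, attach to each coding location $x$ the weak coding sequence the associated strategy builds, and set the $\alpha$-c.e.\ counting function $r(x,s)$ to its rank in $T_{\max}(f)$, so that $r$ strictly decreases exactly when $C(x)$ flips. The subtleties you flag (that the recorded maps genuinely give a node of $T_{\max}(f)$, that the rank is computable on the cone) are handled in the paper exactly as you anticipate, by appeal to \Cref{lem:stages} and by putting $\alpha_f$, a presentation of $\alpha$, $T_{\max}(f)$, and its ranking function below the cone.
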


\begin{proof}
We work on a cone on which we can compute the usual facts about $f$ as well as a given presentation for $\alpha$ and the maximal coding tree for $f$ together with the ranking function on that tree using that presentation of $\alpha$ for the ranks. The construction here is similar to the construction in \Cref{thm:B}. There, we constructed a $\Delta^0_2$ set $C$ while diagonalizing against being Turing equivalent to $f^{\mc{A}_e}$ for copies $\mc{A}_e$ of ${(\omega,<)}$. We had requirements
\[ R_{e, i, j}: \text{If $\cL_e \cong {(\omega,<)}$, then either $\Phi_i^{f^{\cL_e}}\neq C$ or $\Psi_j^C \neq f^{\cL_e}$}.\]
For each requirement, we chose an element $x$ for which we would change $C(x)$. The fact that there was no infinite $f$-coding sequence meant that eventually each $R_{e, i, j}$ would be satisfied and $C(x)$ would no longer change; more precisely, we would only change $C(x)$ when we were able to extend the (weak) $f$-coding sequence that we were building.

The construction now will be exactly the same, except that we will add a rank function $r(x,s) \colon \omega^2 \to \alpha + 1$. Given $x$, we begin with $r(x,0) = \alpha$. We also maintain at each stage a weak coding sequence $\mathsf{CS}_{x,s}$ with rank in $T_{\max}(f)$ of $\rank(\mathsf{CS}_{x,s}) = r(x,s)$. We begin with $\mathsf{CS}_{x,0}$ as the empty coding sequence, with rank $\alpha = r(x,s)$ in $T_{\max}(f)$. If $x$ is never assigned to a requirement, then $C(x)$ never changes, and we never decrease $r(x,s)$. Otherwise, we do not change $r(x,s)$ and $\mathsf{CS}_{x,s}$ until after $x$ has been assigned to some requirement $R_{e, i, j}$, and then we will only have $r(x,s) < r(x,s-1)$ at stages $s$ when $R_{e, i, j}$ changes from one phase to the next; these are also the only stages when $C_s(x) \neq C_{s-1}(x)$. The weak coding sequence $\mathsf{CS}_{x,s}$ will be the sequence built by $R_{e,i,j}$ in Theorem \ref{thm:all-d2}. Suppose that we have defined $r(x,s-1)$ and $\mathsf{CS}_{x,s-1}$ and that at stage $s$ the requirement $R_{e, i, j}$ changes from Phase $n$ to Phase $n+1$. The weak coding sequence $\mathsf{CS}_{x,s-1}$ will have been the sequence of length $n$ constructed at Phase $n$. In moving to Phase $n+1$, we extend this weak coding sequence to a sequence $\mathsf{CS}_{x,s}$ of length $n+1$ as described in the proof of Theorem \ref{thm:all-d2}. Define $r(x,s)$ to be the rank of $\mathsf{CS}_{x,s}$ in the maximal coding tree $T_{\max}(f)$. If, at any point, the requirement $R_{e,i,j}$ is injured, then we never again change $C(x)$ and so never again have to decrease $r(x,s)$. Thus we have shown that the set $C$ built is $\alpha$-c.e., proving the theorem.
\end{proof}

Note that the condition given in this theorem is sufficient but not necessary. Indeed, suppose that $f_k$ is the function obtained from $f$ by ``removing'' the first $k$ blocks of $f$. Then $f$ and $f_k$ have the same degree spectra, but it is possible that $\maxrank(f_k) < \maxrank(f)$.

\begin{example}
Consider the example $f$ from Theorem \ref{ex:int-fun}. There, we showed that any $f$-coding sequence has length at most $5$. Thus the rank of the maximal coding tree for $f$ is at most $6$ (since in the rank of the coding tree we include the empty sequence). Thus by Theorem \ref{thm:max-tree} on a cone there is a $6$-c.e.\ degree which is not in the degree spectrum of that function $f$. Since in that example we can carry out the argument of Theorem \ref{thm:max-tree} computably, not working on a cone, there is a 6-c.e.\ degree which is not in the degree spectrum of $f$.
\end{example}



We now turn to a condition for the reverse, that is, a coding tree that will allow us to code all $\alpha$-c.e.\ sets. This is somewhat more complicated.

\begin{definition}
Let $f$ be a block function on ${(\omega,<)}$. Suppose that (a) all but finitely many blocks embed into infinitely many later blocks, or equivalently, the degree spectrum of $f$ strictly contains the c.e.\ degrees on a cone, and (b) there is no infinite $f$-coding sequence, or equivalently the degree spectrum of $f$ is strictly contained within the $\Delta^0_2$ degrees on a cone.

First, we need an additional definition. Given two coding sequences $\sigma$ and $\tau$ which agree on all but the last interval, which are, say, $[a_n, b_n]$ for $\sigma$ and $[a_n',b_n']$ for $\tau$, we say that $\sigma$ permits $\tau$ if $a_n < a_n'$  and there is a nondecreasing embedding $\psi$ from $[a_n,b_n] \to [a_n',b_n']$ which preserves $f$ and commutes with the maps $\varphi_{n-1} \colon [a_{n-1},b_{n-1}] \to [a_n,b_n]$ and $\varphi_{n-1}' \colon [a_{n-1},b_{n-1}] \to [a_n',b_n']$.
\[ \xymatrix@R=15mm@C=15mm{\cdots\ar[r] & [a_{n-2},b_{n-2}] \ar[r]^{\varphi_{n-2}} & [a_{n-1},b_{n-1}]\ar[r]^{\varphi_{n-1}} \ar[dr]_{\varphi_{n-1}'} & [a_n,b_n] \ar[d]^{\psi} \\ &&& [a_n',b_n']}.\]
The fact that each $f$-block embeds into infinitely many later blocks implies that for any coding sequence $\sigma_0$ there is an infinite sequence $\sigma = \sigma_0, \sigma_1, \sigma_2, \ldots$ such that each $\sigma_i$ permits $\sigma_{i+1}$.

We define the \textit{minimal coding tree} $T_{\min}(f)$ of $f$ as follows. The elements are all finite coding sequences, with the empty sequence as the root. For the maximal coding tree, we used weak coding sequences, but here we use strong coding sequences. Extension on the tree is just extension of coding sequences (using the same functions $\varphi$). Since there is no infinite $f$-coding sequence this tree is well-founded.

Define, inductively, a ranking $\minrank$ on $T_{\min}(f)$. For a leaf $\sigma$, we put $\minrank(\sigma) = 0$. Given a non-leaf $\sigma \neq \varnothing$, we set $\minrank(\sigma) \geq \alpha$ if there is an infinite sequence $\sigma = \sigma_0, \sigma_1, \sigma_2, \ldots$ such that each $\sigma_i$ permits $\sigma_{i+1}$ and for each $\beta < \alpha$ and each $i = 0,1,2,\ldots$ there is $\tau_i$ a child of $\sigma_i$ such that $\minrank(\tau_i) \geq \beta$. Then as usual we set $\minrank(\sigma)$ to be the greatest $\alpha$ such that $\minrank(\sigma) \geq \alpha$. We put $\minrank(\varnothing) = \sup_{\sigma \neq \varnothing} \minrank(\sigma)+1$.


For each non-root-node $\sigma \neq \varnothing$, there is an infinite sequence $\sigma = \sigma_0,\sigma_1,\sigma_2,\ldots$ such that each $\sigma_i$ permits $\sigma_{i+1}$ and for each $i$ $\minrank(\sigma_i) \geq \minrank(\sigma)$. If $\minrank(\sigma) > 0$ this can be seen directly from the definition of $\minrank$. If $\minrank(\sigma) = 0$ this is due to the fact noted above that because each $f$-block embeds into infinitely many later blocks, for any coding sequence $\sigma_0$ there is an infinite sequence $\sigma = \sigma_0, \sigma_1, \sigma_2, \ldots$ such that each $\sigma_i$ permits $\sigma_{i+1}$.)

We define $\minrank(f)$ to be $\minrank(\varnothing)$ the empty coding sequence which is the root node of $T_{\min}(f)$.
\end{definition}

\begin{theorem} \label{thm:min-tree}
Let $f$ be a block function such that every block, except for finitely many, embeds into some later block. If $\alpha = \minrank(f)$ is the rank of the minimal coding tree corresponding to $f$, then the degree spectrum of $f$ contains all $\beta$-c.e. degrees for any $\beta < \alpha$ on a cone.
\end{theorem}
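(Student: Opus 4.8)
The plan is to mimic the forward direction of \Cref{thm:all-d2}, where we coded an arbitrary $\Delta^0_2$ set using an infinite coding sequence, but now using the minimal coding tree in place of an infinite coding sequence. Fix $\beta < \alpha = \minrank(f)$ and a $\beta$-c.e.\ set $X$ with computable approximation $g(x,s)$ and computable counting function $r \colon \omega^2 \to \beta+1$. We work on a cone on which $\alpha_f$, the minimal coding tree $T_{\min}(f)$, and its ranking function $\minrank$ (with respect to a fixed presentation of $\alpha+1$) are all computable, and on which every block embeds into infinitely many later blocks. We build a computable copy $\mc{A}$ of ${(\omega,<)}$ with $f^{\mc{A}} \equiv_T X$. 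As in \Cref{thm:all-d2}, for each $e$ we introduce a coding segment $\bar{v}_e$ whose initial coding elements $\bar{u}_e$ record whether $e \in X$, and the order on $\mc{A}$ has the coding segments for smaller $e$ to the left; padding elements fill the gaps and are never disturbed in the value of $f$ because every block embeds into infinitely many later blocks.

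The key change is that each coding segment $\bar{v}_e$ now carries, at each stage $s$, a finite (strong) coding sequence $\mathsf{CS}_{e,s}$ which is a node of $T_{\min}(f)$, with $\minrank(\mathsf{CS}_{e,s})$ bounded by (a value tied to) $r(e,s)$; when $g(e,s)$ changes we must move $\bar{v}_e$ to the next interval of the appropriate parity, which corresponds to extending $\mathsf{CS}_{e,s}$ by one more interval $\varphi_{k\mapsto \ell}$, strictly decreasing $\minrank$. The subtle point is that when some $e' < e$ changes, we must insert elements below $\bar{v}_e$ without changing the value of $f$ on $\bar{v}_e$ and without altering the status (in $X$) that $\bar{v}_e$ currently records; this is exactly what the ``permits'' relation is for. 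The infinite sequence $\sigma_0,\sigma_1,\sigma_2,\ldots$ with each $\sigma_i$ permitting $\sigma_{i+1}$ and $\minrank(\sigma_i) \geq \minrank(\sigma)$, guaranteed by the definition of $T_{\min}(f)$, lets us absorb arbitrarily many such insertions from the left while keeping the rank of the coding sequence attached to $\bar{v}_e$ from going down. So: insertions from the left for the sake of $e' < e$ move $\mathsf{CS}_{e,s}$ along a ``permits'' chain (rank nonincreasing), while a genuine change of $g(e,s)$ moves to a child with strictly smaller rank (matching the drop in $r(e,s)$).

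I would organize the construction exactly as in \Cref{thm:all-d2}: stage $s$, process $\bar{v}_0, \bar{p}_1, \bar{v}_1, \bar{p}_2, \ldots$ in $\mc{A}$-increasing order, at each padding block inserting elements to move each of its sub-blocks up into an embedded copy (so $f^{\mc{A}_{s+1}}$ agrees with $f^{\mc{A}_s}$ on old padding), and at each coding segment $\bar{v}_e$ first accounting for elements already inserted below it (move $\mathsf{CS}_{e,s}$ along its permits-chain to a $\sigma_i$ whose first interval is large enough) and then, if $g(e,s{+}1)\neq g(e,s)$, extending $\mathsf{CS}_{e,s}$ to a child of the correct parity, simultaneously absorbing the newly inserted elements into $\bar{v}_e[s{+}1]$. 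The verification that $\mc{A} \cong {(\omega,<)}$ is the same finite-injury-style counting argument: only finitely many $e' \le e$ ever change $g$, so only finitely many insertions ever happen below any fixed element. The reductions $f^{\mc{A}} \ge_T X$ (read $X(e)$ off the value of $f$ on $\bar{u}_e$) and $X \ge_T f^{\mc{A}}$ (from $X$, run the construction to the stage when an element appears, then use that it is either padding—fixed forever—or a coding element whose final $f$-value depends only on $\lim_s g(e,s) = X(e)$) go through as the analogue of \Cref{lem:comp-of-maps}.

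The main obstacle is the bookkeeping in the ``permits'' step: I must check that, after processing all coding segments and padding blocks to the left of $\bar{v}_e$ at a given stage, the total number of elements inserted below $\bar{v}_e$ is bounded enough that some $\sigma_i$ in the infinite permits-chain (of rank $\geq$ the current rank) has its first interval $[a^{(i)}_{n},b^{(i)}_n]$ starting past all those new elements — this is the analogue of property (5) of a coding sequence, which here is replaced by the fact that a permits-chain can be chosen to push the starting point of the last interval arbitrarily high (since $a_n < a_n'$ in the definition of ``permits'', and the chain is infinite). I also need the ranking inequality to line up: the cleanest way is to maintain the invariant that at stage $s$ the rank $\minrank(\mathsf{CS}_{e,s})$ is at least as large as $r(e,s)$ translated into the ranking of $T_{\min}(f)$, using $\minrank(\varnothing) = \sup_{\sigma\neq\varnothing}\minrank(\sigma)+1 \geq \alpha > \beta$ for the initial segment, and checking that a single change of $g(e,\cdot)$ can be matched by passing to a child of strictly smaller rank while a ``permits'' move keeps the rank fixed. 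Making that translation precise — and confirming that the permits-moves genuinely preserve both the $f$-values on $\bar{v}_e$ and the parity (hence the recorded membership bit) — is where essentially all the content lies; everything else is a routine adaptation of the proof of \Cref{thm:all-d2}.
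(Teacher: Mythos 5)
Your proposal is correct and follows essentially the same approach as the paper: attach to each coding segment $\bar{v}_e$ a node $\mathsf{CS}_{e,s}$ of $T_{\min}(f)$ with $\minrank(\mathsf{CS}_{e,s}) \geq r(e,s)$, extend to a child (strictly decreasing rank) when $g(e,\cdot)$ changes, and move laterally along a permits-chain (rank non-decreasing, last interval pushed arbitrarily far right) to absorb insertions caused by smaller $e'$, with the verification structured as in \Cref{thm:all-d2}. The one small slip is that you wrote ``first interval'' where you mean the \emph{last} interval $[a_n,b_n]$ of the coding sequence — it is $a_n$ that must grow along the permits-chain — but your indexing makes clear that you intend the last interval, so this is only a wording issue.
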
 

\begin{proof}[Proof of Theorem \ref{thm:min-tree}]
We work on the cone above $\alpha_f$ and the minimal coding tree and its ranking function into a presentation of $\alpha$. Suppose $X$ is an $\beta$-c.e.\ set, with $\beta < \alpha$, given by functions $g$, $r$ where $g$ is the computable approximation of $X$ and $r$ is ranking function counting the number of mind changes. We construct a computable copy $\cA$ of ${(\omega, <)}$ such that $f^{\mc{A}}$ is Turing equivalent to $X$. The construction will be almost exactly the same as \Cref{thm:A} but we will be much more careful in how we use our coding sequences. In that construction, we had at each stage $s$ certain elements $\bar{v}_e[s]$ of $\mc{A}_s$ which we used to code the value of $X(e)$. In particular, $\bar{v}_e[s]$ corresponded, via the partial isomorphism $\pi_s \colon \mc{A}_s \to {(\omega,<)}$, to an interval $[a_i,b_i]$ in a coding sequence. In \Cref{thm:all-d2} these intervals $[a_i,b_i]$ were in a fixed infinite $f$-coding sequence. Now we work within the minimal coding tree. At each stage $s$, we will have a coding sequence $\mathsf{CS}_{e,s} \in T_{\min}(f)$ with $\minrank(\mathsf{CS}_{e,s}) \geq r(e,s)$. The length of $\mathsf{CS}_{e,s}$ will be equal to 1 more than the number of times $X(e)$ has changed value before stage $s$,
\[ \text{length}(\mathsf{CS}_{e,s}) = 1 + \#\{t < s \; \mid \; X_t(e) \neq X_{t+1}(e) \}.\]
Suppose that $\mathsf{CS}_{e,s}$ consists of intervals $[a_1,b_1],\ldots,[a_n,b_n]$ with $n= 1 + \#\{t < s \; \mid \; X_t(e) \neq X_{t+1}(e) \}$. Then we will have $\pi_s(\bar{v}_e[s]) = [a_n,b_n]$. 
If at stage $s+1$ we see that $X_{s+1}(e) \neq X_s(e)$ then we know that $r(e,s+1) < r(e,s)$ and so we can choose an extension $\mathsf{CS}_{e,s+1}$ of $\mathsf{CS}_{e,s}$ with $\minrank(\mathsf{CS}_{e,s+1}) \geq r(e,s+1)$ which adds one more interval $[a_{n+1},b_{n+1}]$. We will add elements to $\mc{A}$ so that at stage $s+1$, $\bar{v}_e[s]$ corresponds to the images in $[a_{n+1},b_{n+1}]$, under $\varphi_{n}$, of $[a_n,b_n]$:
\[ \pi_{s+1}(\bar{v}_e[s]) = \varphi_n(\pi_s(\bar{v}_e[s])). \]
The other possibility is that at stage $s+1$ we have $X_{s+1}(e) = X_s(e)$ but some elements have already been added to $\mc{A}$ below $\bar{v}_e[s]$ for the sake of some $e' < e$. In \Cref{thm:all-d2} we had an infinite $f$-coding sequence and so we had enough room to just move along the sequence to another position $[a_j,b_j]$ with $j$ of the same parity as $i$. Now, we have limited room; instead, we use the permitting aspect of the tree. There is some other coding sequence $\mathsf{CS}_{e,s+1}$ which is of the same rank and length as $\mathsf{CS}_{e,s}$, which agrees with $\mathsf{CS}_{e,s}$ except for the last interval $[a_n',b_n']$, and which is permitted by $\mathsf{CS}_{e,s}$. There is a non-decreasing and $f$-preserving map $\psi\colon[a_n,b_n] \to [a_n',b_n']$. Moreover, we can choose such a coding sequence with $a_n'$ large enough that we can insert elements into $\mc{A}$ so that
\[ \pi_{s+1}(\bar{v}_e[s]) = \psi(\pi_s(\bar{v}_e[s])). \]
This allows us to adjust $\mc{A}$ for $e$ without decreasing the rank of the coding sequence. Thus it is not the case that the coding sequences $\mathsf{CS}_{e,s}$ for $e$ are just growing by extensions; rather, there are also lateral moves when the last interval in $\mathsf{CS}_{e,s}$ changes. Having given the informal idea of the construction, we will now give the formal construction.

\medskip

\noindent \textbf{Construction}: 
As in \Cref{thm:all-d2}: For each $e$, starting from stage $e$ where they are defined, we will have the initial coding elements $\bar{u}_e \in \mc{A}$, the full coding segment $\bar{v}_e \in \mc{A}$ containing $\bar{u}_e$, and a restraint $r_e = \max_{\mc{A}} \bar{v}_e \in \mc{A}$ which is the $\mc{A}$-largest element of $\bar{v}$. We write each tuple in $\mc{A}$-increasing order. The initial coding elements $\bar{u}_e$ will never change once defined, while the coding segment $\bar{v}_e$ may have more elements added, and the restraint $r_e$ may increase in the $\mc{A}$ order. We will use $\bar{v}_e[s]$ and $r_e[s]$ to refer to the values at stage $s$. We will always have that if $e' \leq e$ then $\bar{v}_e > r_{e'}$ so that the coding segment for $e$ is free of the restraint of $e'$.

At each stage $s$ for each $e$ we will also have $n_e[s]$ which will be one more than the number of changes\footnote{It will be helpful to assume that $X_s(e) = 0$ for all $s \leq e+1$.} in $X(e)$,
\[ n_e[s] = 1 + \# \{ t < s \; \mid \; X_t(e) \neq X_{t+1}(e) \}\]
and a coding sequence $\mathsf{CS}_{e,s}$ of length $n_e[s]$ and $\minrank(\mathsf{CS}_{e,s}) \geq r(e,s)$. We will generally write $[a_1,b_1],\ldots,[a_n,b_n]$ and $\varphi_1,\ldots,\varphi_{n-1}$ for the corresponding intervals and maps, taking $n = n_e[s]$ and suppressing $e$ and $s$ for simplicity of notation. Note that $n_e[s]$ will be odd if and only if $X_s(e) = 0$ and even if and only if $X_s(e) = 1$.

Recall that at each stage $s$, given a tuple $\bar{a} \in \mc{A}_s$, $\pi_s(\bar{a})$ is the corresponding tuple in ${(\omega,<)}$. We also have $f^{\mc{A}_s}$ which is the function $f$ on $\mc{A}_s$.

From each stage to the next, we will maintain the following properties:
\begin{enumerate}
	\item At each stage $s \geq e$, the coding elements $\bar{v}_e$ of $\mc{A}$ correspond to $\pi_s(\bar{v}_e)$ in ${(\omega,<)}$ which is the last interval $[a_n,b_n]$ in $\mathsf{CS}_{e,s}$.
	\item Suppose that at stage $s \geq e$ the coding elements $\bar{v}_e[s]$ corresponded (via $\pi_s \colon \mc{A}_s \to {(\omega,<)}$) to the interval $[a_n,b_n]$ in the coding sequence $\mathsf{CS}_{e,s}$. At stage $s+1$:
	\begin{enumerate}
		\item if $X_{s+1}(e) \neq X_s(e)$, then at stage $s+1$ the coding elements $\bar{v}[s+1]$ correspond (via $\pi_{s+1} \colon \mc{A}_{s+1} \to {(\omega,<)}$) to the interval $[a_{n+1},b_{n+1}]$ from the coding sequence $\mathsf{CS}_{e,s+1}$ which extends $\mathsf{CS}_{e,s}$ by this interval. The coding elements $\bar{v}_e[s]$ at stage $s$ correspond at stage $s+1$ to the images, in the commutative diagram, of the elements of ${(\omega,<)}$ that they corresponded to at stage $s$. That is,
		\[ \pi_{s+1}(\bar{v}_e[s]) = \varphi_{i \mapsto j} (\pi_s(\bar{v}_e[s])).\]
		\item if $X_{s+1}(e) = X_s(e)$ then at stage $s+1$ the coding elements $\bar{v}[s+1]$ correspond (via $\pi_{s+1} \colon \mc{A}_s \to {(\omega,<)}$) to an interval $[a_{n}',b_{n}']$ which is the last interval of the coding sequence $\mathsf{CS}_{e,s+1}$ of the same length as $\mathsf{CS}_{e,s}$, with $\minrank(\mathsf{CS}_{e,s+1}) \geq \minrank(\mathsf{CS}_{e,s})$, and such that $\mathsf{CS}_{e,s}$ permits $\mathsf{CS}_{e,s+1}$. Let $\psi$ be the map witnessing this. The coding elements $\bar{v}_e[s]$ at stage $s$ correspond at stage $s+1$ to the images of the elements of ${(\omega,<)}$ that they corresponded to at stage $s$. That is,
		\[ \pi_{s+1}(\bar{v}_e[s]) = \psi (\pi_s(\bar{v}_e[s])).\]
	\end{enumerate}
	\item If, at stage $s$ an element $a \in \mc{A}$ is not a coding element (i.e., not part of any $\bar{v}_e[s]$), then at stage $s+1$ it is still not a coding element, and $f^{\mc{A}_s}(a) = f^{\mc{A}_{s+1}}(a)$.
	\item From stage $s \geq e$ to stage $s+1$, elements can only be added to $\mc{A}$ below $r_e$ if there is $e' \leq e$ which entered or left $X$ at stage $s+1$, i.e., with $e' \in X_{s} \triangle X_{s+1}$. Thus, if $X_{s+1} \Eres_{e} = X_s \Eres_e$, then the partial isomorphism $\pi_s \colon \mc{A}_s \to {(\omega,<)}$ does not change at stage $s+1$ on elements below $r_e$.
\end{enumerate}

\smallskip

\noindent\textit{Stage $s$:} At this stage in the construction our partial linear order $\mc{A}_s$ can be partitioned into a finite number of coding segments and padding blocks, each of which form an interval:
\[ \bar{p}_0 <_{\mc{A}} \bar{v}_0[s] <_{\mc{A}} \bar{p}_1 <_{\mc{A}} \bar{v}_1[s] <_{\mc{A}} \cdots .\]
We add new elements to $\mc{A}_s$ to code $X_{s+1}$.

Starting with $\bar{v}_0[s]$, and continuing in order with $\bar{p}_1$, $\bar{v}_1[s]$, and so on, we ensure that each of these segments still satisfy our requirements. However we must now take into account the fact that we may have already inserted elements.
\begin{itemize}
	\item Given a padding segment $\bar{p}_i$, if no elements have been inserted below this segment, then we do not have to do anything. If there have been other elements, then we must act as follows. Consider each set of elements which corresponded at stage $s$ to a block. We must make sure that those same elements correspond in $\mc{A}_{s+1}$ to a block of the same type. For each block making up $\bar{p}_i$, insert new padding elements below the least element of this block and possibly between the elements of this block to move them up to the image of the original $f$-block in some other $f$-block into which it embeds. Further, ensure we add enough padding elements on the end to complete this block. This ensures that for all padding elements present at stage $s$, the value of $f^{\cA_s}$ on these elements is the same as at stage $s+1$. 
	\item Given $\bar{v}_e[s]$, the coding segment corresponding to $e$, first check the value of $X_{s+1}(e)$. We have two cases.
	
	\begin{itemize}
		\item If $X_{s+1}(e) \neq X_s(e)$, then $r(e,s+1) < r(e,s)$. Choose a coding sequence $\mathsf{CS}_{e,s+1}$ extending $\mathsf{CS}_{e,s}$ with $\minrank(\mathsf{CS}_{e,s+1}) \geq r(e,s+1)$, which we can do as $r(e,s+1) < r(e,s) \leq \minrank(\mathsf{CS}_{e,s})$. Let $[a_{n+1},b_{n+1}]$ be the last interval in $\minrank(\mathsf{CS}_{e,s+1})$. We can choose $\mathsf{CS}_{e,s+1}$ such that $a_{n+1}$ is sufficiently large that, by adding elements between and below the elements of $\bar{v}_e[s]$, we can have the segment $\bar{v}_e[s]$ corresponds to the image, under $\varphi_{k \mapsto \ell}$, of $[a_k,b_k] = \pi_s(\bar{v}_e[s])$. Any new elements that were inserted and end up in the new interval are added to the collection of coding elements $\bar{v}_e[s+1]$ corresponding to $e$, otherwise the newly added elements are padding elements. Further, we ensure that enough new coding elements are after the final element to ensure we complete the entire interval, putting these elements in $\bar{v}_e[s+1]$ as well. Thus we have
		\[ \pi_{s+1}(\bar{v}_e[s]) = \varphi_{i \mapsto j} (\pi_s(\bar{v}_e[s])) \]
		and
		\[ \pi_{s+1}(\bar{v}_e[s+1]) = [a_{n+1},b_{n+1}].\]
		
		\item If $X_{s+1}(e) = X_s(e)$, then we first ask whether some element has already been inserted below $\bar{v}_e[s]$. If not, then we do not insert any further elements. If so, say $k$ elements have been inserted below $\bar{v}_e[s]$, then we act as follows. We have at this stage a coding sequence $\mathsf{CS}_{e,s}$ with last interval $[a_n,b_n]$. Look for a coding sequence $\mathsf{CS}_{e,s+1}$ of the same length and rank which is permitted by $\mathsf{CS}_{e,s}$, and which has last interval $[a_n',b_n']$ with $a_n' \geq a_n + k$. Let $\psi$ be the corresponding non-decreasing $f$-preserving function $[a_n,b_n] \to [a_n',b_n']$. By inserting elements below and between the elements of $\bar{v}_e[s]$, we can have that the segment $\bar{v}_e[s]$ corresponds, at stage $s+1$, to the image in $[a_n',b_n']$, under $\psi$, of $[a_n,b_n] = \pi_s(\bar{v}_e[s])$. Any new elements that were inserted and end up in the new interval are added to the collection of coding elements $\bar{v}_e[s+1]$ corresponding to $e$, otherwise the newly added elements are padding elements. Further, we ensure that enough new coding elements are after the final element to ensure we complete the entire interval, putting these elements in $\bar{v}_e[s+1]$ as well. Thus we have
		\[ \pi_{s+1}(\bar{v}_e[s]) = \psi (\pi_s(\bar{v}_e[s])) \]
		and
		\[ \pi_{s+1}(\bar{v}_e[s+1]) = [a_{n}',b_{n}'].\]
	\end{itemize}
	
	\item Finally, we introduce the coding elements corresponding to $s$. After we have ensured all previously added elements still satisfy the requirements check the value of $X_{s+1}(s)$ (which we may assume is $0$, $s \notin X_{s+1}$). Choose a coding sequence $\mathsf{CS}$ of length $1$ such that $\minrank(\mathsf{CS}) \geq \beta \geq r(s,s+1)$. Since there is an infinite sequence of such coding sequences each permitting the next, we can choose such a coding sequence $\mathsf{CS}$ such that the last interval $[a,b]$ of that coding sequence has $a$ greater than the length of the linear order $\mc{A}$ at this stage. Insert enough new padding elements to the end of $\mc{A}$ to extend it to have length $a$, then add $b-a+1$ new coding elements corresponding to the interval $[a,b]$. These are the initial coding elements $\bar{u}_s[s+1]$ for $s$, and make up the coding segment $\bar{v}_s[s+1]$ at this stage. Set $\mathsf{CS}_{s,s+1}$ to be this coding sequence  $\mathsf{CS}_{s,s+1} = \mathsf{CS}$.
\end{itemize}
This ends the construction.

\medskip

\noindent \textbf{Verification}: Much of the verification is the same as in \Cref{thm:all-d2}. For example, to see that $\cA$ is really a computable copy of ${(\omega, <)}$ we observe as before that for any fixed $a$ there is some $e$ such that elements are only inserted below $a$ when for some stage $s$ and some $e' \leq e$, $X_{s+1}(e) \neq X_s(e)$. Since $X$ is $\Delta^0_2$, these eventually come to a limit, and so only finitely many elements are inserted below $a$.

It remains to show that $f^{\cA} \equiv_T X$. This is a little more complicated than in \Cref{thm:all-d2} due to the lateral movement from one coding sequence to a coding sequence it permits. The analogue of \Cref{lem:comp-of-maps} is as follows, and is proved in the same way.

\begin{lemma}
	Suppose that $s < t$.
	\begin{enumerate}
		\item If, for all $s'$ with $s \leq s' \leq t$ we have $X_{s'}(e) = X_{s}(e) = X_t(e)$, then $\mathsf{CS}_{e,s}$ is the same length as and permits $\mathsf{CS}_{e,t}$, say as witnessed by the $f$-preserving function $\psi \colon [a_n,b_n] \to [a_n',b_n']$ on the last intervals of these coding sequences. Moreover,
		\[ [a_n',b_n'] \supseteq \pi_{t}(\bar{v}_e[s]) =  \psi (\pi_s(\bar{v}_e[s])) = \psi [a_n,b_n]. \]
		
		\item If $s' \geq s$ is the least stage with $X_{s'+1}(e) \neq X_{s'}(e)$ then $\mathsf{CS}_{e,s}$ is the same length (say $m$) as and is either equal to or permits $\mathsf{CS}_{e,s'}$, as witnessed by the $f$-preserving function $\psi \colon [a_m,b_m] \to [a_m',b_m']$ on the last intervals of these coding sequences, and $\mathsf{CS}_{e,t}$ (say of length $n > m$) strictly extends $\mathsf{CS}_{e,s'}$. (If $\mathsf{CS}_{e,s}$ and $\mathsf{CS}_{e,s'}$ are the same then $\psi$ is the identity.) Moreover,
		\[ [a_n,b_n] \supseteq \pi_{t}(\bar{v}_e[s]) = \varphi_{m \mapsto n} \circ \psi (\pi_s(\bar{v}_e[s])) = \varphi_{m \mapsto n} \circ \psi [a_m,b_m]. \]
		In particular, if $m$ and $n$ are the same parity then $\varphi_{m \mapsto n} \circ \psi$ is $f$-preserving.
	\end{enumerate}
\end{lemma}

First we show that $f^{\cA} \ge_T X$. Given some element $e \in X$ run the above construction, which is computable, until stage $e+1$ when the initial coding elements $\bar{u}_e$ corresponding to $e$ are added. Now, compute the value of $f^{\mc{A}}$ on these elements. If $f^{\mc{A}}$ is the same on $\bar{u}_e$ as it was at stage $e+1$, then $X(e) = X_e(e+1)$. Otherwise, if $f^{\mc{A}}$ is different on $\bar{u}_e$ than it was at stage $e+1$, then $X(e) = 1- X_e(e+1)$.

Finally, we show that $X \ge_T f^{\cA}$. Given some element $a \in \mc{A}$ run the above construction until $a$ is added to the linear order. If $a$ is added as a padding element then the construction ensures that $f^{\cA}(a)$ does not change and so we take the value at this stage. If $a$ is a coding element corresponding to $e$ then $f^{\cA}(a)$ takes on one of two values depending on $X(e)$. Say that $a$ first appears in $\mc{A}$ at a stage $s$ in a coding block $\bar{v}_e[s]$, corresponding at that stage to $[a_i,b_i]$ and with $X_s(e) = k$. If $X(e) = X_s(e) = k$, then $f^{\cA}(a)$ is the same as it was at stage $s$. Otherwise, suppose that $X(e) \neq X_s(e) = k$. Then, at some stage $t > s$, we find that $X(e) = X_t(e) = 1-k$. At this stage, $a$ is an element of $\bar{v}_e[t]$ which corresponds to $[a_j,b_j]$ with $j$ of a different parity from $i$. Then $f^{\cA}(a)$ is the same as it was at stage $t$.
\end{proof}

\begin{remark}
Indeed, the proof can be used to show that all $\alpha$-computable degrees are contained in the degree spectrum of $f$, where a set $A$ is $\alpha$-computable if 
there is a computable approximation function $g \colon \omega^2 \to \{0,1\}$ and a computable counting function $r \colon \omega^2 \to \alpha$ such that 
\begin{enumerate}
	\item for all $x$, $r(x,0) < \alpha$,
	\item for all $x$ and $s$, $r(x,s+1) \preceq r(x,s)$,
	\item if $g(x,s+1) \neq g(x,s)$ then $r(x,s+1) \prec r(x,s)$, and
	\item $A(x) = \lim_{s \to \infty} g(x,s)$.
\end{enumerate}
The difference is that we do not require that $g(x,0) = 0$, so that we may begin with $x \notin A$ or $x \in A$. An $\alpha$-c.e.\ set is $\alpha$-computable but not necessarily vice versa. For degrees, the difference is mostly important when $\alpha$ is a limit ordinal.
\end{remark}

\begin{example}
Consider the function $f$ from Theorem \ref{ex:int-fun}. It was shown that, given any $f$-coding sequence $[a_1, b_1], [a_2, b_2], \ldots$, there must a link which is vulnerable in either $[a_2, b_2]$ or $[a_3, b_3]$. We show that the minimal coding tree of $f$ has $\minrank \leq 3$ by showing any coding sequence with a vulnerable link has rank $0$. Consider the case of a coding sequence $[a_1,b_1],[a_2,b_2],[a_3,b_3]$ with a link $\ell_1,\ldots,\ell_k$ witnessed in $[a_2,b_2]$ which is vulnerable in $[a_3,b_3]$. (The case of a link witnessed in $[a_1,b_1]$ which is vulnerable in $[a_2,b_2]$ is similar.) Consider a coding sequence $[a_1, b_1],[a_2,b_2],[a_3',b_3']$ which is permitted by $[a_1, b_1], [a_2,b_2],[a_3,b_3]$. The image in $[a_3,b_3]$ under $\varphi_2$ of the link $\ell_1,\ldots,\ell_k$ in $[a_2,b_2]$ intersects, in $[a_3,b_3]$, at least two blocks. Since any two blocks appear adjacent to each other in $f$ at most once, the image of these blocks in $[a_3',b_3']$ under $\psi$ are not adjacent and so have some element between them. But $\psi \circ \varphi_2$ is the map $\varphi_2' \colon [a_2,b_2] \to [a_3',b_3']$ from the coding sequence $[a_1, b_1],[a_2,b_2],[a_3',b_3']$, and so in $[a_3',b_3']$ the image of $\ell_1,\ldots,\ell_k$ under $\varphi_2'$ intersects two blocks which have had some element, not in the image of the link, between them. That is, $\ell_1,\ldots,\ell_k$ is actually broken in $[a_3',b_3']$. Thus the coding sequence $[a_1, b_1],[a_2,b_2],[a_3',b_3']$ has no extensions and is of $\minrank$ zero. Since $[a_1, b_1], [a_2,b_2],[a_3,b_3]$ only permits coding sequences of $\minrank$ zero, it also has $\minrank$ zero. Thus $\minrank(f) \leq 3$ as any coding sequence with three elements has $\minrank$ zero.

It is not hard to show that $\minrank(f) \geq 3$ so that $\minrank(f) = 3$. Let $\ell < m < n$. Consider the coding sequences $[a_1,b_1]$ where $[a_1,b_1]$ has block type $L_\ell$, an $\ell$-loop. For any two such coding sequences, the lesser one permits the greater one. These coding sequences can be extended to coding sequences $[a_1,b_1],[a_2,b_2]$ where $[a_2,b_2]$ has block type $L_m$, and $m$-loop, say under the map $\varphi_1 \colon [a_1,b_1] \mapsto [a_2,a_2 + b_1 - a_1]\subseteq [a_2,b_2]$. For any two such coding sequences with the same first interval $[a_1,b_1]$, the lesser one permits the greater one. Finally, any such sequence $[a_1,b_1],[a_2,b_2]$ extends to a sequence $[a_1,b_1],[a_2,b_2],[a_3,b_3]$ where the image of $[a_2,b_2]$ in $[a_3,b_3]$ is made up of two blocks, one of block type $L_\ell$, and the other of block type $L_n$. $[a_3,b_3]$ may have further blocks in between these. This shows that any $[a_1,b_1],[a_2,b_2]$ as above has $\minrank$ one, and any $[a_1,b_1]$ as above has $\minrank$ two. Thus $\minrank(f) = 3$.
\end{example}

Theorems \ref{thm:max-tree} and \ref{thm:min-tree} only tell us that on a cone the degree spectrum of this function $f$ contains all 2-c.e. degrees but not all 6-c.e. degrees, but does not give any information about the degrees between these. A more ad-hoc argument can be given to show that, in fact, the degree spectrum of $f$ does not contain all 3-c.e. degrees. This argument relies on the fact that any sequence permitting a vulnerable sequence has rank 0 and so this can be accounted for ahead of time in the priority argument by reserving an element which will force the coding sequence produced by the function to move to one which is permitted by it. However, one could imagine this will not happen more generally for any function of $\minrank$ three as this argument depends on the number of times one sequence permits a vulnerable sequence before decreasing its maximal rank. In fact, we believe that one could produce an example of a function with minimal rank 3 and maximal rank 6 but whose degree spectrum contains all 4-c.e. degrees. So, the difference between the minimal and maximal ranks arises not from a difference in the lengths of the coding sequences, but from the more complicated behavior underlying which sequences permit each other. We believe that by coming up examples with different permitting behaviours, one might be able to find, e.g., incomparable degree spectra on a cone.

\begin{conjecture}
	There are block functions $f$ and $g$ on ${(\omega,<)}$ whose degree spectra on a cone are incomparable.
\end{conjecture}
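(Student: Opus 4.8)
The plan is to mimic the strategy that produced incomparable degree spectra for relations on general structures in \cite{MHT18}, but now entirely within the world of block functions on ${(\omega,<)}$, using the coding-tree machinery behind \Cref{thm:max-tree} and \Cref{thm:min-tree} for the two halves of the argument. Concretely, I would aim to design two block functions $f$ and $g$, together with two ``target'' families of $\Delta^0_2$ degrees $\mathcal{D}_f$ and $\mathcal{D}_g$, so that the permitting structure of $T_{\min}(f)$ is rich enough to code every degree in $\mathcal{D}_f$ on a cone while the permitting structure of $T_{\max}(f)$ is poor enough to block some particular degree in $\mathcal{D}_g$, and symmetrically for $g$. Since the degree spectrum of a block function is insensitive to deleting finitely many blocks and to the presentation of the relevant ordinals (by the remark after \Cref{thm:max-tree}), the resulting inclusions $\mathcal{D}_f \subseteq \text{DgSp}(f)$, $\mathcal{D}_g \not\subseteq \text{DgSp}(f)$ (and the mirror statements) would give $\text{DgSp}(f)$ and $\text{DgSp}(g)$ incomparable on a cone. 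The first conceptual step, then, is to isolate which feature of the permitting relation controls ``how many lateral moves a coding sequence of a given maximal rank can absorb before its rank must drop'': this is exactly the quantity the discussion after \Cref{thm:min-tree} flags in the $3$-c.e.\ case, and an incomparability proof needs this data to be genuinely two-dimensional rather than collapsible to a single ordinal invariant.

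Second, I would build explicit ``permitting gadgets'': block-function templates whose coding trees realize a prescribed profile of (maximal rank, number of permitted lateral moves available at each node). A natural source is to vary the adjacency constraint used in \Cref{ex:int-fun}: instead of requiring that each ordered pair of block types occur adjacent at most once, allow a given pair to occur adjacent up to $k$ times — this increases the number of lateral moves a link can survive before a broken link forces the coding sequence to terminate — and let $k$ depend on the block sizes involved, choosing the dependence so that $f$ and $g$ see the bound ``in opposite places.'' The plan is then to interleave two such gadget families inside each of $f$ and $g$ so that, say, $f$ can code a degree whose mind-change pattern demands many lateral moves early and few later, while $g$ can code one demanding the reverse, and neither can code the other's.

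Third, with the gadgets fixed, the positive directions $\mathcal{D}_f \subseteq \text{DgSp}(f)$ and $\mathcal{D}_g \subseteq \text{DgSp}(g)$ on a cone would be a direct adaptation of the construction proving \Cref{thm:min-tree}: walk each coding segment through $T_{\min}$, using permitted lateral moves to make room for lower-priority coding segments, the only new ingredient being that one tracks the two-dimensional permitting profile rather than a single rank. The negative directions — some degree of $\mathcal{D}_g$ is not in $\text{DgSp}(f)$ and vice versa — would be a finite-injury diagonalization in the style of \Cref{thm:max-tree}, refined exactly as in the paper's sketch of the ad-hoc argument for $3$-c.e.\ degrees: reserve, in advance, elements whose later insertion forces any coding sequence the opponent builds to spend a permitted lateral move, so that its budget of lateral moves is exhausted faster than the opponent can afford under the prescribed mind-change pattern.

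The main obstacle I expect is the first step. The paper is explicit that $\maxrank$ and $\minrank$ give only an upper and a lower bound and that ``it can be much more complicated to determine exactly which sets are in the degree spectrum,'' so there is currently no clean invariant of a block function that one can compute and then compare; an incomparability proof essentially forces one to develop at least a partial such invariant — a ``permitting profile'' finer than the $\alpha$-c.e.\ hierarchy — and to verify it is genuinely an invariant of the degree spectrum on a cone. Getting the two gadget families inside $f$ and $g$ to interact so that the positive constructions and the negative diagonalizations do not accidentally collapse the spectra into a comparable pair is where I would expect most of the work, and the likeliest failure mode is that every asymmetry one designs into the permitting structure turns out, after analysis, to be washed out on a cone.
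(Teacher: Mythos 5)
The statement you are addressing is labeled a \emph{conjecture} in the paper, and the authors do not prove it; they offer only the informal remark that ``by coming up examples with different permitting behaviours, one might be able to find, e.g., incomparable degree spectra on a cone.'' There is therefore no proof in the paper against which to compare your attempt, and what you have written is a research plan rather than a proof.

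As a plan it is well aligned with the authors' own remark: the split into a $T_{\min}$-style positive argument and a $T_{\max}$-style diagonalization, the idea of relaxing the once-only adjacency constraint to a $k$-fold one so that a vulnerable link can absorb more lateral moves before breaking, and the aim of making the permitting profile ``genuinely two-dimensional'' all echo the discussion between Theorem~\ref{thm:min-tree} and the conjecture. But the concrete unresolved step is exactly the one you flag yourself. The paper is explicit that $\maxrank(f)$ and $\minrank(f)$ give only an upper and a lower bound, and that even in the $3$-c.e.\ case for the concrete example of Theorem~\ref{ex:int-fun} a more ad-hoc analysis is needed; so any incomparability argument must manufacture a finer invariant of the degree spectrum on a cone than any currently available, and must actually verify that this invariant is an invariant (insensitive, for instance, to deleting finitely many blocks, and to going to a larger cone). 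Nothing in your sketch supplies that, and there is a real risk --- which you also flag --- that every asymmetry built into the permitting structure is washed out on a cone. In short: the proposal is a reasonable and honest plan of attack, consistent with the authors' own suggestion, but it neither proves the conjecture nor narrows the gap the paper itself identifies as open.
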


We will give some examples in this paper but we will focus on cases in which the minimal and maximal coding trees match up at various ordinals $\alpha$. We say that an ordinal $\alpha$ is even if it is of the form $\lambda + n$ where $\lambda$ is a limit ordinal and $n \in \omega$ is even; otherwise, we say that $\alpha$ is odd, in which case it is of the form $\lambda + n$ with $n$ odd.

\begin{theorem}\label{thm:example-alpha-even}
Fix $\alpha \geq 6$ even. There is a block function $f$ on ${(\omega,<)}$ whose degree spectrum on a cone contains all of the $\beta$-c.e.\ degrees for $\beta < \alpha$ and does not contain all of the $\alpha$-c.e.\ degrees.
\end{theorem}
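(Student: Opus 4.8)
The plan is to build a single block function $f$ for which $\maxrank(f) = \minrank(f) = \alpha$. Granting this, \Cref{thm:max-tree} immediately yields an $\alpha$-c.e.\ degree missing from the degree spectrum of $f$, so the spectrum does not contain all $\alpha$-c.e.\ degrees; and \Cref{thm:min-tree} yields that, on a cone, the spectrum contains all $\beta$-c.e.\ degrees for every $\beta < \alpha$. (As usual, $f$ need not be computable, and the cone absorbs a presentation of $\alpha$ and the coding trees; a cone is genuinely needed for the $\minrank$ half since a presentation of $\alpha$ is required.) Moreover, since every strong coding sequence is a weak one and the inductive clause defining $\minrank$ is more restrictive than the ordinary tree rank, one checks by a short transfinite induction that always $\minrank(g) \le \maxrank(g)$; hence it suffices to construct $f$ with $\maxrank(f) \le \alpha$ and $\minrank(f) \ge \alpha$.

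\textbf{The construction.} Fix a presentation of $\alpha = \lambda + n$ (with $\lambda$ limit and $n \in \omega$ even), together with a fixed well-founded tree of rank $\alpha$ whose nodes index the ``schedule'' of blocks. The function $f$ will be a graded version of the example in \Cref{ex:int-fun}: every block type that occurs will occur infinitely often, distinct block types will have distinct sizes and will not embed into one another, and the blocks will be built out of loops so that the ``link becomes vulnerable, then broken'' mechanism of \Cref{ex:int-fun} can be chained. What changes is the \emph{adjacency discipline}. Rather than forbidding every ordered pair of block types from ever recurring adjacently, we attach to each potential adjacency configuration an ordinal ``budget'' $\beta < \alpha$ and lay the blocks out so that a configuration of budget $\beta$ may recur just often enough to realize all coding sequences of rank $< \beta$ below it, and no more; walking the schedule along branches of the fixed rank-$\alpha$ tree makes the length and branching of weak coding sequences mirror ranks in that tree. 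Because every block type recurs infinitely often, every block embeds into infinitely many later blocks, so by \Cref{them:cond-for-c.e.} the degree spectrum of $f$ already strictly contains the c.e.\ degrees, and the hypotheses of \Cref{thm:max-tree,thm:min-tree} are met. The parity hypothesis on $\alpha$ enters here: in the termination analysis below, a coding sequence that has been broken may continue for a bounded, parity-determined number of further steps, and for even $\alpha$ the schedule can be tuned so that the resulting overall bound on coding ranks is exactly $\alpha$.

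\textbf{The two bounds.} For $\maxrank(f) \le \alpha$, one analyzes an arbitrary weak $f$-coding sequence $[a_1,b_1],[a_2,b_2],\ldots$ with maps $\varphi_i$ exactly as in \Cref{ex:int-fun}, now relativized to the level structure: failure of $\varphi_1$ to preserve $f$ produces a link that becomes vulnerable within one or two steps; once vulnerable, order-preservation plus the graded once-adjacency constraint force it to be broken within boundedly many more steps; and once broken, the parity argument of \Cref{ex:int-fun} forces termination within boundedly many more. Tracking the budgets of the configurations involved, a transfinite induction shows that the rank of the subtree of $T_{\max}(f)$ below any node is $< \alpha$. For $\minrank(f) \ge \alpha$ we must instead exhibit, for every $\beta < \alpha$, strong coding sequences of $\minrank \ge \beta$ together with enough permitting — this is exactly where the example of \Cref{ex:int-fun} failed, since there every coding sequence permitted by a vulnerable one breaks at once and so has $\minrank$ zero. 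In the present construction the budgets are chosen precisely so that a coding sequence realizing budget $\beta$ still permits other coding sequences realizing budget $\beta$ (the relevant block type has not yet been forced into a breaking configuration), so the inductive clause in the definition of $\minrank$ can be satisfied all the way to $\alpha$; unwinding the recursion gives $\minrank(\varnothing) = \alpha$.

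\textbf{Main obstacle.} The crux is the tension inside the construction: the once-adjacency discipline is what caps $\maxrank$ at $\alpha$, but an overly rigid version of it is exactly what collapsed $\minrank$ in \Cref{ex:int-fun}. Making the two ranks coincide requires a schedule in which breaking configurations appear \emph{late enough} along every branch to keep the permitting relation rich (so $\minrank \ge \alpha$) yet \emph{often enough} that no coding sequence of rank $\ge \alpha$ can exist (so $\maxrank \le \alpha$), with the two thresholds matched level by level against the fixed rank-$\alpha$ tree. Designing this schedule and carrying the transfinite bookkeeping through a general $\alpha = \lambda + n$ with $n$ even and $\alpha \ge 6$ — in particular handling the interaction of the finite tail $n$ with the limit part $\lambda$ and with the parity constraint from the breaking analysis — is where essentially all the work lies; once it is in place, the passage to the statement of the theorem via \Cref{thm:max-tree,thm:min-tree} is automatic.
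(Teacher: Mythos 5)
Your overall strategy is correct and matches the paper's: build a block function $f$ whose coding-tree ranks are controlled by a fixed tree $T$ of rank $\alpha$, then cite \Cref{thm:max-tree} and \Cref{thm:min-tree}. Your preliminary observation that $\minrank(g) \le \maxrank(g)$ always holds is also correct (every strong coding sequence is a weak one, and the permitting clause in the definition of $\minrank$ is strictly more demanding than ordinary tree extension). So the target you set — $\maxrank(f) \le \alpha$ and $\minrank(f) \ge \alpha$ — is a legitimate reduction of the theorem.

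However, the proposal stops exactly where the proof begins. The ``graded adjacency discipline'' and ``ordinal budgets on adjacency configurations'' are a description of what a construction ought to achieve, not a construction, and nothing in your write-up pins down what $f$ actually is. The paper's construction is quite specific and the specificity matters: to each non-root $\sigma \in T$ it assigns a block $B_\sigma$ consisting of a concatenation of loops $L_{2^{\ell(\sigma\upharpoonright i)+2}}$ (whose lengths, read in binary, uniquely encode the initial segments of $\sigma$) bracketed by four ``sandwich elements'' $x_0,x_1,x_2,x_3$ whose $f$-pattern depends on the parity of $|\sigma|$. It then interleaves these with odd-length loops $L_{j(i)}$ as the even blocks. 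The sandwich elements are what rigidify the $f$-preserving maps: any $f$-preserving embedding between $B_\sigma$-type blocks must send sandwich elements to sandwich elements, which forces a coding sequence $[a_1,b_1],[a_2,b_2],\ldots$ (with no vulnerable links) to trace out two interleaved $\prec$-increasing chains $\sigma_1\prec\sigma_2\prec\cdots$ and $\tau_1\prec\tau_2\prec\cdots$ in $T$ with controlled parities. The upper bound on the maximal coding rank is then a genuine transfinite calculation over the auxiliary tree $T^*$ of such interleaved pairs, where the parity hypothesis on $\alpha$ is used at the root to make the estimates close up. None of this bookkeeping is a ``schedule'' one can describe qualitatively; the loop-length encoding and the sandwich gadget are exactly the content of the theorem, and without them you have no way to prove that $f$-preserving maps must respect the tree structure, nor that every branch terminates with the right rank. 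Your last paragraph explicitly defers ``essentially all the work,'' which in a proof is a gap, not a summary.

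One further caution: you aim for $\maxrank(f) \le \alpha$, whereas the paper's calculation as written only yields $\maxrank(f) \le \alpha+1$ (the extra $+1$ comes from the empty coding sequence at the root of $T_{\max}(f)$, and from the bounded paths arising when a link becomes vulnerable). If you can genuinely hit $\maxrank(f) \le \alpha$, that would be cleaner, since \Cref{thm:max-tree} then directly gives a missing $\alpha$-c.e.\ degree; but your sketch gives no reason to believe the tighter bound and you would need to track the root contribution and the vulnerable-link paths carefully to see whether $\alpha$ or $\alpha+1$ is what actually comes out.
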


\begin{proof}
Let $T \subseteq \omega^{<\omega}$ be a tree of rank $\alpha$, with rank function $\rank \colon T \to \alpha \cup \{\alpha\}$. Thus $\rank(\varnothing) = \alpha$ is even. We may choose $T$ such that for each $\sigma \in T$, $\rank(\sigma)$ has the same parity as the length of $\sigma$, i.e., if $\sigma$ is of even length then $\rank(\sigma)$ is an even ordinal, and if $\sigma$ is of odd length then $\rank(\sigma)$ is an odd ordinal. 

Fix some G\"odel numbering $\ell$ of the elements of $\omega^{<\omega}$.
 For each $\sigma = \langle a_0,\ldots,a_n \rangle \in \omega^{<\omega}$, with $\sigma \neq \varnothing$, associate with $\sigma$ the following block type $B_\sigma$: 
\[ x_0 + L_{2^{\ell({\sigma \upharpoonright 0})+2}} + L_{2^{\ell({\sigma \upharpoonright 1})+2}}+ \cdots + L_{2^{\ell({\sigma \upharpoonright n})+2}} + x_1 + x_2+x_3\]
where if $|\sigma|$ is even we let 
\[ x_0 \to x_3 \to x_2 \to x_1 \to x_1\]
and if $|\sigma|$ is odd we let: 
\[ x_0 \to x_3 \to x_2 \to x_1 \to x_2\]
We call $x_0,x_1,x_2,x_3$ the sandwich elements of the block.

Notice that each of these blocks have different lengths (in base 2 their lengths have 1's in positions uniquely determined by the initial segments of $\sigma$) and that each of their lengths are even. Now, we define $f$ as follows: The even blocks are the sequence $L_1, L_3, L_5, \ldots$ of loops of odd lengths. The odd blocks are given by $B_{\sigma_{1}}, L_{j(1)}, B_{\sigma_{2}}, L_{j(1)}, \ldots $
where $\sigma_1, \sigma_2, \ldots$ is a recursive enumeration of the elements of $T$ where each element appears infinitely often. Similarly, $j$ is a recursive enumeration of the odd integers such that each element appears infinitely often but with the additional condition that $j(i) \le 2i+1$ for all $i$. As in \Cref{ex:int-fun}, $f$ satisfies the following properties: 
\begin{itemize}
	\item all blocks that occur in the function occur infinitely often
	\item no two different block types that occur in the function have the same size 
	\item no two blocks types are adjacent (in the same order) more than once
\end{itemize}
There was one property in \Cref{ex:int-fun} that now fails: previously, no block type embedded into any other block type, but now there are many block types which embed into each other. The reader should note that the first bullet point is the reason that we include the blocks $L_{j(i)}$ along the odd sequence of blocks, and is what allows the degree spectrum of $f$ on a cone to contain non-c.e.\ degrees. They play no other role in the proof. The requirement that $j(i) \le 2i+1$ helps ensure the third bullet point.

Notice that $T$ embeds into the minimal coding tree of $f$. Given $\sigma \in T$ we can associate to it the coding sequence $[a_1, b_1], \ldots, [a_{|\sigma|}, b_{|\sigma|}]$ where $[a_i, b_i]$ is an interval with $a_i > b_{i-1}$ and which is a block of the form $B_{\sigma \upharpoonright i}$. (Note that there is no block for the empty sequence, but rather the root of $T$ maps to the empty coding sequence.) The maps $\vp_i: B_{\sigma \upharpoonright i} \to B_{\sigma \upharpoonright(i+1)}$ are the natural maps sending each $x_j$ in one block to the corresponding $x_j$ in the other block, and sending each loop to the loop of the same length. For each $\sigma$ this will be an $f$-coding sequence; it is important here how we have defined $f$ on $x_0,\ldots,x_3$ differently for odd and even $\sigma$. Also these coding sequences will be in the minimal coding tree since each interval in the sequence is a single block and so, since each block appears infinitely often, is always permitted by some later copy of the block. By Theorem \ref{thm:min-tree}, the degree spectra of $f$ contains all $\beta$-c.e. degrees for $\beta < \alpha$.

We consider two possible types of coding sequences. In $[a_1,b_1]$ there must be some element $x$ such that $\varphi_1(f(x)) \neq f(\varphi_1(x))$. In $[a_1,b_1]$ this element is part of a block, say of type $I$. Consider the image of this block in $[a_2,b_2]$ under $\varphi_1$. If the image is split between two different blocks, then we have a link in $[a_1,b_1]$ that becomes vulnerable in $[a_2,b_2]$. Otherwise, the image is contained entirely in one block, say of type $J$. Now this block embeds, by $\varphi_2$, in $[a_3,b_3]$. If the image is split between two different blocks, then we have a link in $[a_2,b_2]$ that becomes vulnerable in $[a_3,b_3]$. Otherwise, the entire image is contained in a single block, say of type $K$. We will concentrate on this case, returning later to the two cases of a link that becomes vulnerable.

We may assume, without loss of generality by shrinking intervals, that $[a_0,b_0]$ is a block of type $I$, and that each $[a_{n+1},b_{n+1}]$ is the closure, under blocks, of the image of $[a_n,b_n]$ under $\varphi_n$. In particular, $[a_1,b_1]$ is a block of type $J$, and $[a_2,b_2]$ is a block of type $K$. Since no two distinct block types have the same size, $J$ must be larger than $I$, and so $K$ must be larger than $I$. In particular, $I$ must be a block of type $B_{\sigma_1}$ and $K$ of type $B_{\sigma_3}$ for some $\sigma_1 \prec \sigma_3$ whose lengths have the same parity. Let $x_0,x_1,x_2,x_3$ be the four sandwich elements from $I$ with either
\[ x_0 \to x_3 \to x_2 \to x_1 \to x_1\]
or
\[ x_0 \to x_3 \to x_2 \to x_1 \to x_2.\]
The only elements of this kind are the four sandwich elements from a block $B_\tau$. Thus the $f$-preserving map $\varphi_{1 \mapsto 3} \colon [a_1,b_1] \to [a_3,b_3]$ must map the sandwich elements of $I$ to those of $K$, i.e.,
\[ a_1 \mapsto a_3, \qquad b_1-2 \mapsto b_3-2, \qquad b_1-1 \mapsto b_3 - 1, \qquad b_1 \mapsto b_3.\]
Then $\varphi_1$ must map the sandwich elements of $I$ to those of $J$,
\[ a_1 \mapsto a_2, \qquad b_1-2 \mapsto b_2-2, \qquad b_1-1 \mapsto b_2 - 1, \qquad b_1 \mapsto b_2,\]
and $\varphi_2$ must map the sandwich elements of $J$ to those of $K$,
\[ a_2 \mapsto a_3, \qquad b_2-2 \mapsto b_3-2, \qquad b_2-1 \mapsto b_3 - 1, \qquad b_2 \mapsto b_3.\]
(For example, if $a_1$ did not map to $a_2$, then we would have $\varphi_1(a_1) > a_2$ and so $a_3 = \varphi_2(\varphi(a_1)) > \varphi_2(a_2) \geq a_3$.)
That is, in the maps $I \to J \to K$ induced by $\varphi_1$ and $\varphi_2$, the sandwich elements are mapped to each other. The map $\varphi_{2 \mapsto 4} \colon [a_2,b_2] \to [a_4,b_4]$, since it preserves $f$, must map the sandwich elements of $J$ to the sandwich elements of another block, say of type $M$. Moreover, arguing similarly, $\varphi_3$ must map the elements of $K$ inside of $M$ while mapping sandwich elements to sandwich elements. Continuing in this way, the sequence $[a_1,b_1],[a_2,b_2],\ldots$ must consist of blocks of the form $B_{\sigma_1},B_{\tau_1},B_{\sigma_2},B_{\tau_2},\ldots$ where $\sigma_1 \prec \sigma_2 \prec \cdots$ and $\tau_1 \prec \tau_2 \prec \cdots$. Moreover, the lengths of the $\sigma_i$ are all of the same parity as each other, and the lengths of the $\tau_i$ are also of the same parity as each other. None of the $\sigma_i$ and $\tau_i$ are $\varnothing$, the root of $T$. Thus in $T$, $\rank(\sigma_i),\rank(\tau_i) < \alpha$. 

Now we return to the cases we skipped where there is a link which becomes vulnerable, and argue that any such $f$-coding sequence has length at most $5$:
For any $f$-coding sequence $[a_1, b_1], [a_2, b_2], \ldots$ and maps $\vp_i$, as shown in \Cref{ex:int-fun}, if there is some link in $[a_i, b_i]$ which is vulnerable in $[a_j, b_j]$ then the link will be broken by $[a_{j+2}, b_{j+2}]$ and so the coding sequence must terminate.

Thus, in the maximal coding tree, we have certain (finite) maximal paths of length $\leq 6$, e.g., those consisting of an $f$-coding sequence where a link becomes vulnerable,\footnote{All $f$-coding sequences have length 5, but in the tree this path starts with the empty $f$-coding sequence and so this corresponds to a path of length $6$.} and all other paths are induced by a pair of paths through $T$. This will imply, since $\alpha \geq 5$, that the rank of the maximal coding tree is at most $\alpha$. Indeed, consider the tree $T^*$ of pairs of finite sequences $\langle \sigma_1,\tau_1,\sigma_2,\tau_2,\ldots, \rangle$ of non-root elements of $T$ such that (a) $\sigma_1 \prec \sigma_2 \prec \cdots$ and $\tau_1 \prec \tau_2 \prec \cdots$, (b) the lengths of the $\sigma_i$ are all of the same parity as each other, and (c) the lengths of the $\tau_i$ are also of the same parity as each other. By (b) and (c), the ranks of the $\sigma_i$ are either all even or all odd, and similarly for the $\tau_i$. Since each $\sigma_i$ and $\tau_i$ is not the root $\varnothing$ of $T$, their ranks are all $< \alpha$. Let $\rank^*$ be the rank function on this tree $T^*$, which is clearly well-founded. Other than the $f$-coding sequences where a link becomes vulnerable, all $f$-coding sequences correspond to paths through $T^*$, and so by bounding the rank of $T^*$ we can bound the rank of the maximal coding tree. We prove the following four claims to calculate $\rank^*(T^*)$.

\begin{claim}
    If $\rank(\sigma_n)$ is even then 
	\[\rank^*(\langle \sigma_1,\tau_1,\sigma_2,\tau_2,\ldots,\tau_{n-1},\sigma_n \rangle) \leq \rank(\sigma_n)+1,\]
	and if $\rank(\sigma_n)$ is odd then
	\[\rank^*(\langle \sigma_1,\tau_1,\sigma_2,\tau_2,\ldots,\tau_{n-1},\sigma_n \rangle) \leq  \rank(\sigma_n).\]
	If $\rank(\tau_{n-1})$ is even then 
	\[\rank^*(\langle \sigma_1,\tau_1,\sigma_2,\tau_2,\ldots,\tau_{n-1},\sigma_n \rangle) \leq \rank(\tau_{n-1}),\]
	and if $\rank(\tau_{n-1})$ is odd then
	\[\rank^*(\langle \sigma_1,\tau_1,\sigma_2,\tau_2,\ldots,\tau_{n-1},\sigma_n \rangle) \leq \rank(\tau_{n-1})-1.\]
\end{claim}
\begin{proof}
    We argue inductively. Suppose that we have $\langle \sigma_1,\tau_1,\sigma_2,\tau_2,\ldots,\tau_{n-1},\sigma_n,\tau_n \rangle$ an extension of rank $\beta$   extending $\langle \sigma_1,\tau_1,\sigma_2,\tau_2,\ldots,\tau_{n-1},\sigma_n \rangle$. We have several cases to check.
\begin{itemize}
	\item Suppose $\rank(\tau_{n-1})$ and $\rank(\tau_{n})$ are both even. Then, inductively, $\beta \leq \rank(\tau_n)+1$. Since $\tau_{n}$ is an extension of $\tau_{n-1}$ and $\rank(\tau_n)$ is at least two less than $\rank(\tau_{n-1})$, $\beta + 1 \leq \rank(\tau_n) + 2 \leq \rank(\tau_{n-1})$ and so
	\[ \rank^*(\langle \sigma_1,\tau_1,\sigma_2,\tau_2,\ldots,\tau_{n-1},\sigma_n \rangle) \leq \rank(\tau_{n-1}).\]
	\item Suppose that $\rank(\tau_{n-1})$ and $\rank(\tau_n)$ are both odd. Then, inductively, $\beta \leq \rank(\tau_n)$. Since $\rank(\tau_{n})$ is at least two less than $\rank(\tau_{n-1})$, $\beta + 1 \leq \rank(\tau_n) + 1 \leq \rank(\tau_{n-1}) - 1$ and so
	\[ \rank^*(\langle \sigma_1,\tau_1,\sigma_2,\tau_2,\ldots,\tau_{n-1},\sigma_n \rangle) \leq \rank(\tau_{n-1})-1.\]
	\item Suppose that $\rank(\sigma_n)$ is even. Then, inductively, $\beta \leq \rank(\sigma_n)$ and so $\beta+1 \leq \rank(\sigma_n)+1$. Thus
	\[ \rank^*(\langle \sigma_1,\tau_1,\sigma_2,\tau_2,\ldots,\tau_{n-1},\sigma_n \rangle) \leq \rank(\sigma_n)+1.\]
	\item Suppose that $\rank(\sigma_n)$ is odd. Then, inductively, $\beta \leq \rank(\sigma_n)-1$ and so $\beta+1 \leq \rank(\sigma_n)$. Thus
	\[ \rank^*(\langle \sigma_1,\tau_1,\sigma_2,\tau_2,\ldots,\tau_{n-1},\sigma_n \rangle) \leq \rank(\sigma_n).\qedhere\]
\end{itemize}
\end{proof}

\begin{claim}
    If $\rank(\tau_n)$ is even then 
	\[\rank^*(\langle \sigma_1,\tau_1,\sigma_2,\tau_2,\ldots,\sigma_n,\tau_n \rangle) \leq \rank(\tau_n)+1,\]
	and if $\rank(\tau_n)$ is odd then
	\[\rank^*(\langle \sigma_1,\tau_1,\sigma_2,\tau_2,\ldots,\sigma_n,\tau_n \rangle) \leq  \rank(\tau_n).\]
	If $\rank(\sigma_n)$ is even then 
	\[\rank^*(\langle \sigma_1,\tau_1,\sigma_2,\tau_2,\ldots,\sigma_n,\tau_n \rangle) \leq \rank(\sigma_n),\]
	and if $\rank(\sigma_n)$ is odd then
	\[\rank^*(\langle \sigma_1,\tau_1,\sigma_2,\tau_2,\ldots,\sigma_n,\tau_n \rangle) \leq \rank(\sigma_n)-1.\]
\end{claim}
\begin{proof}
    This can be argued similarly to the previous claim.
\end{proof}

\begin{claim}
If $\rank(\sigma_1)$ is even then
	\[\rank^*(\langle \sigma_1 \rangle) \leq \rank(\sigma_1)+1,\] 
	and if $\rank(\sigma_1)$ is odd then
	\[\rank^*(\langle \sigma_1 \rangle) \leq \rank(\sigma_1)\]
\end{claim}
\begin{proof}
    We have two cases. Suppose $\langle \sigma_1,\tau_1 \rangle$ is an extension of $\langle \sigma_1 \rangle$ of rank $\beta$. 
\begin{itemize}
	\item If $\rank(\sigma_1)$ is even then $\beta \leq \rank(\sigma_1)$ and so $\beta + 1 < \rank(\sigma_1)+1$. Thus
	\[ \rank^*(\langle \sigma_1 \rangle) \leq \rank(\sigma_1)+1.\]
	\item If $\rank(\sigma_1)$ is odd then $\beta \leq \rank(\sigma_1) - 1$ and so $\beta + 1 \leq \rank(\sigma_1)$. Thus
	\[ \rank^*(\langle \sigma_1 \rangle) \leq \rank(\sigma_1).\qedhere\]
\end{itemize}
\end{proof}

\begin{claim}
    Recalling that $\alpha$ is even,
	\[\rank^*(\varnothing) \leq \alpha.\]
\end{claim}
\begin{proof}
    Suppose that $\langle \sigma_1 \rangle$ is an extension of $\varnothing$ of rank $\beta$. Then we have two cases.
\begin{itemize}
	\item If $\rank(\sigma_1)$ is odd, then $\beta \leq \rank(\sigma_1) < \alpha$. Thus $\beta+1 \leq \alpha$ and so
	\[ \rank^*(\varnothing) \leq \alpha.\]
	\item If $\rank(\sigma_1)$ is even then $\beta \leq \rank(\sigma_1) +1$. Since $\rank(\sigma_1)$ is even ordinal, and $\rank(\sigma_1) < \alpha$ where $\alpha$ is also an even ordinal. Thus $\rank(\sigma_1) +1 < \alpha$. Hence $\beta+1 \leq \alpha$ and so
	\[ \rank^*(\varnothing) \leq \alpha.\qedhere\]
\end{itemize}
\end{proof}
In particular, we have shown that $\rank^*(T^*) = \rank^*(\varnothing) \leq \alpha$. Thus, since $\alpha \geq 5$, we have shown that the rank of the maximal coding tree $T_{\max}(f)$ is at most $\alpha$. By Theorem \ref{thm:max-tree}, on a cone the degree spectrum of $f$ does not contain all $\alpha$-c.e.\ degrees.
\end{proof}

We do not know if this can be proved for $\alpha$ an odd ordinal.

\begin{question}
Is the statement of Theorem \ref{thm:example-alpha-even} provable for $\alpha$ an odd ordinal?
\end{question}

We end this paper with several other questions and conjectures. Though we state these for block functions, we are also interested more generally in any relation $R$ on ${(\omega,<)}$.

\begin{question}
	If the degree spectrum on a cone of a block function $f$ contains the non-c.e.\ degrees, must it contain all $2$-c.e.\ degrees?
\end{question}

\begin{question}
	Is there an infinite descending sequence of degree spectra on a cone of block functions?
\end{question}

\begin{question}
	Is there an exact classification of when the degree spectrum of a block function $f$ on a cone contains all of the $\alpha$-c.e.\ degrees?
\end{question}

\bibliography{References}

@article {MHT18,
    AUTHOR = {Harrison-Trainor, Matthew},
     TITLE = {Degree spectra of relations on a cone},
   JOURNAL = {Mem. Amer. Math. Soc.},
  FJOURNAL = {Memoirs of the American Mathematical Society},
    VOLUME = {253},
      YEAR = {2018},
    NUMBER = {1208},
     PAGES = {v+107},
      ISSN = {0065-9266,1947-6221},
      ISBN = {978-1-4704-2839-6; 978-1-4704-4411-2},
   MRCLASS = {03D45 (03C57)},
  MRNUMBER = {3803555},
       DOI = {10.1090/memo/1208},
       URL = {https://doi.org/10.1090/memo/1208},
}

@article {DMKY,
    AUTHOR = {Downey, R. and Khoussianov, B. and Miller, J. and Yu, L.},
     TITLE = {Degree Spectra of Unary Relations on $(\omega,<)$},
		   JOURNAL = {proceedings of the International Congress in Logic, Methodology and Philosophy of Science, Beijing, 2007},
      YEAR = {2009},
     PAGES = {36--55}
}

@mastersthesis{Knoll09,
	AUTHOR = {Knoll, C.},
	TITLE = {Degree Spectra of Unary relations on $\omega$ and $\zeta$},
	SCHOOL = {University of Waterloo},
	YEAR = {2009}
}

@incollection {Scott,
    AUTHOR = {Scott, Dana},
     TITLE = {Logic with denumerably long formulas and finite strings of
              quantifiers},
 BOOKTITLE = {Theory of {M}odels ({P}roc. 1963 {I}nternat. {S}ympos.
              {B}erkeley)},
     PAGES = {329--341},
 PUBLISHER = {North-Holland, Amsterdam},
      YEAR = {1965},
   MRCLASS = {02.35},
  MRNUMBER = {200133},
MRREVIEWER = {E.\ Engeler},
}

@article {BorelDet,
    AUTHOR = {Martin, Donald A.},
     TITLE = {Borel determinacy},
   JOURNAL = {Ann. of Math. (2)},
  FJOURNAL = {Annals of Mathematics. Second Series},
    VOLUME = {102},
      YEAR = {1975},
    NUMBER = {2},
     PAGES = {363--371},
      ISSN = {0003-486X},
   MRCLASS = {02K30 (04A15 90D05)},
  MRNUMBER = {403976},
MRREVIEWER = {John\ P.\ Burgess},
       DOI = {10.2307/1971035},
       URL = {https://doi.org/10.2307/1971035},
}

@InProceedings{BazhenovDariusz,
  author =	{Bazhenov, Nikolay and Kaloci\'{n}ski, Dariusz},
  title =	{{Degree Spectra, and Relative Acceptability of Notations}},
  booktitle =	{31st EACSL Annual Conference on Computer Science Logic (CSL 2023)},
  pages =	{11:1--11:20},
  series =	{Leibniz International Proceedings in Informatics (LIPIcs)},
  ISBN =	{978-3-95977-264-8},
  ISSN =	{1868-8969},
  year =	{2023},
  volume =	{252},
  editor =	{Klin, Bartek and Pimentel, Elaine},
  publisher =	{Schloss Dagstuhl -- Leibniz-Zentrum f{\"u}r Informatik},
  address =	{Dagstuhl, Germany},
  URL =		{https://drops.dagstuhl.de/entities/document/10.4230/LIPIcs.CSL.2023.11},
  URN =		{urn:nbn:de:0030-drops-174725},
  doi =		{10.4230/LIPIcs.CSL.2023.11},
  annote =	{Keywords: Computable structure theory, Degree spectra, \omega-type order, C.e. degrees, \Delta₂ degrees, Acceptable notation, Successor, Learnability}
}

@article {MartinTuring,
    AUTHOR = {Martin, Donald A.},
     TITLE = {The axiom of determinateness and reduction principles in the
              analytical hierarchy},
   JOURNAL = {Bull. Amer. Math. Soc.},
  FJOURNAL = {Bulletin of the American Mathematical Society},
    VOLUME = {74},
      YEAR = {1968},
     PAGES = {687--689},
      ISSN = {0002-9904},
   MRCLASS = {02.77},
  MRNUMBER = {227022},
MRREVIEWER = {A.\ H.\ Lachlan},
       DOI = {10.1090/S0002-9904-1968-11995-0},
       URL = {https://doi.org/10.1090/S0002-9904-1968-11995-0},
}

@article {Barker,
    AUTHOR = {Barker, E.},
     TITLE = {Intrinsically {$\Sigma^0_\alpha$} relations},
   JOURNAL = {Ann. Pure Appl. Logic},
  FJOURNAL = {Annals of Pure and Applied Logic},
    VOLUME = {39},
      YEAR = {1988},
    NUMBER = {2},
     PAGES = {105--130},
      ISSN = {0168-0072,1873-2461},
   MRCLASS = {03D75 (03D45 03D60)},
  MRNUMBER = {955520},
MRREVIEWER = {Cristian\ Calude},
       DOI = {10.1016/0168-0072(88)90014-0},
       URL = {https://doi.org/10.1016/0168-0072(88)90014-0},
}

@incollection {AshNerode,
    AUTHOR = {Ash, C. J. and Nerode, A.},
     TITLE = {Intrinsically recursive relations},
 BOOKTITLE = {Aspects of effective algebra ({C}layton, 1979)},
     PAGES = {26--41},
 PUBLISHER = {Upside Down A Book Co., Yarra Glen},
      YEAR = {1981},
      ISBN = {0-949865-01-X},
   MRCLASS = {03D45},
  MRNUMBER = {629248},
MRREVIEWER = {John\ W.\ Berry},
}

@book {AshKnight,
    AUTHOR = {Ash, C. J. and Knight, J.},
     TITLE = {Computable structures and the hyperarithmetical hierarchy},
    SERIES = {Studies in Logic and the Foundations of Mathematics},
    VOLUME = {144},
 PUBLISHER = {North-Holland Publishing Co., Amsterdam},
      YEAR = {2000},
     PAGES = {xvi+346},
      ISBN = {0-444-50072-3},
   MRCLASS = {03D45 (03-02 03D28 03D30)},
  MRNUMBER = {1767842},
MRREVIEWER = {Rodney\ G.\ Downey},
}

@article {MontalbanMartin,
    AUTHOR = {Montalb\'{a}n, Antonio},
     TITLE = {Martin's conjecture: a classification of the naturally
              occurring {T}uring degrees},
   JOURNAL = {Notices Amer. Math. Soc.},
  FJOURNAL = {Notices of the American Mathematical Society},
    VOLUME = {66},
      YEAR = {2019},
    NUMBER = {8},
     PAGES = {1209--1215},
      ISSN = {0002-9920,1088-9477},
   MRCLASS = {03D28 (03E15 03E60)},
  MRNUMBER = {3967172},
MRREVIEWER = {Peter\ G.\ Hinman},
}

@article {CsimaHT,
    AUTHOR = {Csima, Barbara F. and Harrison-Trainor, Matthew},
     TITLE = {Degrees of categoricity on a cone via {$\eta$}-systems},
   JOURNAL = {J. Symb. Log.},
  FJOURNAL = {The Journal of Symbolic Logic},
    VOLUME = {82},
      YEAR = {2017},
    NUMBER = {1},
     PAGES = {325--346},
      ISSN = {0022-4812,1943-5886},
   MRCLASS = {03D45 (03C57)},
  MRNUMBER = {3631290},
MRREVIEWER = {Klaus\ Meer},
       DOI = {10.1017/jsl.2016.43},
       URL = {https://doi.org/10.1017/jsl.2016.43},
}

@article {AndrewsHTSchweber,
    AUTHOR = {Andrews, Uri and Harrison-Trainor, Matthew and Schweber, Noah},
     TITLE = {The property ``arithmetic-is-recursive'' on a cone},
   JOURNAL = {J. Math. Log.},
  FJOURNAL = {Journal of Mathematical Logic},
    VOLUME = {21},
      YEAR = {2021},
    NUMBER = {3},
     PAGES = {Paper No. 2150021, 18},
      ISSN = {0219-0613,1793-6691},
   MRCLASS = {03D45 (03C15 03C57)},
  MRNUMBER = {4330528},
MRREVIEWER = {Nikolay\ Bazhenov},
       DOI = {10.1142/S0219061321500215},
       URL = {https://doi.org/10.1142/S0219061321500215},
}

@article {VaughtMontalban,
    AUTHOR = {Montalb\'{a}n, Antonio},
     TITLE = {A computability theoretic equivalent to {V}aught's conjecture},
   JOURNAL = {Adv. Math.},
  FJOURNAL = {Advances in Mathematics},
    VOLUME = {235},
      YEAR = {2013},
     PAGES = {56--73},
      ISSN = {0001-8708,1090-2082},
   MRCLASS = {03C15 (03C57 03D28)},
  MRNUMBER = {3010050},
MRREVIEWER = {Wesley\ Calvert},
       DOI = {10.1016/j.aim.2012.11.012},
       URL = {https://doi.org/10.1016/j.aim.2012.11.012},
}

@Article{Bazhenov2024,
author={Bazhenov, Nikolay
and Kaloci{\'{n}}ski, Dariusz
and Wroc{\l}awski, Micha{\l}},
title={Degrees of relations on canonically ordered natural numbers and integers},
journal={Archive for Mathematical Logic},
year={2024},
abstract={We investigate the degree spectra of computable relations on canonically ordered natural numbers {\$}{\$}({\backslash}omega ,<){\$}{\$}and integers {\$}{\$}({\backslash}zeta ,<){\$}{\$}. As for {\$}{\$}({\backslash}omega ,<){\$}{\$}, we provide several criteria that fix the degree spectrum of a computable relation to all c.e. or to all {\$}{\$}{\backslash}Delta {\_}2{\$}{\$}degrees; this includes the complete characterization of the degree spectra of so-called computable block functions that have only finitely many types of blocks. Compared to Bazhenov et al. (in: LIPIcs, vol 219, pp 8:1--8:20, 2022), we obtain a more general solution to the problem regarding possible degree spectra on {\$}{\$}({\backslash}omega ,<){\$}{\$}, answering the question whether there are infinitely many such spectra. As for {\$}{\$}({\backslash}zeta ,<){\$}{\$}, we prove the following dichotomy result: given an arbitrary computable relation R on {\$}{\$}({\backslash}zeta ,<){\$}{\$}, its degree spectrum is either trivial or it contains all c.e. degrees. This result, and the proof techniques required to solve it, extend the analogous theorem for {\$}{\$}({\backslash}omega ,<){\$}{\$}obtained by Wright (Computability 7:349--365, 2018), and provide initial insight to Wright's question whether such a dichotomy holds on computable ill-founded linear orders. This article is an extended version of Bazhenov et al. (in: LIPIcs, vol 219, pp 8:1--8:20, 2022).},
issn={1432-0665},
doi={10.1007/s00153-024-00942-5},
url={https://doi.org/10.1007/s00153-024-00942-5}
}

@incollection {BNW22,
    AUTHOR = {Bazhenov, Nikolay and Kaloci\'{n}ski, Dariusz and Wroc{\l}awski, Micha{\l}},
     TITLE = {Intrinsic complexity of recursive functions on natural numbers
              with standard order},
 BOOKTITLE = {39th {I}nternational {S}ymposium on {T}heoretical {A}spects of
              {C}omputer {S}cience},
    SERIES = {LIPIcs. Leibniz Int. Proc. Inform.},
    VOLUME = {219},
     PAGES = {Art. No. 8, 20},
 PUBLISHER = {Schloss Dagstuhl. Leibniz-Zent. Inform., Wadern},
      YEAR = {2022},
      ISBN = {978-3-95977-222-8},
   MRCLASS = {03C57},
  MRNUMBER = {4405214},
}

@article {MW18,
    AUTHOR = {Wright, Matthew},
     TITLE = {Degrees of relations on ordinals},
   JOURNAL = {Computability},
  FJOURNAL = {Computability. The Journal of the Association CiE},
    VOLUME = {7},
      YEAR = {2018},
    NUMBER = {4},
     PAGES = {349--365},
      ISSN = {2211-3568,2211-3576},
   MRCLASS = {03C57 (03D45)},
  MRNUMBER = {3877172},
MRREVIEWER = {Luca\ San Mauro},
       DOI = {10.3233/com-180086},
       URL = {https://doi.org/10.3233/com-180086},
}
\bibliographystyle{alpha} 

\end{document}